\documentclass[12pt, reqno]{amsart}
\usepackage{amssymb}
\usepackage{epsfig}
\usepackage{graphicx}
\numberwithin{equation}{section}
\usepackage{cite}
\usepackage{fullpage}

\input xy
\xyoption{all}

\theoremstyle{plain}
\newtheorem{prop}{Proposition}[section]

\newtheorem{theo}[prop]{Theorem}
\newtheorem{coro}[prop]{Corollary}

\newtheorem{lemm}[prop]{Lemma}

\theoremstyle{definition}
\newtheorem{defi}[prop]{Definition}

\numberwithin{equation}{section}

\newcommand{\bA}{\mathbb A}

\newcommand{\bP}{\mathbb P}
\newcommand{\bQ}{\mathbb Q}
\newcommand{\bZ}{\mathbb Z}
\newcommand{\bR}{\mathbb R}

\newcommand{\sH}{\mathsf H}

\newcommand{\sZ}{\mathsf Z}

\newcommand{\ra}{\rightarrow}

\newcommand{\SL}{{\rm SL}}
\newcommand{\SO}{{\rm SO}}

\newcommand{\Moduli}{\overline{\mathcal{M}}}
\newcommand{\PGL}{{\rm PGL}}


\begin{document}
\title[$\textnormal{SL}_2$-orbit closures of binary forms]
{The distribution of $S$-integral points on $\textnormal{SL}_2$-orbit closures of binary forms}

\author{Sho Tanimoto}
\address{Department of Mathematics\\
Rice University, MS 136 \\
Houston, Texas  77251-1892 \\
USA}
\email{sho.tanimoto@rice.edu}
\author{James Tanis}
\address{Department of Mathematics\\
Rice University, MS 136 \\
Houston, Texas  77251-1892 \\
USA}
\email{jt13@rice.edu}

\date{\today}

\begin{abstract}
We study the distribution of $S$-integral points on $\SL_2$-orbit closures of binary forms and prove  an asymptotic formula for the number of $S$-integral points. This extends a result of Duke, Rudnick, and Sarnak. The main ingredients of the proof are the method of mixing developed by Eskin-McMullen and Benoist-Oh, Chambert-Loir-Tschinkel's study of asymptotic volume of height balls, and Hassett-Tschinkel's description of log resolutions of $\SL_2$-orbit closures of binary forms.
\end{abstract}

\maketitle


\section*{Introduction}
\label{sect:intro}

The distribution of integral points on homogeneous spaces has been studied by several researchers, and important developments are \cite{DRS93} and \cite{EM93}, which used different techniques to settle the problem of asymptotic formulae for the number of integral points on affine symmetric spaces. \cite{EM93} uses an ergodic theoretic approach based on {\it mixing} and this method is extended to the $S$-integral points setting in \cite{BO12}. On the other hand, another approach based on the height zeta functions method, has been developed in \cite{CT-additive}, \cite{CT-toric}, \cite{TBT11}. The advantage of this method is that one can analyze more general $(D, S)$-integral points while ergodic methods are only available when $D$ is the full boundary divisor. However, the height zeta functions method is also limited in that it is only applicable to bi-equivariant compactifications of connected linear algebraic groups.

In this paper, we study $(D, S)$-integral points on one sided equivariant compactifications of connected semisimple groups assuming that $D$ is the full boundary divisor.
Our method is a variant of the method of mixing in \cite{EM93} and \cite{BO12}.
To demonstrate our method, we solve the problem of counting $S$-integral points of bounded height on $\SL_2$-orbit closures of binary forms, which is considered in the integral case by Duke, Rudnick and Sarnak in \cite{DRS93}. 

Let us explain the problem in detail.
Let $V$ be a two-dimensional vector space over $\bQ$ with coordinates $x$ and $y$.
We consider the standard $\SL_2$ action on $V$.
Let $W_n = \mathrm{Sym}^n(V^*)$ be the space of binary forms of degree $n$,
$$
f = a_0x^n + a_1 x^{n-1}y + \cdots + a_n y^n,
$$
where $n \geq 3$.
Here the left action of $\SL_2$ on $W_n$ is given by
$$
(g \cdot f) \begin{pmatrix}
x \\ y
\end{pmatrix} = 
f \left( g^{-1}\begin{pmatrix}
x \\ y
\end{pmatrix}\right).
$$
Consider the projective space $\bP(W_n) \cong \bP^n$, and let $[f] \in \bP(W_n)(\bQ)$
be a binary form of degree $n$ with coefficients in $\bQ$ and with distinct roots.
We define $X_f$ to be the Zariski closure of the $\SL_2$-orbit of $[f]$, i.e, 
$$
X_f = \overline{\SL_2 \cdot [f]} \subset \bP(W_n).
$$
This is a projective threefold defined over $\bQ$.
The complement of the open orbit $U_f:= \SL_2 \cdot [f]$ in $X_f$ forms a geometrically irreducible divisor, and set theoretically it is an intersection of $X_f$ with the discriminant divisor in $\bP(W_n)$.(See \cite[Lemma 1.5]{MU83}.)
We denote this Weil divisor by $D_f$.
We fix the integral model $\bP_{\bZ} (W_n)$ of $\bP(W_n)$ as
$$
\bP_{\bZ} (W_n) := \mathrm{Proj}(\bZ [a_0, \cdots, a_n]),
$$
and let $\mathcal X_f$ and $\mathcal D_f$ be closures of $X_f$ and $D_f$ in $\bP_{\bZ} (W_n)$ respectively. They form flat integral models of $X_f$ and $D_f$ respectively, and we define 
$$
\mathcal U_f = \mathcal X_f \setminus \mathcal D_f.
$$
Let $S$ be a finite set of places including the archimedean place,
and we denote the ring of $S$-integers by $\bZ_S$.
One can consider the counting function of the number of $S$-integral points with respect to a height function $\mathsf H : \bP(W_n)(\bQ) \ra \bR_{>0}$ :
$$
N(\mathcal U_f, T) = \# \{ P \in \mathcal U_f(\bZ_S) \mid \mathsf H (P) \leq T\},
$$
where the height function $\mathsf H$ is introduced in Section \ref{sec:heights}.
In \cite{DRS93}, Duke, Rudnick, and Sarnak studied the asymptotic formula of this counting function when $S$ consists of the archimedean place:
\begin{theo}{\cite[Theorem 1.9]{DRS93}}
\label{theo:DRS}
When $S =\{ \infty\}$,
there exists a constant $c\geq 0$ such that 
$$
N(\mathcal U_f, T) \sim cT^{\frac{2}{n}}.
$$
\end{theo}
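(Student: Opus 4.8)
The plan is to reduce $N(\mathcal U_f,T)$ to a lattice-point count in $\SL_2(\bR)$ and then to combine an equidistribution statement for $\SL_2(\bZ)$ with an asymptotic for the volume of the relevant height balls. First, since $f$ has distinct roots and $n\geq 3$, the stabilizer $H$ of $[f]\in\bP(W_n)$ in $\SL_2$ is finite: an element of $H(\overline\bQ)$ induces a permutation of the $n\geq 3$ distinct roots of $f$ on $\bP^1$, and a M\"obius transformation realizing a prescribed permutation of three distinct points is unique, so $H$ is an \'etale finite group scheme and $U_f\cong\SL_2/H$. Since $\SL_2(\bZ)$ acts on $\bP_\bZ(W_n)$ through the representation $\mathrm{Sym}^n$, which takes values in $\SL_{n+1}(\bZ)$ and hence permutes the primitive integral points of $\mathcal U_f$, and since the Galois cohomology governing the $\SL_2(\bZ)$-orbits of a finite group scheme is finite, $\mathcal U_f(\bZ)$ is a disjoint union of finitely many $\SL_2(\bZ)$-orbits $\SL_2(\bZ)\cdot[f_1],\dots,\SL_2(\bZ)\cdot[f_r]$ with $[f_i]\in\mathcal U_f(\bZ)$; here $r$ may be $0$, in which case $N(\mathcal U_f,T)\equiv 0$ and the assertion holds with $c=0$. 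Writing $\Gamma_i:=\operatorname{Stab}_{\SL_2(\bZ)}([f_i])$, a finite group, one gets
$$
N(\mathcal U_f,T)\;=\;\sum_{i=1}^{r}\frac{1}{|\Gamma_i|}\,\#\bigl\{\gamma\in\SL_2(\bZ):\mathsf{H}(\gamma\cdot f_i)\leq T\bigr\}.
$$

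Next, for each $i$ I would analyze the region $B_T^{(i)}:=\{g\in\SL_2(\bR):\mathsf{H}(g\cdot f_i)\leq T\}$ using the Cartan decomposition $g=k_1a_tk_2$, $a_t=\operatorname{diag}(e^t,e^{-t})$, $t\geq 0$, under which the Haar measure has the form $dg=c\,\sinh(2t)\,dk_1\,dt\,dk_2\asymp e^{2t}\,dk_1\,dt\,dk_2$ as $t\to\infty$. Since $a_t\cdot f_i$ has coefficient vector $(e^{(2j-n)t}a_{i,j})_j$, for a generic $k_2\in K$ the $y^n$-coefficient of $k_2\cdot f_i$ is nonzero and $\mathsf{H}(k_1a_tk_2\cdot f_i)$ then grows like $e^{nt}$, the factors $k_1,k_2$ contributing only bounded perturbations; the $K$-directions along which the rate drops (these are exactly the directions in which $g\cdot[f_i]$ limits onto $D_{f_i}$) form a lower-dimensional locus. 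Hence $B_T^{(i)}$ is, up to lower-order contributions, $\{t\lesssim \tfrac1n\log T\}\times K\times K$, giving
$$
\operatorname{vol}(B_T^{(i)})\;\asymp\;\int_0^{\frac1n\log T}e^{2t}\,dt\;\asymp\;T^{2/n}.
$$
To upgrade this to an honest asymptotic $\operatorname{vol}(B_T^{(i)})\sim c_i'\,T^{2/n}$ with a definite constant, and to control the contribution of the lower-dimensional locus just mentioned, I would interpret $B_T^{(i)}$ as a height ball on the orbit closure $X_{f_i}$, pass to a log resolution of the pair $(X_{f_i},D_{f_i})$ — which is where Hassett--Tschinkel's explicit description of such resolutions is used — and apply Chambert-Loir--Tschinkel's analysis of the asymptotic volume of height balls, which produces the leading term as a sum of local integrals along the boundary components lying over $D_{f_i}$.

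Finally, since $\SL_2(\bZ)$ is a lattice in $\SL_2(\bR)$ and the family $B_T^{(i)}$ is well-rounded — i.e. $\operatorname{vol}(\partial_\varepsilon B_T^{(i)})=O(\varepsilon\operatorname{vol}(B_T^{(i)}))$ uniformly in $T$, which again follows from uniform control of $\mathsf H$ on the resolution — the equidistribution of $\SL_2(\bZ)$ in $\SL_2(\bR)$ supplied by the method of mixing (\cite{EM93}, cf.\ \cite{DRS93}) yields
$$
\#\bigl(\SL_2(\bZ)\cap B_T^{(i)}\bigr)\;\sim\;\frac{\operatorname{vol}\bigl(B_T^{(i)}\bigr)}{\operatorname{vol}\bigl(\SL_2(\bZ)\backslash\SL_2(\bR)\bigr)}.
$$
Combining this with the displayed expression for $N(\mathcal U_f,T)$ gives the asymptotic with $c=\operatorname{vol}(\SL_2(\bZ)\backslash\SL_2(\bR))^{-1}\sum_i c_i'/|\Gamma_i|\geq 0$. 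The step I expect to be the main obstacle is the volume asymptotic: the exponent $2/n$ is immediate from the Cartan coordinates, but identifying the constant $c_i'$ requires uniform understanding of $\mathsf H(g\cdot f_i)$ as $g$ moves in the directions along which $g\cdot[f_i]$ degenerates onto the singular divisor $D_{f_i}$, and it is precisely this that forces the use of an explicit log resolution together with the Chambert-Loir--Tschinkel volume estimates; the same uniform control is what underlies well-roundedness of $B_T^{(i)}$. (In the $S$-integral generalization pursued in this paper, the mixing arguments of Eskin--McMullen and Benoist--Oh play the role taken here by the classical equidistribution of $\SL_2(\bZ)$.)
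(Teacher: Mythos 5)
Your proposal is correct and follows essentially the same route as the paper's new proof of this theorem: reduce to finitely many $\SL_2(\bZ)$-orbits with finite stabilizers (Borel--Harish-Chandra), deduce well-roundedness of the height balls from the precise volume asymptotic, apply equidistribution via mixing to replace the lattice count by $V(T)/\operatorname{vol}(\SL_2(\bZ)\backslash\SL_2(\bR))$, and obtain $V(T)\sim c'T^{2/n}$ from the Hassett--Tschinkel log resolution together with the Chambert-Loir--Tschinkel height-zeta/Clemens-complex machinery. The only caveat worth flagging is that the paper's volume asymptotic (Corollary~\ref{coro:precise}) is established only for $n\geq 5$ with $f$ general or for $f$ with all roots rational, so this argument, like the paper's, recovers Duke--Rudnick--Sarnak's statement under those hypotheses rather than for every $f$ with distinct roots.
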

Duke, Rudnick, and Sarnak studied the counting problem of integral points on affine symmetric spaces(\cite[Theorem 1.2]{DRS93}) using techniques from automorphic forms. The above remarkable theorem is an example of an asymptotic formula for a non-symmetric space. Their method is based on equidistribution of lattice points in angular sectors on the hyperbolic plane and elementary approximation arguments using the polar decomposition of $\SL_2(\bR)$. In this paper, we give a new proof of Theorem~\ref{theo:DRS} and extend their result to any $S$:

\noindent
\begin{theo}
Suppose that $n \geq 5$ and $f$ is general so that the stabilizer of $[f]$ is trivial.
Then there exists a constant $c \geq 0$ such that 
$$
N(\mathcal U_f, T) \sim cT^{\frac{2}{n}} (\log T)^{\# S - 1}.
$$
\end{theo}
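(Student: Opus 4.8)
The plan is to reduce the counting of $S$-integral points to a lattice-point count in the $S$-adic group $G_S := \prod_{v \in S}\SL_2(\bQ_v)$, and then to combine the mixing method of Eskin--McMullen and Benoist--Oh with an asymptotic evaluation of the volumes of the associated height balls, the latter carried out on a log resolution of $(X_f, D_f)$ in the style of Chambert-Loir--Tschinkel. \emph{Step 1 (reduction to lattice counting).} Since $f$ is general, the stabilizer of $[f]$ in $\SL_2$ is trivial, so the orbit map $g \mapsto g\cdot[f]$ is an isomorphism $\SL_2 \cong U_f$ of $\bQ$-varieties. The group $\Gamma := \SL_2(\bZ_S)$, embedded diagonally, is an irreducible lattice in $G_S$ by strong approximation and reduction theory, the set $\mathcal U_f(\bZ_S)$ is $\Gamma$-stable, and it decomposes into finitely many $\Gamma$-orbits $\Gamma\cdot y_1, \dots, \Gamma\cdot y_k$, each in bijection with $\Gamma$ via $\gamma \mapsto \gamma\cdot y_i$ by triviality of the stabilizer. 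As the points are $S$-integral the local heights $\mathsf H_v$ are trivial for $v \notin S$, hence $\mathsf H(\gamma\cdot y_i) = \prod_{v\in S}\mathsf H_v(\gamma\cdot y_{i,v})$, where $y_{i,v}$ is the image of $y_i$ in $U_f(\bQ_v)$, and therefore
\[
N(\mathcal U_f,T) = \sum_{i=1}^k \#\bigl(\Gamma \cap B_T^{(i)}\bigr), \qquad B_T^{(i)} := \Bigl\{ (g_v)_{v\in S} \in G_S \;:\; \prod_{v\in S}\mathsf H_v\bigl(g_v\cdot y_{i,v}\bigr) \le T \Bigr\}.
\]

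\emph{Step 2 (volume asymptotics).} The analytic core is the asymptotics of $\Vol(B_T^{(i)})$, which I would approach through the local height integrals $Z_v^{(i)}(s) := \int_{\SL_2(\bQ_v)}\mathsf H_v(g_v\cdot y_{i,v})^{-s}\,dg_v$, so that $\int_{G_S}\mathsf H(g\cdot y_i)^{-s}\,dg = \prod_{v\in S}Z_v^{(i)}(s)$. Since $\mathsf H$ comes from the restriction of $\cO_{\bP(W_n)}(1)$ and the $S$-integral points accumulate only along $D_f$, I would evaluate each $Z_v^{(i)}$ by pulling back along a log resolution $\pi \colon \widetilde X_f \to X_f$ with $\pi^{-1}(D_f) = \sum_\alpha E_\alpha$ simple normal crossings, using Hassett--Tschinkel's explicit description of such resolutions. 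In $v$-adic coordinates adapted to $\sum_\alpha E_\alpha$, the geometric Igusa-integral estimates of Chambert-Loir--Tschinkel show that each $Z_v^{(i)}(s)$ continues meromorphically past its abscissa of convergence $a := \tfrac{2}{n}$, with a \emph{simple} pole at $s = a$ and positive leading coefficient $c_v^{(i)}$; the value $a = 2/n$ and the simplicity of the pole are extracted from the multiplicities of $\pi^*\cO(1)$ along the $E_\alpha$ and their discrepancies (this is where $n \ge 5$ and the triviality of the stabilizer enter, and is consistent with the absence of any logarithm in Theorem~\ref{theo:DRS}). Hence $\prod_{v\in S}Z_v^{(i)}(s)$ has a pole of order exactly $\#S$ at $s=a$, and a Tauberian theorem, together with Chambert-Loir--Tschinkel's control of $Z_v^{(i)}(s)$ in vertical strips, yields
\[
\Vol\bigl(B_T^{(i)}\bigr) \;\sim\; \frac{1}{a\,(\#S-1)!}\Bigl(\prod_{v\in S}c_v^{(i)}\Bigr)\, T^{2/n}(\log T)^{\#S-1}.
\]

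\emph{Step 3 (equidistribution and conclusion).} Continuity of the local metrics gives $u\cdot B_T^{(i)} \subseteq B_{C_uT}^{(i)}$ with $C_u \to 1$ as $u \to e$ in $G_S$, and the volume asymptotics of Step 2 force $\Vol(B_{C_uT}^{(i)})/\Vol(B_T^{(i)}) \to C_u^{a} \to 1$; thus $\{B_T^{(i)}\}$ is well-rounded. Since $\SL_2(\bQ_v)$ has no nontrivial compact quotients, the $G_S$-representation on $L^2(\Gamma\backslash G_S)$ is mixing, so the counting theorem of Eskin--McMullen in the $S$-arithmetic form of Benoist--Oh applies and gives $\#(\Gamma \cap B_T^{(i)}) \sim \Vol(B_T^{(i)})/\Vol(\Gamma\backslash G_S)$. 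Summing over $i$ proves the theorem with $c = \bigl(a\,(\#S-1)!\,\Vol(\Gamma\backslash G_S)\bigr)^{-1}\sum_{i}\prod_{v\in S}c_v^{(i)} \ge 0$, this constant being positive precisely when $\mathcal U_f(\bZ_S)\neq\emptyset$.

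\emph{Main obstacle.} The delicate point is Step 2: reading off from the Hassett--Tschinkel resolution the precise combinatorics of $\pi^*\cO(1)$ and the discrepancy divisor, verifying that the extremal ratio equals $2/n$ and is attained so that each local zeta function has only a simple pole there — so that the pole of the product is of order exactly $\#S$ and produces the clean power $(\log T)^{\#S-1}$ rather than a higher one — together with bounding $Z_v^{(i)}(s)$ in vertical strips well enough to legitimise the Tauberian step; verifying well-roundedness of $\{B_T^{(i)}\}$ relies on this same volume analysis.
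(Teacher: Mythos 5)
Your overall architecture --- reduce to a lattice count in $G_S=\prod_{v\in S}\SL_2(\bQ_v)$, compute the volume of the height balls via local height zeta functions on a Hassett--Tschinkel log resolution, and conclude by well-roundedness plus an ergodic counting theorem --- is the same as the paper's. But your Step 3 has a genuine gap. Well-roundedness together with mixing does \emph{not} by itself imply $\#(\Gamma\cap \mathsf B(T))\sim \Vol(\mathsf B(T))/\Vol(\Gamma\backslash G_S)$: a family of sets can be well-rounded and still fail to equidistribute in $\Gamma\backslash G_S$ (long thin boxes in $\bR^2$ against $\bZ^2$ already show this), and the counting theorems of Eskin--McMullen and Benoist--Oh that you invoke are proved for affine symmetric (or wavefront) homogeneous spaces $\mathcal H\backslash \mathcal G$, or for bi-$K$-invariant balls in the group case. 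The height balls here come from a \emph{one-sided} equivariant compactification and are covered by neither --- this is exactly the case the paper sets out to treat. The missing ingredient is that the local metrics are chosen so that each $\mathsf H_v$ is invariant under a compact subgroup: $\SO_2(\bR)$ at the archimedean place (this is why the binomial-weighted quadratic form appears in $\mathsf H_\infty$) and $\SL_2(\bZ_p)$ at finite places. Hence $\mathsf B(T)$ is a union of left cosets of the compact group $H=\prod_{p\in S_{\mathrm{fin}}}\SL_2(\bZ_p)\times\SO_2(\bR)$; since $G_S$ has the wavefront property in $H\backslash G_S$, mixing yields equidistribution of the translates $\Gamma_H\backslash Hg$, and unfolding $\int F_T\,\alpha$ along this $H$-foliation gives the weak convergence $F_T/V^*(T)\to 1$, which well-roundedness then upgrades to the pointwise count. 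Without noting the $H$-invariance of the height, your appeal to the existing counting theorems is unjustified.

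A second, smaller gap is in the Tauberian step. When $S$ contains a finite place $p$, the local factor $\sZ_p(s)$ is $\tfrac{2\pi i}{\log p}$-periodic, so the product $\prod_{v\in S}\sZ_v(s)$ has infinitely many poles on the line $\Re(s)=\tfrac{2}{n}$ itself and the standard Tauberian theorem does not apply; one needs the refined version (Chambert-Loir--Tschinkel, Theorem A.7), whose hypothesis is that $\log p/\log q$ is not a Liouville number for distinct primes $p,q$ --- a consequence of Baker's theorem. ``Control in vertical strips'' does not capture this obstruction, which sits on the critical line. The remainder of your Step 2 (abscissa $\tfrac{2}{n}$, attained by a single boundary divisor with a dense set of $\bQ_v$-points, hence simple local poles) matches the paper's computation of the coefficients of $\tfrac{2}{n}\varphi^*L+K_Z+M$ and the density of rational points on $A[n]$, except that the computation must be carried out on an $\SL_2$- rather than $\PGL_2$-compactification, i.e.\ on a double cover of $Y_\alpha$, and one must verify that this cover is ramified along the relevant boundary divisors.
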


Our proof is based on the method of mixing developed in \cite{EM93}, \cite{BO12}. In \cite{EM93}, Eskin and McMullen introduced axiomatic treatments of the counting problem and the method of mixing. Using mixing, they independently solved the question of distribution of integral points on affine symmetric spaces(\cite[Theorem 1.4]{EM93}). Benoist and Oh generalized this method to $S$-adic Lie group settings in \cite{BO12}, and solved the counting problem of $S$-integral points on affine symmetric spaces $H\backslash G$(\cite[Theorem 1.1, Corollary 1.2, Theorem 1.4]{BO12}). An important property used in their proofs is the wavefront property for symmetric spaces(\cite[Definition 3.1, Proposition 3.2]{BO12}). It is the key to establishing equidistribution of translations of $H$-orbits. We consider a special height function which is invariant under the action of a compact subgroup $H$ satisfying the wavefront property, and reduce the counting problem on $G$ to the counting problem on $H\backslash G$, where $G$ is a $S$-adic Lie group associated to $\SL_2$. This proves that the function $N(\mathcal U_f, T)$ is approximated by the asymptotic volume of height balls.

The computation of asymptotic volume of height balls is a subject of \cite{volume}.
Chambert-Loir and Tschinkel showed that the global geometric data, which is so-called the Clemens complex, controls the volume of height balls, and one can compute the asymptotic formula based on that. However, 
to use their machinery, we need to describe a log resolution of singularities for a pair $(X_f, D_f)$. This has been discussed in \cite{HT03}. The variety $X_f$ admits a moduli interpretation as a subvariety of a moduli space of stable maps, and Hassett and Tschinkel used this moduli interpretation to construct a log resolution of a pair $(X_f, D_f)$. We will recall their result and provide its refinement in Section~\ref{sec:moduli}. It is straightforward to generalize our method to arbitrary number fields, but we restrict ourselves over the field of rational numbers for notational reasons.

Let us outline the contents of the paper. In Section~\ref{sec:heights}, we define the height function $\mathsf H$ and discuss its basic properties. Then in Section~\ref{sec:methodofmixing} we explain the method of mixing and its application. In Section~\ref{sec:moduli}, we recall the construction of a log resolution of a pair $(X_f, D_f)$ in \cite{HT03} and explain how moduli spaces of stable curves and stable maps can be used to obtain a log resolution of $(X_f, D_f)$. In Section~\ref{sec:height balls}, we recall results of \cite{volume} and apply them to obtain asymptotic formulae. In Section~\ref{sect:generalizations}, we discuss some generalizations of results in Section~\ref{sec:methodofmixing}.

\

{\bf Acknowledgments.} 
The authors would like to thank Natalie Durgin, Brendan Hassett, Brian Lehmann, Yuri Tschinkel,
and Anthony V\'{a}rilly-Alvarado for useful discussions. The authors also would like to thank Shinya Koyama and Nobushige Kurokawa for teaching us the reference \cite{Baker} for Liouville numbers.

\section{Height functions}
\label{sec:heights}

In this section, we introduce a height function of $\mathcal O(1)$ on $\bP(W_n)$ to count $S$-integral points on $\mathcal U_f$. First let us recall some definitions regarding height functions in general.
\begin{defi}{\cite[Section 2.1.3]{volume}}
Let $F$ be a locally compact field and $X$ a smooth projective variety defined over $F$. One can consider $X(F)$ as a compact analytic manifold over $F$. Let $L$ be a line bundle on $X$. The $L(F)$ is endowed with the structure of the analytic line bundle on $X(F)$. A metric on $L(F)$ to be a collection of functions $L_P(F) \rightarrow \bR_+$ for all $P \in X(F)$, denoted by $l \mapsto \| l\|$, such that
\begin{itemize}
\item $\| \cdot \|$ is a norm on the $F$-vector space $ L_P(F)$;
\item for any open subset $U\subset X(F)$ and any non vanishing analytic section $\mathsf f \in \Gamma(U,  L(F))$, the function $U \ni P \mapsto \| \mathsf f(P) \|$ is smooth, i.e., it is locally constant if $F$ is non-archimedean, otherwise it is $C^\infty$.
\end{itemize}
\end{defi}

With metrizations, one can define local height functions:
\begin{defi}{\cite[Section 2.2.6]{volume}}
Let $F$ be a locally compact field and $X$ a smooth projective variety defined over $F$, $\mathcal L = (L, \| \cdot \|)$ a metrized line bundle on $X$, and a nonzero section $\mathsf f \in \Gamma(X, \mathcal L)$. Let $U$ be the complement of the support of $\mathsf f$. The local height function of $\mathcal L$ associated to $\mathsf f$ is given by
$$
\sH : U(F) \rightarrow \bR_+, \quad P \mapsto \| \mathsf f(P) \|^{-1}.
$$
\end{defi}
We define height functions of $\mathcal O(1)$ on $\bP(W_n)$.
For a nonarchimedean place $v$, we define a metrization on $\mathcal O(1)$ by requiring the following property: for any linear form $\mathsf f \in \Gamma(\mathcal O(1), \bP(W_n))$, we have
\[
\|\mathsf f\| (a_0, \cdots, a_n) = \frac{|\mathsf f(a_0, \cdots, a_n)|_v}{\max \{ |a_0|_v, \cdots, |a_n|_v \}}
\]
At the archimedean place, we define our metrization by
\[
\|\mathsf f\| (a_0, \cdots, a_n) = \frac{|\mathsf f(a_0, \cdots, a_n)|_v}{\sqrt{\sum_{i = 0}^n \binom{n}{i}^{-1} a_i^2}}
\]
For $v = p$ a prime, $\mathcal O(1)$ is endowed with the standard metric induced from the integral model $\bP_\bZ(W_n)$.(\cite[Section 2.3]{volume})

Let $D \subset \bP(W_n)$ be the discriminant divisor and $s_D$ the corresponding section of $\mathcal O(D)$. The section $s_D$ is a homogeneous polynomial of degree $2n-2$ with $\bZ$-coefficients. Let $S$ be a finite set of places including the archimedean place and $U = \bP(W_n) \setminus D$. For each $v \in S$, we define the local height $\mathsf H_v : U(\bQ_v) \ra \bR_{>0}$ associated to $\frac{1}{2n-2}D$ by
$$
\mathsf H_v(a_0 , \cdots , a_n) = \frac{ \max \{ |a_0|_v, \cdots, |a_n|_v \} }{ |s_D(a_0, \cdots, a_n)|_v^{\frac{1}{2n-2}}}
$$
when $v$ is a non-archimedean place, and
$$
\mathsf H_v(a_0 , \cdots , a_n) = \frac{ \sqrt{\sum_{i = 0}^n \binom{n}{i}^{-1} a_i^2}}{ |s_D(a_0, \cdots, a_n)|_{\infty}^{\frac{1}{2n-2}}}
$$
when $v$ is the archimedean place, where 
$$
[a_0x^n + a_1 x^{n-1}y + \cdots + a_ny^n] \in U(\bQ_v).
$$
The function $\mathsf H_v$
is the local height function of $\mathcal O(1)$ associated to $\frac{1}{2n-2}D$.

One important property of these local heights is that they are invariant under the action of a maximal compact subgroup:
\begin{lemm}
\label{lemm:invariant}
For $v = p$ a prime, $\mathsf H_p$ is invariant under the action of $\SL_2(\bZ_p)$.
For $v = \infty$, $\mathsf H_\infty$ is invariant under the action of $\mathrm{SO}_2(\bR)$.
\end{lemm}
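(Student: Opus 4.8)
The plan is to verify the invariance directly from the explicit formulas for $\mathsf H_v$, noting that both the numerator and denominator transform compatibly under the relevant compact group. Recall that $\mathsf H_v = \|\mathsf f\|^{-1}$ for the section $\mathsf f$ of $\mathcal O(1)$ built from $\frac{1}{2n-2}D$, so it suffices to check that the chosen metrization on $\mathcal O(1)$ at each place $v \in S$ is invariant under the maximal compact subgroup $K_v$ (equal to $\SL_2(\bZ_p)$ when $v = p$, and $\SO_2(\bR)$ when $v = \infty$), together with the fact that the discriminant section $s_D$ is $\SL_2$-semiinvariant, i.e. $s_D(g \cdot f) = \chi(g) s_D(f)$ for a character $\chi$ of $\SL_2$; since $\SL_2$ has no nontrivial characters, $\chi$ is trivial and $|s_D|_v$ is genuinely $\SL_2(\bQ_v)$-invariant. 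Thus the whole burden is on the numerator: $\max\{|a_0|_v, \dots, |a_n|_v\}$ in the nonarchimedean case and $\sqrt{\sum_i \binom{n}{i}^{-1} a_i^2}$ in the archimedean case.

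For $v = p$, the point is that the coordinate functions $a_0, \dots, a_n$ span, over $\bZ_p$, the lattice $\mathrm{Sym}^n$ of the standard lattice in $V^* \otimes \bZ_p$, and $\SL_2(\bZ_p)$ preserves this lattice; hence it permutes $\bZ_p$-bases and the quantity $\max_i |a_i|_v$ — which is just the sup-norm associated to that lattice — is unchanged. Concretely, for $g \in \SL_2(\bZ_p)$ the coefficients of $g \cdot f$ are $\bZ_p$-linear combinations of the $a_i$ with the inverse map also having $\bZ_p$ coefficients (since $\det g$ is a $p$-adic unit, in fact $1$), so $\max_i |a_i|_v$ is invariant. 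This is the standard fact that the $v$-adic metric on $\mathcal O(1)$ coming from an integral model is invariant under the integral points of any group acting on that model, and here $\bP_\bZ(W_n)$ carries the $\SL_2$-action.

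For $v = \infty$, the content is that the quadratic form $q(f) = \sum_{i=0}^n \binom{n}{i}^{-1} a_i^2$ on $W_n = \mathrm{Sym}^n(V^*)$ is the $n$-th symmetric power of the standard $\SO_2(\bR)$-invariant inner product on $V^*$, hence is itself $\SO_2(\bR)$-invariant. I would record this as a short linear-algebra lemma: equip $V^*$ with the inner product making $x, y$ orthonormal; then $\SO_2(\bR)$ acts by isometries, and on $\mathrm{Sym}^n(V^*)$ the induced inner product makes the monomials $x^{n-i}y^i$ orthogonal with $\langle x^{n-i}y^i, x^{n-i}y^i\rangle = \binom{n}{i}^{-1}$ (the standard apolar / Bombieri norm), so $\SO_2(\bR)$ acts by isometries on $\mathrm{Sym}^n(V^*)$ as well, which says exactly that $q(g \cdot f) = q(f)$ for $g \in \SO_2(\bR)$. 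Combined with $\SL_2(\bR)$-invariance of $|s_D|_\infty$, this gives $\SO_2(\bR)$-invariance of $\mathsf H_\infty$.

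The main obstacle — really the only nonroutine point — is pinning down the normalization of the archimedean norm: one must check that the weights $\binom{n}{i}^{-1}$ are precisely those for which the monomial basis is orthogonal under the symmetric-power inner product, so that $q$ is genuinely $\SO_2(\bR)$-invariant rather than merely invariant under the finite symmetries. This is a classical identity about the apolar inner product on binary forms; once it is in hand, everything else is bookkeeping with the $\SL_2$-semiinvariance of the discriminant and the lattice-preservation property at finite places.
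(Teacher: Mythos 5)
Your proof is correct. For the nonarchimedean places it is in substance the same argument as the paper's: the paper phrases it as ``$\SL_2(\bZ_p)$ acts on the $\bZ_p$-bundle $\mathcal L(\bZ_p)$ coming from the integral model, hence acts isometrically on $\mathcal L(\bQ_p)$,'' which is exactly your observation that $g$ and $g^{-1}$ both have $\bZ_p$-entries, so the coefficient lattice is preserved and $\max_i|a_i|_p$ is unchanged; combined with the $\SL_2$-invariance of $s_D$ (which the paper also uses, and which your ``no nontrivial characters of $\SL_2$'' shortcut establishes correctly), this gives the claim. Where you genuinely diverge is at the archimedean place: the paper gives no argument there and simply cites Section 4 of Duke--Rudnick--Sarnak, whereas you supply a self-contained proof by identifying $\sum_i \binom{n}{i}^{-1}a_i^2$ as the norm on $\mathrm{Sym}^n(V^*)$ induced from the standard $\SO_2(\bR)$-invariant inner product on $V^*$ (the apolar/Bombieri norm), so that $\SO_2(\bR)$ acts by isometries. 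That identification is correct --- polarizing $x^{n-i}y^i$ and computing its tensor norm gives exactly $\binom{n}{i}^{-1}$, and orthogonality of distinct monomials is immediate --- and it is precisely the reason the archimedean metric in Section~\ref{sec:heights} is defined with those particular weights. So your write-up proves slightly more than the paper records on the page, at the cost of having to verify the classical apolar identity; the paper's version buys brevity by outsourcing that computation to \cite{DRS93}.
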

\begin{proof}
Let $\mathcal L$ be the metrized line bundle associated to the invertible sheaf $\mathcal O(1)$ on $\bP(W_n)$.
First note that $s_D$ is $\SL_2$-invariant, i.e., for any $P \in \bP(W_n)(\bQ_v)$ and $g \in \SL_2(\bQ_v)$, we have
\[
g^{-1} \cdot (s_D( g\cdot P) ) = s_D(P),
\]
where the group $\SL_2$ acts on the line bundle $\mathcal L$ in a way that the action is compatible with the one on $\bP(W_n)$.
Suppose that the place $v = p$ is a non-archimedean place. The group scheme $\SL_2 / \mathrm{Spec}(\bZ_p)$ acts on $\bP_{\bZ_p}(W_n)$ as well as the line bundle $\mathcal L / \mathrm{Spec}(\bZ_p)$, hence $\SL_2(\bZ_p)$ acts on the $\bZ_p$-bundle $\mathcal L(\bZ_p)$.
This means that the action of $\SL_2(\bZ_p)$ on $\mathcal L (\bQ_p)$ is isometric.
Thus using the definition of the height function above, we obtain that for $P\in U(\bQ_p)$ and $g \in \SL_2(\bZ_p)$,
\[
\sH_p(gP) =\|s_D(gP) \|_p^{-1} = \|g^{-1}\cdot s_D(gP) \|_p^{-1} = \|s_D(P) \|_p^{-1} = \sH_p(P).
\]
For the archimedean place, see \cite[Section 4]{DRS93} where this special height function was used to reduce the counting problem on $\SL_2$ to the counting problem on the hyperbolic plane $\mathfrak H$.
\end{proof}

Define $\bA_S := \prod_{v\in S} \bQ_v$, and we call it the $S$-adic ring.
Let $U(\bA_S) = \prod_{v \in S} U(\bQ_v)$ be a $S$-adic manifold,
and we define the global height $\mathsf H : U(\bA_S) \ra \bR_{>0}$ as the product of local heights :
$$
\mathsf H(P) := \prod_{v \in S} \mathsf H_v(P_v),
$$
where $P = (P_v)_{v\in S} \in U(\bA_S)$.
This is a continuous function on $U(\bA_S)$.
A key property of height functions 
is that the set of $S$-integral points of bounded height is finite,
hence we can count the number of $S$-integral points of bounded height:
$$
N(\mathcal U_f, T) = \# \{ P \in \mathcal U_f(\bZ_S) \mid \mathsf H(P) \leq T \},
$$
for any binary form $f$ of degree $n \geq 3$ with distinct roots.
We are interested in studying the asymptotic behavior of $N(\mathcal U_f, T)$ as $T \ra \infty$.

\section{The method of mixing}
\label{sec:methodofmixing}
Let $\mathcal D$ be the integral model of the discriminant divisor in $\bP_\bZ(W_n)$
and $\mathcal U$ the complement of $\mathcal D$.
By a theorem of Borel, Harish-Chandra, there are only finitely many $\SL_2 (\bZ_S)$-orbits on $\mathcal U_f (\bZ_S)$.(See \cite[Theorem 6.9]{BHC62} and \cite[Theorem 5.8]{PR94}.)
Also, the stabilizer of $f$ is a finite group. Therefore, our problem concerning the asymptotic of $N(\mathcal U_f, T)$ can be reduced to evaluating the asymptotic of 
$$
N(f, T) = \#\{ g \in \SL_2(\bZ_S) \mid \mathsf H(g \cdot [f]) \leq T \},
$$
for any $f \in \mathcal U(\bZ_S)$.

To study this counting function, we will use the method of mixing
developed in \cite{EM93} and \cite{BO12} for symmetric varieties.
Specifically, \cite{EM93} used mixing and the so-called wavefront property to study 
the distribution of integral points for sufficiently "nice" sets, 
and \cite{BO12} developed this theory for $S$-integral points.

Let $G = \SL_2(\bA_S) = \prod_{v \in S} \SL_2(\bQ_v)$ be a $S$-adic Lie group
and $\Gamma = \SL_2(\bZ_S)$ be diagonally embedded in $G$.
Then $\Gamma$ is a lattice in $G$, i.e., $\Gamma$ is discrete in $G$ and $X := \Gamma\backslash G$ has finite volume with respect to the invariant measure $\mu_X$ on $X$.(See \cite[Theorem I.3.2.4]{Mar91}.)
For the group action of $G$, the mixing property is as follows:
\begin{theo}
\label{theo:mixing}
The action of $G$ on $X$ is mixing, i.e., for any $\alpha, \beta \in \mathsf L^2(X)$, we have
$$
\lim_{g \ra \infty} \int_X \alpha(xg) \beta(x) \, \mathrm d \mu_X(x)
= \frac{\int_X \alpha \, \mathrm d \mu_X \int_X \beta\, \mathrm d \mu_X}{\mu_X(X)}.
$$
\end{theo}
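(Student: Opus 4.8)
The plan is to derive mixing from the Howe--Moore vanishing theorem for matrix coefficients of unitary representations of almost simple groups over local fields, following the strategy of \cite{EM93} and \cite{BO12}. Let $\pi$ denote the right regular representation of $G$ on $\mathsf L^2(X)$ and set $\mathsf L^2_0(X) = \{\phi \in \mathsf L^2(X) \mid \int_X \phi\, \mathrm d\mu_X = 0\}$. Decomposing $\alpha$ and $\beta$ into their mean values and their projections $\alpha_0,\beta_0$ onto $\mathsf L^2_0(X)$, and using that $\mu_X$ is $G$-invariant (so $\int_X \alpha_0(xg)\,\mathrm d\mu_X = 0$ for every $g$), one finds
$$
\int_X \alpha(xg)\beta(x)\, \mathrm d\mu_X(x) = \int_X \alpha_0(xg)\beta_0(x)\, \mathrm d\mu_X(x) + \frac{\int_X \alpha\, \mathrm d\mu_X \int_X\beta\, \mathrm d\mu_X}{\mu_X(X)}.
$$
Hence the theorem is equivalent to the assertion that $\langle \pi(g)\alpha_0, \beta_0\rangle \to 0$ as $g \to \infty$ in $G$ for all $\alpha_0, \beta_0 \in \mathsf L^2_0(X)$, i.e.\ that the matrix coefficients of $\pi|_{\mathsf L^2_0(X)}$ vanish at infinity.

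The next step is to check that $\pi|_{\mathsf L^2_0(X)}$ has no nonzero vector fixed by any single factor $G_v := \SL_2(\bQ_v)$, $v \in S$. Fix $v_0 \in S$; I would use that $\Gamma = \SL_2(\bZ_S)$ is an \emph{irreducible} lattice in $G$, in the sense that its projection to $\prod_{v \in S \setminus \{v_0\}} G_v$ is dense. This density is precisely strong approximation for the simply connected group $\SL_2$. Given irreducibility, the noncompact subgroup $G_{v_0}$ acts ergodically on $X = \Gamma\backslash G$ by Moore's ergodicity theorem, so any $\phi \in \mathsf L^2(X)$ fixed by $G_{v_0}$ is constant, hence zero if it lies in $\mathsf L^2_0(X)$.

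Finally I would run the product form of the Howe--Moore argument. Write $\pi|_{\mathsf L^2_0(X)}$ as a direct integral of tensor products $\bigotimes_{v \in S} \pi_v$ of irreducible unitary representations $\pi_v$ of $G_v$; by the previous step each $\pi_v$ has no nonzero invariant vector, and since $\SL_2(\bQ_v)$ is almost simple over a local field, the Howe--Moore theorem gives that the matrix coefficients of $\pi_v$ vanish at infinity in $G_v$. On a pure tensor the matrix coefficient of $\bigotimes\pi_v$ factors as $\prod_{v\in S} \langle \pi_v(g_v)u_v, w_v\rangle$, with each factor bounded by $\|u_v\|\,\|w_v\|$; since $g \to \infty$ in $G = \prod_{v\in S} G_v$ forces some coordinate $g_v \to \infty$ in $G_v$ along every subnet, the product tends to $0$. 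Approximating arbitrary vectors by finite sums of pure tensors and using the uniform bound $|\langle\pi(g)\alpha_0,\beta_0\rangle| \leq \|\alpha_0\|\,\|\beta_0\|$ then yields the claim of the first paragraph. The only non-formal point, and the one I expect to be the main obstacle, is the second step: verifying irreducibility of $\SL_2(\bZ_S)$ via strong approximation and the ergodicity of $G_{v_0}$ via Moore's theorem; everything else is the standard Howe--Moore machinery, and one could instead simply quote the mixing statement for irreducible $S$-arithmetic quotients directly from \cite{EM93} or \cite{BO12}.
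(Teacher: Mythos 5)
Your proposal is correct and follows essentially the same route as the paper: the paper simply cites \cite[Proposition 2.4]{BO12} and verifies its two hypotheses (the Howe--Moore property for the $S$-adic group via \cite[Theorem 2.5]{BO12}, and irreducibility of $\Gamma$ via \cite[Lemma 9.4]{BO12}), which are exactly the two inputs your argument unpacks. Your version just supplies the internals of that cited proposition (reduction to matrix coefficients on $\mathsf L^2_0(X)$, irreducibility via strong approximation plus Moore ergodicity, and the product form of Howe--Moore), as you yourself note in your closing remark.
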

\begin{proof}
This is a consequence of \cite[Proposition 2.4]{BO12}.
To apply \cite[Proposition 2.4]{BO12}, one needs to check two conditions: (i) our $S$-adic Lie group $G$ satisfies the Howe-Moore property; (ii) our lattice $\Gamma$ is irreducible. 

The condition (i) is stated in \cite[Theorem 2.5]{BO12} which claims the Howe-Moore property for any $S$-adic Lie group associated to a semisimple group. 

To check the condition (ii), see \cite[Lemma 9.4]{BO12} whose assumptions are all satisfied by $\SL_2$.
\end{proof}

To count $S$-integral points, we consider the height balls:
$$
\mathsf B(T) = \{ g \in G \mid \mathsf H(g \cdot [f]) \leq T \}.
$$
We denote the volume of these height balls with respect to the Haar measure $\mu_G$ on $G$ by $V(T)$. Here we assume that $\mu_G = \mu_X$ holds locally. The asymptotic of this volume function will be studied in Section~\ref{sec:height balls}.
To apply results of \cite{BO12}, these height balls need to satisfy the following condition:
\begin{prop}
\label{prop:wellrounded}
For any $\epsilon > 0$, there exists a neighborhood $U$ of the identity $e$ in $G$ such that
$$
(1 - \epsilon) \mu_G (\cup_{g \in U} \mathsf B(T)g ) \leq V(T) \leq (1+\epsilon) \mu_G (\cap_{g \in U} \mathsf B(T)g),
$$
for all $T \gg 1$.
\end{prop}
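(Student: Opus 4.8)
The plan is to reduce the statement to a volume estimate on the quotient $Y:=K\backslash G$ by the maximal compact subgroup $K=\SO_2(\bR)\times\prod_{p}\SL_2(\bZ_p)$, and then to show that the boundary of $\mathsf B(T)$ carries a negligible proportion of the volume. By Lemma~\ref{lemm:invariant} the function $g\mapsto\mathsf H(g\cdot[f])$ on $G$ is left $K$-invariant; hence $\mathsf B(T)=K\mathsf B(T)$, and likewise $\cup_{g\in U}\mathsf B(T)g$ and $\cap_{g\in U}\mathsf B(T)g$ are left $K$-invariant, so (after normalizing $\mu_G$ so that $\mu_G(K)=1$) all three volumes can be computed on $Y$. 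Using the Cartan decomposition $\SL_2(\bQ_v)=K_vA_v^{+}K_v$, write a generic element of $G$ as $k_1ak_2$ with $a$ in the positive chamber $A^{+}=\prod_v A_v^{+}$; since $s_D$ is $\SL_2$-invariant and $\prod_{v\in S}|s_D(f)|_v=1$ for $f\in\mathcal U(\bZ_S)$, one gets $\mathsf H(k_1ak_2\cdot[f])=\prod_{v\in S}\|a_vk_2\cdot f\|_v$, a function of $(a,k_2)$ only, while the Haar density $d\mu_G=\varrho(a)\,dk_1\,da\,dk_2$ is independent of $k_1,k_2$. Because the invariant norms of Section~\ref{sec:heights} diagonalize $A_v$, for $k_2$ outside a proper closed ``degenerate'' locus $\mathcal B\subset K$ (the set where the top-weight coordinate of $k_2\cdot f$ vanishes at some place of $S$) one has $\|a_vk_2\cdot f\|_v\asymp\|a_v\|_{\mathrm{op},v}$ with uniform constants, whereas $\mathcal B$ has measure zero.

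Next I would establish a Lipschitz-type estimate off a neighborhood of $\mathcal B$. Fix $\epsilon>0$ and a neighborhood $\Omega\supset\mathcal B$ in $K$. For $h\in G$ whose $k_2$-component lies outside $\Omega$ one has $\|h_v\|_{\mathrm{op},v}\le c(\Omega)\,\mathsf H_v(h\cdot[f])$ at each $v\in S$, and writing $hg\cdot f=h\cdot f+h\cdot\bigl((g-e)\cdot f\bigr)$ and bounding $\|h\cdot((g-e)\cdot f)\|_v\le\|h_v\|_{\mathrm{op},v}\,\|(g-e)\cdot f\|_v$, one finds a neighborhood $U=U(\epsilon,\Omega)$ of $e$ in $G$ with
$$
(1-\epsilon)\,\mathsf H(h\cdot[f])\ \le\ \mathsf H(hg\cdot[f])\ \le\ (1+\epsilon)\,\mathsf H(h\cdot[f])
$$
for all $g\in U$ and all such $h$. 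Consequently, on the region of $G$ where the $k_2$-component avoids $\Omega$, we get $\mathsf B((1-\epsilon)T)\subseteq\cap_{g\in U}\mathsf B(T)g$ and $\cup_{g\in U}\mathsf B(T)g\subseteq\mathsf B((1+\epsilon)T)$.

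It then remains to control the region where the $k_2$-component lies in $\Omega$. Since $\varrho$ does not involve $k_2$ and $\mathcal B$ has measure zero, for fixed small $\Omega$ the set $\{h\in\mathsf B(T):k_2(h)\in\Omega\}$ is a thin ``cusp'' of $\mathsf B(T)$ of volume that is a small proportion of $V(T)$; combined with the regular variation of the volume function from Section~\ref{sec:height balls} (namely $V((1\pm\epsilon)T)\sim(1\pm\epsilon)^{2/n}V(T)$), the contribution of this region to each of $V(T)$, $\mu_G(\cap_{g\in U}\mathsf B(T)g)$ and $\mu_G(\cup_{g\in U}\mathsf B(T)g)$ is $O(\epsilon V(T))$ for $T\gg1$. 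Putting the two regions together and replacing $\epsilon$ by a fixed multiple of itself yields both inequalities.

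The main obstacle is precisely this last step. Along the degenerate locus the height grows at a strictly slower rate than generically, so although $\Omega$ contributes negligibly to $\mathsf B(T)$ itself, its thickenings $\Omega U$ can reach much further into the boundary of $\bP(W_n)$ and thereby acquire comparatively large volume; showing that they nonetheless contribute a negligible proportion of $V(T)\sim cT^{2/n}(\log T)^{\#S-1}$ requires the precise description of the growth of $\mathsf H$ along every boundary stratum supplied by the log resolution of $(X_f,D_f)$ in Section~\ref{sec:moduli} together with the volume asymptotics of Section~\ref{sec:height balls}. This is the concrete incarnation, for the compact subgroup $K$ and the discriminant-adapted height, of the wavefront-property arguments of \cite{EM93,BO12}.
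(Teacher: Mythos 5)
Your outline correctly locates the crux of the proposition, but it does not prove it, and you concede as much in your final paragraph: the assertion that the $U$-thickening of the portion of $\mathsf B(T)$ whose $k_2$-component lies in $\Omega$ contributes only $O(\epsilon V(T))$ is exactly the content of the left-hand inequality $(1-\epsilon)\mu_G(\cup_{g\in U}\mathsf B(T)g)\le V(T)$, and it is left unproved. This is not a deferrable technicality. Your own setup shows why: if $k_2$ lies on the degenerate locus $\mathcal B$ (the extreme-weight coefficient of $k_2\cdot f$ vanishes) and $a=\mathrm{diag}(t,t^{-1})$, then $\mathsf H(ak_2\cdot[f])\asymp t^{n-2}$ while for $g\in U$ the perturbed point satisfies $\mathsf H(ak_2g\cdot[f])\gtrsim t^{n}\,\|g-e\|$; taking $t$ up to $T^{1/(n-2)}$ shows that $\cup_{g\in U}\mathsf B(T)g$ is not contained in $\mathsf B(CT)$ for any constant $C$, so the overshoot must be controlled in measure, not by a set inclusion. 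Quantifying that overshoot (the Haar density in the Cartan coordinates restricted to $\{k_2\in\Omega\}$, integrated out to the larger radius that the degenerate directions permit, compared against $V(T)\sim cT^{2/n}(\log T)^{\#S-1}$) is the actual work, and the proposal stops exactly where that computation would have to begin. There is also a secondary loose end: your good/bad decomposition is applied to $h\in\mathsf B(T)$, but the sets in the proposition are unions and intersections of translates $\mathsf B(T)g$, so the bad set you must bound is $\{hg: h\in\mathsf B(T),\ k_2(h)\in\Omega,\ g\in U\}\setminus\mathsf B((1+\epsilon)T)$, which requires the same estimate.

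For comparison, the paper's proof is a short reduction in the opposite order: from the precise asymptotic of Corollary~\ref{coro:precise} it extracts, for any $\epsilon>0$, a $\delta>0$ with $V(T+\delta)/V(T)<(1-\epsilon)^{-1}$ and $V(T)/V(T-\delta)<1+\epsilon$ for $T\gg1$, and then chooses $U$ so that $\cup_{g\in U}\mathsf B(T)g\subset\mathsf B(T+\delta)$ and $\mathsf B(T-\delta)\subset\cap_{g\in U}\mathsf B(T)g$. So the paper relies on the regular variation of $V$ rather than on a Cartan-coordinate analysis; the uniform-in-$T$ inclusion it invokes is precisely the Lipschitz property whose failure near $\mathcal B$ you are (rightly) worried about, which is why the measure-theoretic repair you sketch is a reasonable strategy — but to count as a proof it must include the volume estimate on the degenerate region that you have only named as ``the main obstacle.''
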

\begin{proof}
The above condition is refereed as {\it well-roundedness} in \cite{EM93} and \cite{BO12}. This follows from the precise asymptotic formula in Corollary~\ref{coro:precise}. Indeed for any $\epsilon >0$, there exists $\delta >0$ such that 
$$
\frac{V(T+\delta)}{V(T)} < \frac{1}{1-\epsilon}, \quad \frac{V(T)}{V(T-\delta)} < 1+ \epsilon
$$
for sufficiently large $T \gg 1$. Now choose a neighborhood $U$ of $e$ so that
\[
\cup_{g \in U} \mathsf B(T)g \subset \mathsf B(T+\delta), \quad \mathsf B(T-\delta) \subset \cap_{g \in U} \mathsf B(T)g.
\]
Our assertion follows from this.
\end{proof}

Let $\chi_{\mathsf B(T)}$ be the characteristic function of the height ball $\mathsf B(T)$.
Consider the counting function:
$$
F_T(g) = \sum_{\gamma \in \Gamma} \chi_{\mathsf B(T)}(\gamma g) = \# \Gamma \cap (\mathsf B(T)g^{-1}).
$$
This defines a function on $X = \Gamma \backslash G$.
Our goal in this section is to prove the following theorem:
\begin{theo}
\label{theo:main_mixing}
Let $V^*(T) = V(T)/\mu_X(X)$.
Then we have point-wise convergence
$$
\frac{F_T(g_0)}{V^*(T)} \ra 1 \textnormal{\quad as $T\ra \infty$},
$$
for any $g_0 \in G$.
\end{theo}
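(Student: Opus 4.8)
The plan is to follow the standard mixing-to-counting dictionary of Eskin--McMullen and Benoist--Oh, exploiting the fact that our height $\mathsf{H}$ is invariant under the right action of the maximal compact subgroup $K = \mathrm{SO}_2(\bR) \times \prod_{p \in S \setminus \{\infty\}} \SL_2(\bZ_p)$ established in Lemma~\ref{lemm:invariant}. The key point is that, because $K$ is compact, the wavefront property for the pair $(G, K)$ is automatic (it is essentially the statement that for a compact subgroup, a neighborhood of $e$ can be chosen bi-$K$-invariant up to small error), so the equidistribution of translates $Kg_i$ that drives the argument in \cite{BO12} is available. First I would observe that $F_T$ descends to $X = \Gamma \backslash G$ and, since $\mathsf H(g\cdot[f])$ depends only on the coset $gK$ up to the action we care about, it suffices to understand $\langle F_T, \phi\rangle_{L^2(X)}$ for test functions $\phi$ and then unfold.

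The core computation is the unfolding identity
$$
\int_X F_T(g)\, \phi(g)\, \mathrm d\mu_X(g) = \int_{\mathsf B(T)} \tilde\phi(g)\, \mathrm d\mu_G(g),
$$
where $\tilde\phi$ is the $\Gamma$-periodization viewed back on $G$; combined with the mixing theorem (Theorem~\ref{theo:mixing}) applied to the function $g \mapsto \int_K \phi(kg)\,\mathrm dk$ against translates, one gets that for a fixed bump function $\phi$ supported near $g_0$ with total mass $1$,
$$
\frac{1}{V^*(T)}\int_X F_T(g)\,\phi(g)\,\mathrm d\mu_X(g) \longrightarrow 1 \quad \text{as } T \to \infty.
$$
This is the weak-convergence statement. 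To upgrade it to the pointwise convergence $F_T(g_0)/V^*(T) \to 1$ asserted in the theorem, I would sandwich: for any $\epsilon > 0$ choose a neighborhood $U$ of $e$ and use the well-roundedness of the height balls (Proposition~\ref{prop:wellrounded}) together with the inclusions $\mathsf B(T-\delta) \subset \cap_{g\in U}\mathsf B(T)g$ and $\cup_{g\in U}\mathsf B(T)g \subset \mathsf B(T+\delta)$ to bound $F_T(g_0)$ above and below by averages $\frac{1}{\mu_G(U)}\int_{g_0 U} F_{T\pm\delta}(g)\,\mathrm d\mu_G(g)$, each of which converges to $V^*(T\pm\delta)$ by the weak statement, and then use $V(T\pm\delta)/V(T) \to 1$ (again from Corollary~\ref{coro:precise}, via Proposition~\ref{prop:wellrounded}) to conclude.

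The main obstacle, and the place where care is genuinely required, is making the sandwich uniform as $T$ varies: the weak-convergence input is a limit in $T$ for each fixed test function, so to conclude the pointwise statement one must fix the neighborhood $U$ (hence the test function) \emph{first}, depending only on $\epsilon$, and only then let $T \to \infty$ — this is exactly what well-roundedness buys us, since it decouples the choice of $U$ from $T$. A secondary technical point is that the mixing theorem is stated for $L^2$ functions, but the periodization $\tilde\phi$ and the indicator-type functions involved need to be controlled in $L^2(X)$; here one uses that $X$ has finite volume and that the relevant functions are compactly supported or bounded, so they lie in $L^2(X)$, and one approximates the characteristic functions $\chi_{\mathsf B(T)}$ from inside and outside by continuous compactly supported functions in the usual way. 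Once these uniformity and integrability issues are handled, the argument is the standard one and the theorem follows.
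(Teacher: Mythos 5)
Your overall architecture coincides with the paper's: unfold $\int_X F_T\phi\,\mathrm d\mu_X$ to an integral over $G$, decompose along $H\backslash G$ using the $H$-invariance of the height from Lemma~\ref{lemm:invariant}, invoke equidistribution of translated $H$-orbits (Theorem~\ref{theo:equidistribution}, i.e.\ mixing plus the wavefront property) to obtain the weak convergence of Proposition~\ref{prop:weakconvergence}, and then upgrade to pointwise convergence by a sandwich based on the well-roundedness of Proposition~\ref{prop:wellrounded}; your remark that the neighborhood $U$ must be fixed, depending only on $\epsilon$, before letting $T\to\infty$ is exactly the point. Your sandwich via the averages $\mu_G(U)^{-1}\int_{g_0U}F_{T\pm\delta}\,\mathrm d\mu_G$ is a harmless variant of the paper's construction $\varphi_T^{\pm}(g)=\sup_{u\in U}\chi_{\mathsf B(T)}(gu^{-1})$, resp.\ $\inf_{u\in U}$ (note only that the weak statement is proved against continuous compactly supported test functions, so you should either smooth $\chi_{g_0U}$ or use the paper's $\varphi_T^{\pm}$ device, which are left-$H$-invariant and hence admissible).

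The one genuine flaw is your claim that the wavefront property of $G$ in $H\backslash G$ is ``automatic because $K$ is compact,'' being ``essentially the statement that a neighborhood of $e$ can be chosen bi-$K$-invariant up to small error.'' That is not what the property asserts: by Definition~\ref{defi:wavefront} one needs a fundamental set $F$ with $G=HF$ and, for every neighborhood $U$ of $e$, a neighborhood $V$ with $HVg\subset HgU$ \emph{uniformly for all} $g\in F$. The difficulty lives at infinity in $F$, not near $e$: conjugation by $g$ distorts neighborhoods unboundedly as $g\to\infty$, and compactness of $H$ alone does not control this. For the nonarchimedean factors the property is indeed immediate, but only because $\SL_2(\bZ_p)$ is \emph{open}, not because it is compact; for the archimedean factor one needs the nontrivial wavefront lemma of Eskin--McMullen for the Cartan decomposition $\SL_2(\bR)=KA^+K$, which is precisely what Lemma~\ref{lemm:wavefront} cites. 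With that citation substituted for your heuristic, the rest of your argument goes through and matches the paper's proof.
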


To prove this theorem, we consider a compact subgroup of $G$ satisfying the wavefront property as in \cite[Definition 3.1]{BO12}:
\begin{defi}
\label{defi:wavefront}
Let $\mathcal G$ be a locally compact group, and $\mathcal H$ a closed subgroup of $\mathcal G$. The group $\mathcal G$ has the wavefront property in $\mathcal H\backslash \mathcal G$ if there exists a Borel subset $F$ such that $\mathcal G = \mathcal HF$ and, for every neighborhood $U$ of the identity in $\mathcal G$, there exists a neighborhood $V$ of the identity such that
$$
\mathcal H V g \subset \mathcal HgU,
$$
for all $g \in F$.
\end{defi}

This property first appeared in \cite{EM93} to establish equidistribution of $\mathcal H$-orbits:

\begin{theo}{\cite[Theorem 4.1]{BO12}}
\label{theo:equidistribution}
Let $\mathcal G$ be a locally compact group, $\mathcal H \subset \mathcal G$ a closed subgroup, $\Gamma \subset \mathcal G$ a lattice such that $\Gamma_{\mathcal H} = \Gamma \cap \mathcal H$ is a lattice in $\mathcal H$. Set $\mathcal X := \Gamma \backslash \mathcal G$ and $\mathcal Y := \Gamma_{\mathcal H} \backslash \mathcal H$, and we denote the invariant measures on $\mathcal X$ and $\mathcal Y$ by $\mu_{\mathcal X}$ and $\mu_{\mathcal Y}$ respectively.

Suppose that the action of $\mathcal G$ on $\mathcal X$ is mixing and that $\mathcal G$ has the wavefront property on $\mathcal H \backslash \mathcal G$. Then the translates $\mathcal Yg$ become equidistributed in $\mathcal X$ as $g \rightarrow \infty$ in $\mathcal H \backslash \mathcal G$, i.e., for any $\psi \in C_c(\mathcal X)$, we have
\[
\frac{1}{\mu_{\mathcal Y}(\mathcal Y)} \int_{\mathcal Y} \psi (yg) \, \mathrm d \mu_{\mathcal Y}(y) \rightarrow \frac{1}{\mu_{\mathcal X}(\mathcal X)} \int_{\mathcal X}\psi \, \mathrm d \mu_{\mathcal X},
\]
as the image of $g$ in $\mathcal H \backslash \mathcal G$ leaves every compact subset.
\end{theo}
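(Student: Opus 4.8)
The plan is to run the Eskin--McMullen mechanism --- in the $S$-adic form of Benoist--Oh --- that turns the mixing of Theorem~\ref{theo:mixing} into equidistribution of translates. For $g\in\mathcal G$ let $m_g$ be the pushforward of $\mu_{\mathcal Y}$ under $\mathcal Y\ni\Gamma_{\mathcal H}h\mapsto\Gamma hg\in\mathcal X$, so that $\langle m_g,\psi\rangle=\int_{\mathcal Y}\psi(yg)\,\mathrm d\mu_{\mathcal Y}(y)$; the assertion is precisely that $m_g\to c\,\mu_{\mathcal X}$ in the weak-$*$ topology as $g\to\infty$ in $\mathcal H\backslash\mathcal G$, with $c=\mu_{\mathcal Y}(\mathcal Y)/\mu_{\mathcal X}(\mathcal X)$. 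Since $\mu_{\mathcal Y}$ is right $\mathcal H$-invariant (which forces $\mathcal H$ unimodular, as $\mathcal H$ carries the lattice $\Gamma_{\mathcal H}$), $m_g$ depends only on the image of $g$ in $\mathcal H\backslash\mathcal G$; and since $\mathcal G=\mathcal HF$ while a sequence in $F$ whose images escape to infinity in $\mathcal H\backslash\mathcal G$ can have no bounded subsequence in $\mathcal G$, it suffices to prove $\langle m_g,\psi\rangle\to c\int_{\mathcal X}\psi\,\mathrm d\mu_{\mathcal X}$ for each fixed $\psi\in C_c(\mathcal X)$ along $g\in F$ with $g\to\infty$ in $\mathcal G$.

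First I would build a ``spreading'' function on $\mathcal X$. Given a symmetric, relatively compact neighbourhood $\mathcal O$ of the identity, use the wavefront property to get a neighbourhood $\mathcal V$ with $\mathcal H\mathcal Vg\subseteq\mathcal Hg\mathcal O$ for all $g\in F$, and pick $\phi\in C_c(\mathcal G)$ with $\phi\geq0$, $\mathrm{supp}\,\phi\subseteq\mathcal V$ and $\int_{\mathcal G}\phi\,\mathrm d\mu_{\mathcal G}=1$. For $h\in\mathcal H$ put $\beta_h(\Gamma w):=\sum_{\gamma\in\Gamma}\phi(h^{-1}\gamma w)$: this descends to a compactly supported continuous function on $\mathcal X$, depends only on $\Gamma_{\mathcal H}h$, and unfolds to $\int_{\mathcal X}\beta_h\,\mathrm d\mu_{\mathcal X}=1$. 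Set $\rho_e:=\int_{\mathcal Y}\beta_h\,\mathrm d\mu_{\mathcal Y}(\Gamma_{\mathcal H}h)$, a nonnegative bounded function with $\int_{\mathcal X}\rho_e\,\mathrm d\mu_{\mathcal X}=\mu_{\mathcal Y}(\mathcal Y)$, hence $\rho_e\in\mathsf L^2(\mathcal X)$, and let $\rho_g(x):=\rho_e(xg^{-1})$. Unfolding together with the substitution $w=hvg$ (valid because $\mathcal G$, having a lattice, is unimodular) gives the exact identity
$$
\int_{\mathcal X}\psi'(x)\,\rho_g(x)\,\mathrm d\mu_{\mathcal X}(x)=\int_{\mathcal G}\phi(v)\Big(\int_{\mathcal Y}\psi'(\Gamma hvg)\,\mathrm d\mu_{\mathcal Y}(\Gamma_{\mathcal H}h)\Big)\mathrm d\mu_{\mathcal G}(v)
$$
for any bounded measurable $\psi'$ on $\mathcal X$.

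The crux --- and the step where the wavefront property is essential --- is the sandwich
$$
\int_{\mathcal X}\psi^{-}_{\mathcal O}\,\rho_g\,\mathrm d\mu_{\mathcal X}\ \leq\ \langle m_g,\psi\rangle\ \leq\ \int_{\mathcal X}\psi^{+}_{\mathcal O}\,\rho_g\,\mathrm d\mu_{\mathcal X}\qquad (g\in F),
$$
where $\psi^{+}_{\mathcal O}(x)=\sup_{r\in\mathcal O}\psi(xr)$ and $\psi^{-}_{\mathcal O}(x)=\inf_{r\in\mathcal O}\psi(xr)$ are the upper and lower envelopes of $\psi$ (bounded, Borel, compactly supported, hence in $\mathsf L^2(\mathcal X)$). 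To obtain it, fix $v\in\mathrm{supp}\,\phi$; specialising $\mathcal H\mathcal Vg\subseteq\mathcal Hg\mathcal O$ at $h=e$ gives $vg=h''gq''$ with $h''\in\mathcal H$ and $q''\in\mathcal O$ depending only on $v$, whence $hvg=(hh'')g\,q''$ for every $h\in\mathcal H$. Since $\mathcal O$ is symmetric and $q''\in\mathcal O$, this yields, pointwise in $h$, $\psi^{+}_{\mathcal O}(\Gamma hvg)\geq\psi(\Gamma (hh'')g)\geq\psi^{-}_{\mathcal O}(\Gamma hvg)$. Integrating over $\Gamma_{\mathcal H}h\in\mathcal Y$ and then changing variables by the $\mu_{\mathcal Y}$-preserving bijection $\Gamma_{\mathcal H}h\mapsto\Gamma_{\mathcal H}hh''$ of $\mathcal Y$ (here unimodularity of $\mathcal H$ enters) converts $\int_{\mathcal Y}\psi(\Gamma(hh'')g)\,\mathrm d\mu_{\mathcal Y}$ into $\langle m_g,\psi\rangle$, so $\int_{\mathcal Y}\psi^{+}_{\mathcal O}(\Gamma hvg)\,\mathrm d\mu_{\mathcal Y}\geq\langle m_g,\psi\rangle\geq\int_{\mathcal Y}\psi^{-}_{\mathcal O}(\Gamma hvg)\,\mathrm d\mu_{\mathcal Y}$ for each such $v$; averaging against $\phi(v)\,\mathrm d\mu_{\mathcal G}(v)$ and using the displayed identity with $\psi'=\psi^{\pm}_{\mathcal O}$ yields the sandwich. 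The whole point is that the wavefront property lets one trade the troublesome right translation by $v$ for a left translation by $\mathcal H$ plus a uniformly $\mathcal O$-small correction $q''$, uniformly over $F$; a naive thickening would conjugate $v$ by $g$ and lose all control. I expect this to be the main obstacle: finding the correct $\rho_e$ and recognising that the equivariant factorisation $hvg=(hh'')g\,q''$ makes the change of variables on $\mathcal Y$ exact rather than merely approximate.

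Finally, mixing closes the argument. By Theorem~\ref{theo:mixing} applied with $\alpha=\rho_e$ and $\beta=\psi^{\pm}_{\mathcal O}$ (both in $\mathsf L^2(\mathcal X)$), and since $g\to\infty$ forces $g^{-1}\to\infty$,
$$
\int_{\mathcal X}\psi^{\pm}_{\mathcal O}(x)\,\rho_e(xg^{-1})\,\mathrm d\mu_{\mathcal X}(x)\ \longrightarrow\ \frac{\mu_{\mathcal Y}(\mathcal Y)}{\mu_{\mathcal X}(\mathcal X)}\int_{\mathcal X}\psi^{\pm}_{\mathcal O}\,\mathrm d\mu_{\mathcal X}.
$$
Combined with the sandwich, $\limsup_{g\to\infty}\langle m_g,\psi\rangle\leq c\int_{\mathcal X}\psi^{+}_{\mathcal O}\,\mathrm d\mu_{\mathcal X}$ and $\liminf_{g\to\infty}\langle m_g,\psi\rangle\geq c\int_{\mathcal X}\psi^{-}_{\mathcal O}\,\mathrm d\mu_{\mathcal X}$; letting $\mathcal O$ shrink to $\{e\}$, uniform continuity of $\psi$ gives $\psi^{\pm}_{\mathcal O}\to\psi$ uniformly on a fixed compact set, so $\int_{\mathcal X}\psi^{\pm}_{\mathcal O}\,\mathrm d\mu_{\mathcal X}\to\int_{\mathcal X}\psi\,\mathrm d\mu_{\mathcal X}$, and therefore $\langle m_g,\psi\rangle\to c\int_{\mathcal X}\psi\,\mathrm d\mu_{\mathcal X}$. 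This is the claimed equidistribution after dividing by $\mu_{\mathcal Y}(\mathcal Y)$. (If $\mathcal Y$ is noncompact one must in addition control the contribution of the cusp to $\rho_e$ and to the orbit integrals, as in \cite{EM93} and \cite{BO12}; in our application $\mathcal H$ will be compact, so $\mathcal Y$ is compact and this point does not arise.)
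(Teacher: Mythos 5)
Your argument is correct, and it is essentially \emph{the} proof of the statement in question: the paper does not prove this theorem but quotes it as \cite[Theorem 4.1]{BO12}, and what you have written is a faithful reconstruction of the Eskin--McMullen/Benoist--Oh duality argument behind that citation (thickening of $\mathcal Y$ into a function $\rho_e$ on $\mathcal X$, the wavefront property to trade the right perturbation $v$ for a left $\mathcal H$-translation plus a uniformly small right correction, sandwiching by the envelopes $\psi^{\pm}_{\mathcal O}$, and then mixing). The one point you leave open --- that for noncompact $\mathcal Y$ one only knows $\rho_e\in\mathsf L^1(\mathcal X)$ rather than $\mathsf L^2$ --- is correctly flagged and is harmless: besides being moot in the paper's application (where $\mathcal H$ is compact), it is repaired in general by applying mixing to the truncation $\min(\rho_e,M)$, which is bounded and integrable hence square-integrable, at the cost of an error at most $\|\psi\|_\infty\int_{\mathcal X}(\rho_e-M)^{+}\,\mathrm d\mu_{\mathcal X}$, which tends to $0$ as $M\to\infty$.
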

Define a subgroup $H$ of $G$ by
$$
H = \prod_{p \in S_{\mathrm{fin}}} \SL_2(\bZ_p) \times \SO_2 (\bR),
$$
where $S_{\mathrm{fin}}= S \setminus \{\infty\}$.
Note that Lemma~\ref{lemm:invariant} implies that our height function $\mathsf H$ is invariant under the action of the subgroup $H$, i.e.,
$$
\mathsf H( hg \cdot [f] ) = \mathsf H(g \cdot [f])
$$
for $g \in G$ and $h \in H$.
Moreover, we have
\begin{lemm}
\label{lemm:wavefront}
The group $G$ has the wavefront property in $H\backslash G$.
\end{lemm}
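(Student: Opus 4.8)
The plan is to verify the wavefront property of Definition~\ref{defi:wavefront} for $\mathcal G = G = \prod_{v \in S} \SL_2(\bQ_v)$ and $\mathcal H = H = \prod_{p \in S_{\mathrm{fin}}} \SL_2(\bZ_p) \times \SO_2(\bR)$ by treating each place in $S$ separately and then assembling the factors, since $H$ is a product of the local subgroups $H_v$ (with $H_p = \SL_2(\bZ_p)$ and $H_\infty = \SO_2(\bR)$). For each $v$ I will produce a Borel section $F_v \subset \SL_2(\bQ_v)$ with $\SL_2(\bQ_v) = H_v F_v$ and the property that for every neighborhood $U_v$ of $e$ there is a neighborhood $V_v$ with $H_v V_v g \subset H_v g U_v$ for all $g \in F_v$; taking $F = \prod_v F_v$ and $V = \prod_v V_v$ then does the job.

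For the nonarchimedean places the key point is that $H_p = \SL_2(\bZ_p)$ is a maximal \emph{compact open} subgroup of $\SL_2(\bQ_p)$. By the Cartan (elementary divisor) decomposition, one may take $F_p$ to be a Borel set of coset representatives lying in $\{ \mathrm{diag}(p^k, p^{-k}) : k \ge 0 \}\cdot \SL_2(\bZ_p)$, but the essential observation is simply that $H_p$ is open: given any neighborhood $U_p$ of $e$, the set $V_p := U_p \cap H_p$ is still a neighborhood of $e$, and since $V_p \subset H_p$ we get $H_p V_p g = H_p g \subset H_p g U_p$ for \emph{every} $g \in \SL_2(\bQ_p)$, with no restriction to $F_p$ needed. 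So the nonarchimedean factors are essentially trivial and contribute no obstruction.

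The archimedean factor is the substance of the argument, and this is the step I expect to be the main obstacle. Here $H_\infty = \SO_2(\bR)$ is compact but not open, so one genuinely needs a section $F_\infty$. The natural choice is the one underlying the polar (Cartan/$KAK$) decomposition $\SL_2(\bR) = \SO_2(\bR)\cdot A^+ \cdot \SO_2(\bR)$, or equivalently the identification $\SO_2(\bR)\backslash \SL_2(\bR) \cong \mathfrak H$ with the hyperbolic plane: take $F_\infty$ to be a Borel lift of $\mathfrak H$, say the upper-triangular (Iwasawa $AN$) subgroup, or the set $A^+ \SO_2(\bR)$. The wavefront condition $\SO_2(\bR) V_\infty g \subset \SO_2(\bR) g U_\infty$ then says that conjugation is uniformly controlled along the section: one must show that for $g$ going to infinity in $\mathfrak H$, a fixed small perturbation on the left ($V_\infty$) can be absorbed into a fixed small perturbation on the right ($U_\infty$) \emph{modulo $\SO_2(\bR)$}. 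Concretely, writing $g = k a n$ in Iwasawa form with $a$ large, one checks that $k^{-1} V_\infty k$ stays in a fixed compact neighborhood (as $k \in \SO_2(\bR)$ is compact) and then that conjugating that neighborhood past $a$ shrinks it — the hyperbolic geometry fact that horoballs/geodesic rays contract transverse displacement. This is precisely the estimate carried out by Duke--Rudnick--Sarnak in \cite[Section 4]{DRS93} and axiomatized in \cite{EM93}; indeed \cite[Proposition 3.2]{BO12} establishes the wavefront property for all symmetric spaces, and $\SO_2(\bR)\backslash \SL_2(\bR)$ is the symmetric space of $\SL_2(\bR)$, so one may simply invoke that result for the archimedean place.

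Assembling: set $F = \prod_{p \in S_{\mathrm{fin}}} \SL_2(\bZ_p) \times F_\infty$, which is Borel and satisfies $G = HF$ since each local factor does. Given a neighborhood $U$ of $e$ in $G$, shrink it to a product neighborhood $\prod_v U_v$, choose $V_v$ at each place as above ($V_p = U_p \cap \SL_2(\bZ_p)$ for finite $p$, and $V_\infty$ from the symmetric-space wavefront property at $\infty$), and put $V = \prod_v V_v$. Then for $g = (g_v) \in F$ we have $H V g = \prod_v H_v V_v g_v \subset \prod_v H_v g_v U_v = H g U$, which is the desired inclusion. Hence $G$ has the wavefront property in $H \backslash G$. $\qed$
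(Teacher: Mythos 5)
Your argument is correct and follows essentially the same route as the paper: openness of $\SL_2(\bZ_p)$ makes the finite places trivial, the symmetric-space wavefront property from \cite{EM93} (equivalently \cite[Proposition 3.2]{BO12}) handles the archimedean place, and the factors are combined as a product (the paper simply cites \cite[Proposition 3.5]{BO12} for this last step rather than writing out the product neighborhoods). One small slip in your final assembly: with $F = \prod_{p} \SL_2(\bZ_p) \times F_\infty$ you do not get $G = HF$; take $F_p$ to be a genuine set of coset representatives for $H_p\backslash \SL_2(\bQ_p)$ --- or all of $\SL_2(\bQ_p)$, which is harmless since your inclusion holds for every $g_p$ --- as you in fact indicated earlier via the Cartan decomposition.
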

\begin{proof}
For $\SO(2)$, this property is established in \cite[Theorem 3.1]{EM93}.
It follows from the definition of the wavefront property that $\SL_2(\bQ_p)$ has the wavefront property in $\SL_2(\bZ_p)\backslash \SL_2(\bQ_p)$ because $\SL_2(\bZ_p)$ is open. Now our assertion follows from \cite[Proposition 3.5]{BO12} which claims that if each factor satisfies the wavefront property, then their product also satisfies the wavefront property.
\end{proof}
We consider any sequence of non-negative integrable functions $\varphi_n$ on $G$ satisfying
\begin{itemize}
\item
each $\varphi_n$ has a compact support,
\item
$\max_n \| \varphi_n \|_\infty < \infty$,
\item
$\lim_{n \ra \infty} \int_G \varphi_n \, \mathrm d \mu_G = \infty$,
\item
$\varphi_n(hg) = \varphi_n(g)$ for any $g \in G$ and $h \in H$.
\end{itemize}
The sequence $\{\chi_{\mathsf B(T_n)}\}$ is an example.
We define a function $F_{\varphi_n}$ on $X = \Gamma \backslash G$ by
$$
F_{\varphi_n}(x) := \sum_{\gamma \in \Gamma}\varphi_n(\gamma g),
$$
where $x = \Gamma g$.
Let $I_n := \int_G \varphi_n \, \mathrm d \mu_G/\mu_X(X)$.
The following proposition is essentially proved in \cite[Proposition 5.3]{BO12}.
The only difference is that we are not counting $S$-integral points on $H \backslash G$, but on $G$.
\begin{prop}
\label{prop:weakconvergence}
$F_{\varphi_n}/I_n$ converges weakly to 1 as $n \ra \infty$.
\end{prop}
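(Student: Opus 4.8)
The plan is to deduce Proposition~\ref{prop:weakconvergence} from the equidistribution statement in Theorem~\ref{theo:equidistribution} by an unfolding argument over the fibration $\mathcal Y = \Gamma_H\backslash H \to X = \Gamma\backslash G \to H\backslash G$, paralleling \cite[Proposition 5.3]{BO12} but keeping in mind that here we integrate the counting function against test functions on all of $X$ rather than on $H\backslash G$. First I would fix $\psi \in C_c(X)$ and compute $\int_X F_{\varphi_n}(x)\,\psi(x)\,\mathrm d\mu_X(x)$. Writing $x = \Gamma g$ and unfolding the sum over $\Gamma$, this becomes $\int_G \varphi_n(g)\,\psi(\Gamma g)\,\mathrm d\mu_G(g)$. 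Using that $\Gamma$ is a lattice in $G$ and that $\Gamma_H = \Gamma\cap H$ is a lattice in $H$ (which must be checked — $\SL_2(\mathbb{Z})$ sits in $\prod_p\SL_2(\mathbb{Z}_p)\times\SO_2(\mathbb{R})$, and one verifies this is a lattice in $H$, e.g. via compactness of $\SO_2(\mathbb{R})$ and the finite-volume of $\SL_2(\mathbb{Z})\backslash\SL_2(\mathbb{R})$ together with strong approximation), I would disintegrate $\mu_G$ along $H\backslash G$: choosing a Borel section, $\int_G = \int_{H\backslash G}\int_H$. Because $\varphi_n$ is left-$H$-invariant, $\varphi_n(g)$ descends to a function $\overline\varphi_n$ on $H\backslash G$, and the inner integral over $H$ is $\int_H \psi(\Gamma hg)\,\mathrm d\mu_H(h)$, which by a further unfolding over $\Gamma_H$ equals $\mu_{\Gamma_H}(\Gamma_H\backslash H)$ times the average $\frac{1}{\mu_{\mathcal Y}(\mathcal Y)}\int_{\mathcal Y}\psi(yg)\,\mathrm d\mu_{\mathcal Y}(y)$.

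Next I would invoke Theorem~\ref{theo:equidistribution}: its hypotheses (mixing, from Theorem~\ref{theo:mixing}; and the wavefront property, from Lemma~\ref{lemm:wavefront}) hold, so the translated averages $\frac{1}{\mu_{\mathcal Y}(\mathcal Y)}\int_{\mathcal Y}\psi(yg)\,\mathrm d\mu_{\mathcal Y}(y)$ converge to $\frac{1}{\mu_X(X)}\int_X\psi\,\mathrm d\mu_X$ as the image of $g$ in $H\backslash G$ leaves every compact set. The remaining task is to insert this limit inside the integral $\int_{H\backslash G}\overline\varphi_n(g)\,(\text{average})\,\mathrm d\mu_{H\backslash G}(g)$ and show that, after dividing by $I_n = \int_G\varphi_n\,\mathrm d\mu_G/\mu_X(X)$, the result tends to $\int_X\psi\,\mathrm d\mu_X$. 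The point is that the mass of $\overline\varphi_n$ escapes to infinity in $H\backslash G$ (since each $\varphi_n$ has compact support but $\int_G\varphi_n\to\infty$ with $\|\varphi_n\|_\infty$ uniformly bounded, the supports must exhaust $H\backslash G$ up to compact-mass negligible portions). Concretely I would split $H\backslash G$ into a large compact piece $K$ and its complement: on the complement the average is within $\epsilon$ of $\frac{1}{\mu_X(X)}\int_X\psi$, and the $\overline\varphi_n$-mass on $K$ is bounded (by $\|\varphi_n\|_\infty\,\mu_{H\backslash G}(K)\le C\,\mu_{H\backslash G}(K)$), hence negligible compared to $\int_G\varphi_n\to\infty$. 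Combining, $\int_X F_{\varphi_n}\psi\,\mathrm d\mu_X / I_n \to \int_X\psi\,\mathrm d\mu_X$, which is exactly weak convergence of $F_{\varphi_n}/I_n$ to $1$.

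The main obstacle, and the step requiring the most care, is the uniformity in the passage to the limit: Theorem~\ref{theo:equidistribution} only gives \emph{pointwise} convergence of the averages as $g\to\infty$ in $H\backslash G$, with no rate, so one cannot naively bound the error uniformly. The standard resolution (as in \cite[Proposition 5.3]{BO12}) is to exploit that $\psi$ ranges over $C_c(X)$ and use a dominated-convergence / Fatou argument together with the escape of mass: one shows the normalized measures $\overline\varphi_n\,\mathrm d\mu_{H\backslash G} / \int_G\varphi_n$ concentrate their mass off every compact set, so only the asymptotic value $\frac{1}{\mu_X(X)}\int_X\psi$ of the integrand contributes in the limit. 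One should also double-check the normalization bookkeeping between $\mu_G$, $\mu_X$, $\mu_H$, $\mu_{\mathcal Y}$ and the local-compatibility assumption $\mu_G=\mu_X$ stated before Proposition~\ref{prop:wellrounded}, since the factor $\mu_{\Gamma_H}(\Gamma_H\backslash H)$ that appears from the inner unfolding must cancel correctly against the normalization in Theorem~\ref{theo:equidistribution}. Aside from these measure-theoretic points, the argument is a routine adaptation of \cite[Proposition 5.3]{BO12}, the only genuine change being that the test function lives on $X$ rather than on the quotient $H\backslash G$, which is harmless since the unfolding over $\Gamma_H$ reduces it to an average over $\mathcal Y$ exactly as needed.
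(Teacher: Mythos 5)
Your proposal is correct and follows essentially the same route as the paper: unfold the sum over $\Gamma$, disintegrate $\mu_G$ along $H\backslash G$ using the left $H$-invariance of $\varphi_n$, unfold the inner integral over the finite group $\Gamma_H$, apply Theorem~\ref{theo:equidistribution} to the translated $\mathcal Y$-averages, and conclude via uniform boundedness of $\varphi_n$ together with $\int_G\varphi_n\to\infty$ (the paper compresses your explicit compact-exhaustion/escape-of-mass step into an appeal to dominated convergence, but the content is the same).
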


\begin{proof}
Let $\alpha \in C_c(X)$. We want to prove that
$$
\lim_{n \ra\infty} \frac{1}{I_n} \int_X F_{\varphi_n}(x) \alpha (x) \, \mathrm d \mu_X(x) = \int_X \alpha(x) \, \mathrm d \mu_X(x).
$$
Let $\mu_H$ and $\mu_{H \backslash G}$ be the Haar measure and the invariant measure on $H$ and $H \backslash G$ respectively such that $\mu_G = \mu_H \mu_{H \backslash G}$ and $\mu_H(H) = 1$.
\begin{align*}
\int_X F_{\varphi_n} \alpha \, \mathrm d \mu_X &= \int_{\Gamma \backslash G} \sum_{\gamma \in \Gamma} \varphi_n (\gamma g) \alpha (\Gamma g) \, \mathrm d \mu_X(\Gamma g)\\
&= \int_G \varphi_n(g) \alpha(g) \, \mathrm d \mu_G(g)\\
&= \int_{H \backslash G} \varphi_n(Hg) \int_H \alpha (hg) \, \mathrm d \mu_H(h)\, \mathrm d \mu_{H \backslash G}(Hg)\\
&= \# \Gamma_H \int_{H \backslash G} \varphi_n(Hg) \int_{\Gamma_H \backslash H} \alpha (\Gamma_H hg) \, \mathrm d \mu_H(h)\, \mathrm d \mu_{H \backslash G}(Hg),
\end{align*}
where $\Gamma_H = \Gamma \cap H$. Note that since $H$ is compact, $\Gamma_H$ is a finite group and a lattice in $H$. Since the action of $G$ on $X$ is mixing and $G$ has the wavefront property in $H \backslash G$, the translates $\Gamma_H \backslash H g$ become equidistributed in $X$ as $g \ra \infty$ in $H \backslash G$(Theorem~\ref{theo:equidistribution}).
This means that
$$
\int_{\Gamma_H \backslash H} \alpha (\Gamma_H hg) \, \mathrm d \mu_H(h) \ra \frac{\mu_H(H)}{\#\Gamma_H \mu_X(X) } \int_X \alpha \, \mathrm d \mu_X,
$$
as $g \ra \infty$.
Since $\varphi_n$ is uniformly bounded, our assertion follows from the dominated convergence theorem.
\end{proof}
Finally we prove Theorem \ref{theo:main_mixing}:
\begin{proof}
The following proof mirrors the proof of \cite[Proposition 6.2]{BO12}.
Fix $\epsilon > 0$. Let $U$ be a symmetric neighborhood of the identity $e$ such that
$$
(1 - \epsilon) \mu_G (\cup_{g \in U} \mathsf B(T)g ) \leq V(T) \leq (1+\epsilon) \mu_G (\cap_{g \in U} \mathsf B(T)g).
$$
The existence of such $U$ is guaranteed by Proposition~\ref{prop:wellrounded}.
We consider the functions $\varphi_T^{\pm}$ on $G$ given by
$$
\varphi_T^{+}(g) = \sup_{u \in U} \chi_{\mathsf B(T)} (gu^{-1}), \quad \varphi_T^{-}(g) = \inf_{u \in U} \chi_{\mathsf B(T)} (gu^{-1})
$$
We let $I_T^{\pm} = \int_G \varphi_T^{\pm} \, \mathrm d \mu_G / \mu_X(X)$.
Then we have
$$
(1-\epsilon) I_T^+ \leq V^*(T) \leq (1+\epsilon)I_T^-.
$$
On the other hand, we define
$$
F_T^{\pm}(g) = \sum_{\gamma \in \Gamma} \varphi_T^{\pm}(\gamma g).
$$
It is easy to verify that
$$
F_T^-(gu) \leq F_T(g) \leq F_T^+(gu),
$$
for all $g \in G$ and $u \in U$.
Pick a non-negative continuous function $\alpha$ on $X$ such that $\int_G \alpha \, \mathrm d \mu_G = 1$ and the support of $\alpha$ is included in $g_0U$.
Then we have
$$
\int_X \alpha F_T^- \, \mathrm d \mu_X \leq F_T(g_0) \leq \int_X \alpha F_T^+ \, \mathrm d \mu_X.
$$
Applying Proposition~\ref{prop:weakconvergence} to $\varphi_T^{\pm}$ we obtain that for $T \gg 1$
$$
(1-\epsilon)I_T^- \leq F_T(g_0) \leq (1+\epsilon)I_T^+.
$$
We can conclude that for $T \gg 1$
$$
\frac{1-\epsilon}{1+\epsilon} \leq \frac{F_T(g_0)}{V^*(T)} \leq \frac{1+\epsilon}{1-\epsilon}.
$$
Thus $F_T(g_0)/V^*(T)$ converges to 1.
\end{proof}

\section{Moduli interpretations}
\label{sec:moduli}

In \cite{HT03}, using moduli interpretations, Hassett and Tschinkel constructed a partial desingularization of a pair $(X_f, D_f)$ to give a geometric explanation for the result of Duke, Rudnick and Sarnak (Theorem~\ref{theo:DRS}). This is not exactly what we need, however their geometry is quite important for establishing the asymptotic formula for $V(T)$, the volume of height balls. In this section, we recall the geometry of $(X_f, D_f)$ described in \cite{HT03}, and then provide its refinement. We assume that the ground field is an algebraically closed field of characteristic zero throughout this section.

First let us recall some definitions of stable curves and stable maps. In this paper, a curve of genus zero is a connected projective curve $C$ such that (i) $C$ is the union of a finite number of $\bP^1$'s; (ii) each component of $C$ meets with other components transversally; (iii) $C$ is a tree of smooth rational curves, i.e., there are no loops of rational curves.  

A curve of genus zero with $n$ marked points is a curve $C$ of genus zero together with mutually distinct $n$ smooth points on $C$. A special point on a curve $C$ of genus zero with $n$ marked points $(p_1, \cdots, p_n)$ is either a marked point $p_i$ or an intersection of two irreducible components of $C$. A stable curve of genus zero with $n$ marked points is a curve of genus zero together with $n$ marked points
\[
(C, p_1, \cdots, p_n)
\]
such that each component has at least three special points. This condition is equivalent to say that the automorphism group of $(C, p_1, \cdots, p_n)$ is finite.

\begin{center}
\includegraphics[scale=0.12]{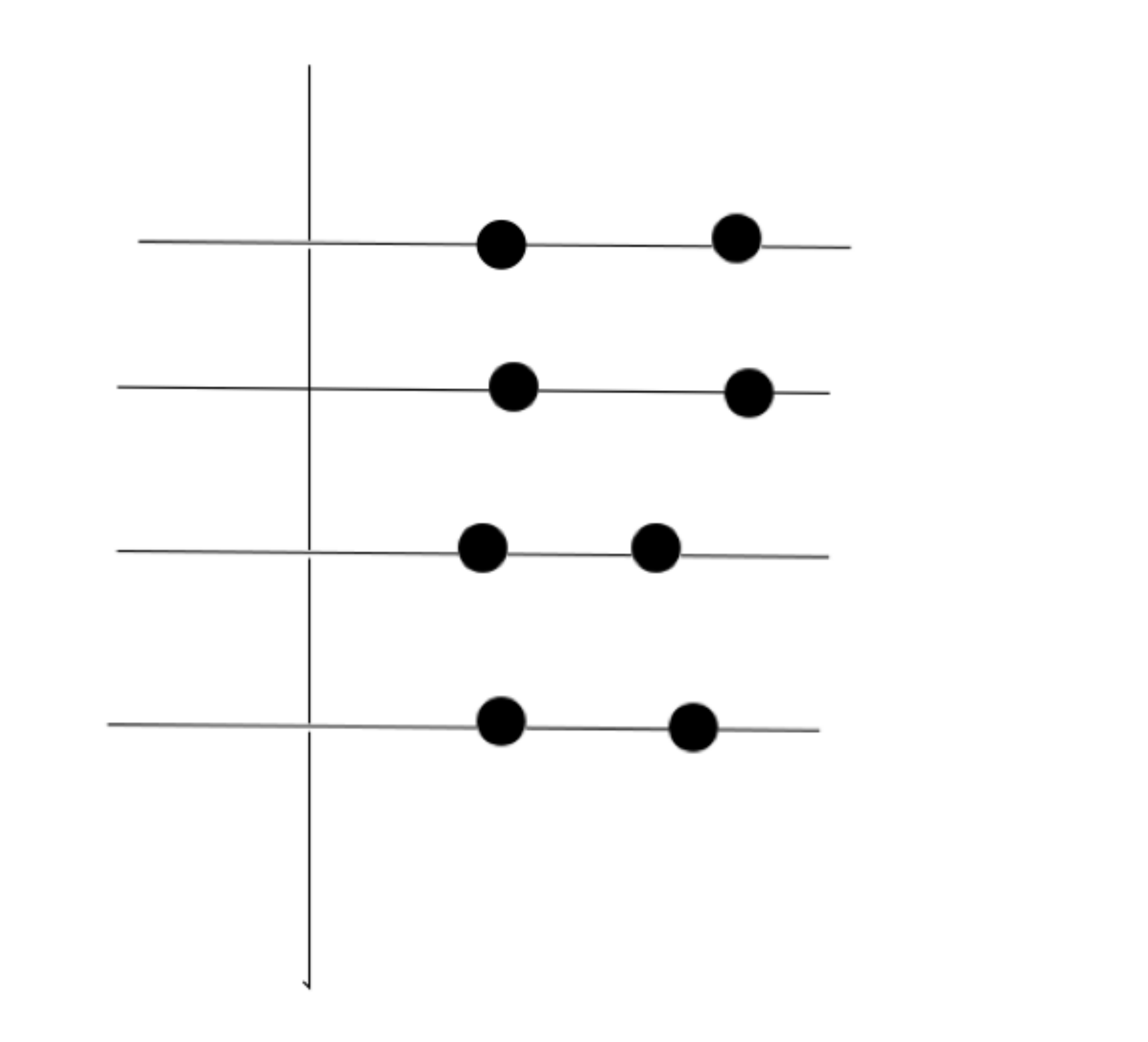}
\end{center}

A stable map of degree one from genus zero curves with $n$ marked points to $\bP^1$ is a tuple
\[
(C, p_1, \cdots, p_n, \mu : C \ra \bP^1).
\]
such that (1) $(C, p_1, \cdots, p_n)$ is a curve of genus zero with $n$ marked points; (ii) the morphism $\mu$ has degree one, i.e., all components except one component $L$ are collapsed by $\mu$ and the restriction of $\mu$ to $L$ is an isomorphism to $\bP^1$; (iii) all components except $L$ has at least three special points. Again this condition (iii) is equivalent to say that the automorphism group of $(C, p_1, \cdots, p_n, \mu)$ is finite.

\begin{center}
\includegraphics[scale=0.6]{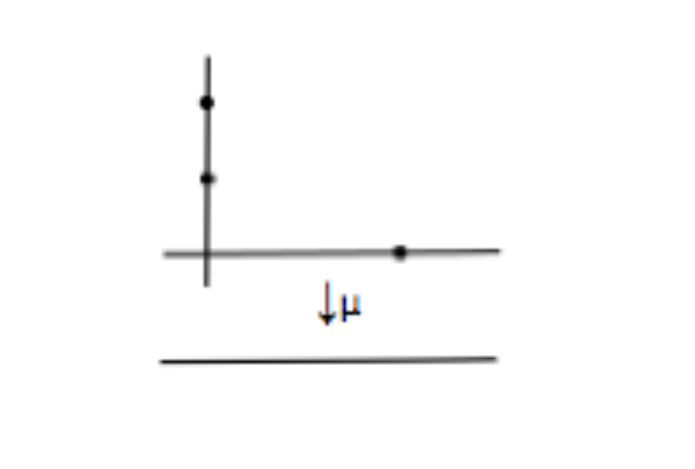}
\end{center}

Fix $n \geq 3$. Let $\Moduli_{0,n}$ be the Knudsen-Mumford moduli space of stable curves of genus zero with $n$ marked points. Let $\Moduli_{0,n}(\bP^1,1)$ denote the Kontsevich moduli space of stable maps of degree one from genus zero curves with $n$ marked points to $\bP^1$.
The action of $\SL_2$ on $\Moduli_{0,n}(\bP^1, 1)$ is defined by
$$
g \cdot (C, p_1, \cdots, p_n, \mu) = (C, p_1, \cdots, p_n, g\circ \mu).
$$
We consider the forgetting map
\begin{align*}
\psi : \Moduli_{0,n}(\bP^1, 1) & \ra \Moduli_{0,n} \\
(C, p_1, \cdots, p_n, \mu) & \mapsto (C', p_1, \cdots, p_n)
\end{align*}
where $C'$ is formed from $C$ by collapsing the irreducible components that are destabilized, i.e., irreducible components with at most two special points.

For each subset $S \subset N = \{1, \cdots, n\}$ such that $|S| \geq 2$, consider stable maps of degree one $\mu :C \ra \bP^1$ such that
\begin{itemize}
\item $C$ is the union of two $\bP^1$s,
\item marked points in $S$ are on the one component, and remaining marked points are on another component,
\item The component containing marked points in $S$ is collapsed by $\mu$.
\end{itemize}

\begin{center}
\includegraphics[scale=0.5]{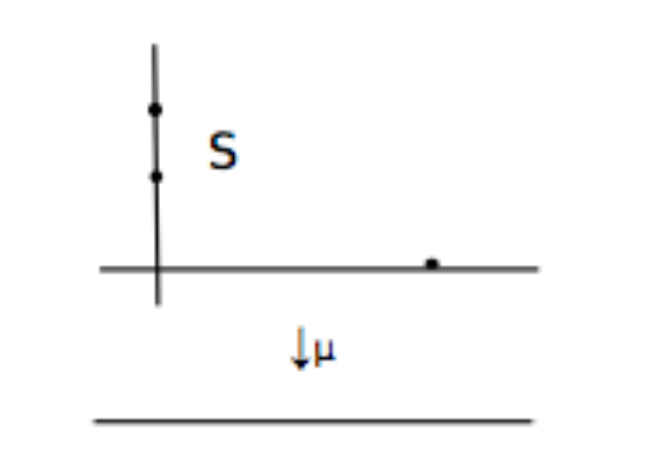}
\end{center}

The Zariski closure of the locus of such stable maps becomes an irreducible divisor $B_S\subset \Moduli_{0,n}(\bP^1,1)$. For $s = 2, \cdots n$, we define
$$
B[s] := \sum_{|S| = s} B_S, \quad B := \sum_{s = 2}^nB[s].
$$
\begin{theo}
\label{theo:moduli_smooth}
The moduli spaces $\Moduli_{0,n}$ and $\Moduli_{0,n}(\bP^1, 1)$ are smooth projective algebraic varieties, and the boundary divisor $B$ is a divisor with strict normal crossings.
\end{theo}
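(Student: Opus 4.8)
The statements for $\Moduli_{0,n}$ are classical: its smoothness and projectivity, and the fact that the boundary $\Moduli_{0,n}\setminus M_{0,n}$ is a strict normal crossings divisor with smooth irreducible components, are due to Knudsen and Keel, and I would simply quote them (see also \cite{HT03}). So the plan is to concentrate on $Y:=\Moduli_{0,n}(\bP^1,1)$. Its projectivity comes from the general construction of Kontsevich moduli spaces (Behrend--Manin, Fulton--Pandharipande), which I would take for granted.

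For smoothness I would argue in two steps. First, the moduli \emph{stack} of genus-zero stable maps of degree one to $\bP^1$ is smooth of the expected dimension $n$: for any such $\mu:C\ra\bP^1$ with $C$ a nodal curve of arithmetic genus zero, the obstruction space $H^1(C,\mu^*T_{\bP^1})$ vanishes because $\bP^1$ is convex and $C$ has genus zero, so the deformation problem is unobstructed. Second, a stable map $(C,p_1,\dots,p_n,\mu)$ of degree one has trivial automorphism group: an automorphism $\phi$ must preserve the unique component $L$ on which $\mu$ is non-constant, and since $\mu|_L$ is an isomorphism and $\mu\circ\phi=\mu$ this forces $\phi|_L=\mathrm{id}$; hence $\phi$ fixes every node on $L$ and preserves each maximal subtree $T\subset C$ attached to $L$, together with the point $q$ where $T$ meets $L$; but $(T,q,\text{marked points on }T)$ is a stable pointed rational curve (with at least three marked points), which has no non-trivial automorphisms, so $\phi|_T=\mathrm{id}$; descending the tree gives $\phi=\mathrm{id}$. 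Consequently $Y$ is a fine moduli space, and together with the first step it is a smooth projective variety.

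To see that $B$ is a strict normal crossings divisor I would work in the formal (or \'etale) neighbourhood of a point $[(C,p_\bullet,\mu)]\in Y$: among the local coordinates on the smooth variety $Y$ at this point there is, for each node $e$ of $C$, a distinguished one $t_e$ whose vanishing locus is exactly the locus on which the node $e$ remains unsmoothed, these $t_e$ being part of a regular system of parameters (the remaining directions deform the components with their special points and the map $\mu$, all unobstructed). A boundary divisor $B_S$ passes through the point precisely when $C$ has a node $e$ whose removal separates the marked points indexed by $S$ from those indexed by $N\setminus S$, and there $B_S=\{t_e=0\}$; hence distinct boundary divisors through the point are cut out by distinct members of the coordinate system and meet transversally, while $B_S\cap B_{S'}=\emptyset$ whenever $S$ and $S'$ cross (are neither nested nor disjoint). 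Each $B_S$ is itself smooth, being isomorphic to a product $\Moduli_{0,S\cup\{\star\}}\times\Moduli_{0,(N\setminus S)\cup\{\star\}}(\bP^1,1)$ of moduli spaces already known to be smooth. The main obstacle is making this local deformation-theoretic description rigorous — in particular, verifying that the node-smoothing parameters form an honest part of a coordinate system on $Y$ and that $B_S$ equals, and not merely contains, their common zero locus. An attractive alternative that avoids this is to identify $Y$ with the Fulton--MacPherson compactification $\bP^1[n]$ of the configuration space of $n$ points on $\bP^1$, the divisor $B_S$ corresponding to the boundary divisor labelled by $S$; Fulton and MacPherson prove directly that $\bP^1[n]$ is smooth and projective and that its boundary is a strict normal crossings divisor whose smooth irreducible components are indexed exactly by the subsets of $N$ of cardinality at least two.
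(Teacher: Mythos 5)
Your proposal is correct, but note that the paper offers no proof of this statement at all: it is recorded as a known fact, with the $\Moduli_{0,n}$ part due to Knudsen--Keel and the $\Moduli_{0,n}(\bP^1,1)$ part following from the general theory of Kontsevich moduli spaces for convex targets (Fulton--Pandharipande). Your argument supplies exactly the standard justification: convexity of $\bP^1$ kills the obstruction space in genus zero, and the degree-one condition forces trivial automorphisms (your induction down the trees attached to the non-collapsed component $L$ is the right argument), so the coarse space is a smooth projective fine moduli space; the normal-crossings statement for $B$ is then the usual node-smoothing coordinate description, which Fulton--Pandharipande make rigorous in precisely the way you indicate. Your alternative via the identification $\Moduli_{0,n}(\bP^1,1)\cong\bP^1[n]$ with the Fulton--MacPherson compactification is also a legitimate and clean route, and is in fact the point of view taken in the cited work of Hassett--Tschinkel. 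In short: no gap, and since the paper proves nothing here, your write-up is strictly more detailed than what it would be compared against.
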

The evaluation map $\mathrm{ev}$

\begin{align*}
\mathrm{ev} :\Moduli_{0,n}(\bP^1, 1) &\ra (\bP^1)^n\\
(C, p_1, \cdots, p_n, \mu) &\mapsto (\mu(p_1), \cdots, \mu(p_n))
\end{align*}
is an $\SL_2$-equivariant birational morphism. This is because for any $(p_1, \cdots, p_n) \in (\bP^1)^n$ such that $p_i$'s are mutually distinct, its preimage by $\mathrm{ev}$ is simply
\[
(\bP^1, p_1, \cdots, p_n, \mathrm{id})
\]
so $\mathrm{ev}$ is an isomorphism on this locus.
The divisor $B[2]$ is the proper transform of the big diagonal $\Delta$ and other boundary divisors $B[s]$($s \geq 3$) are contracted by the evaluation map $\mathrm{ev}$. The symmetric group $\mathfrak S_n$ acts on $\Moduli_{0,n}(\bP^1, 1)$ by permuting marked points. Note that the actions of $\SL_2$ and $\mathfrak S_n$ commute. We consider $\mathfrak S_n$-quotients of $\mathrm{ev}$ to obtain an $\SL_2$-equivariant birational map:
$$
\varrho : \tilde{X} = \Moduli_{0,n}(\bP^1, 1)/\mathfrak S_n \ra (\bP^1)^n/\mathfrak S_n \cong \bP(W_n)
$$
Let $\tilde{D}[s]$ be the image of $B[s]$. Then $\tilde{D}[2]$ is the proper transform of the discriminant divisor $D \subset \bP(W_n)$ and other boundary divisors $\tilde{D}[s]$($s\geq 3$) are contracted by $\varrho$. We write $\tilde{D}$ for $\sum_{s=2}^n \tilde{D}[s]$.

Let $f$ be a binary form of degree $n$ with distinct roots. As we defined in introduction, $X_f$ is the Zariski closure of the $\SL_2$-orbit of $f$ in $\bP(W_n)$. Let $\tilde{X}_f \subset \tilde{X}$ denote the strict transform of $X_f$, and define $\tilde{D}_f = \tilde{X}_f \cap \tilde{D}$. Here the intersection is the set theoretic intersection. A pair $(\tilde{X}_f, \tilde{D}_f)$ is a partial desingularization of $(X_f, D_f)$ constructed by Hassett and Tschinkel, and in particular $(\tilde{X}_f, \tilde{D}_f)$ is log canonical for a generic $f$(\cite[Proposition 2.8]{HT03}). Write $\alpha = (\alpha_1, \cdots, \alpha_n)$ for distinct roots of $f$, and let $C_\alpha \in \Moduli_{0,n}$ be the corresponding pointed rational curve
$$
C_\alpha = (\bP^1, \alpha_1, \cdots, \alpha_n).
$$
We define $Y_\alpha$ to be $\psi^{-1}(C_\alpha)$. Then $\tilde{X}_f$ is a finite quotient of $Y_\alpha$, and $Y_\alpha$ is an equivariant compactification of $\PGL_2$. More precisely, consider the stabilizer of $C_\alpha$ in the symmetric group $\mathfrak S_n$:
$$
\mathfrak H = \{ \sigma \in \mathfrak S_n \mid \sigma \cdot C_\alpha \cong C_\alpha \}.
$$
Then $\tilde{X}_f$ is isomorphic to the quotient $Y_\alpha / \mathfrak H$. When $n \geq 5$ and $f$ is general, the stabilizer $\mathfrak H$ is trivial. Hence for such $f$, we have $\tilde{X}_f \cong Y_\alpha$. Each $Y_\alpha$ only meets with $B[n]$ and $B[n-1]$. Indeed, for any stable map $C=(C, p_1, \cdots, p_n, \mu) \in B[s]$ such that $s \leq n-2$, the image of the forgetting map $\psi(C)$ has at least two irreducible components, and hence it cannot be equal to $C_\alpha$. The following theorem will be used later:
\begin{theo}{\cite[Proposition 2.6]{HT03}}
\label{theo:strictnormalcrossings}
$Y_\alpha$ is smooth and its boundary has strict normal crossings contained in $B[n-1]\cup B[n]$.
\end{theo}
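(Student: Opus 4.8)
The plan is to exploit that $Y_\alpha = \psi^{-1}(C_\alpha)$ is a fiber of the forgetting morphism $\psi \colon \Moduli_{0,n}(\bP^1,1) \to \Moduli_{0,n}$ over a point lying in the open locus $\Moduli_{0,n}^{\circ} \subseteq \Moduli_{0,n}$ of curves with pairwise distinct marked points: since $f$ has distinct roots, $C_\alpha \in \Moduli_{0,n}^{\circ}$. First I would show that $\psi$ is smooth over $\Moduli_{0,n}^{\circ}$, so that $Y_\alpha$ is smooth of dimension $\dim \Moduli_{0,n}(\bP^1,1) - \dim \Moduli_{0,n} = n - (n-3) = 3$; then show that the restriction of $\psi$ to every stratum of the normal crossings divisor $B$ is again smooth over $\Moduli_{0,n}^{\circ}$, which forces $Y_\alpha$ to be transverse to $B$ and hence $Y_\alpha \cap B$ to be a strict normal crossings divisor on $Y_\alpha$; and finally invoke the fact, already recorded in the text, that $Y_\alpha$ meets only $B[n-1]$ and $B[n]$.

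To prove $\psi$ smooth over $\Moduli_{0,n}^{\circ}$, note that over $\Moduli_{0,n}^{\circ}$ the complement of $B$ in $\Moduli_{0,n}(\bP^1,1)$ parametrizes $(\bP^1, p_1, \dots, p_n, \mu)$ with $\mu$ an isomorphism, and since $(\bP^1, p_1, \dots, p_n)$ has trivial automorphisms for $n \geq 3$, this locus is a torsor under $\mathrm{Aut}(\bP^1) = \PGL_2$ over $\Moduli_{0,n}^{\circ}$ (equivalently, it is the relative morphism scheme $\mathrm{Mor}_1(\cC/\Moduli_{0,n}^{\circ},\bP^1)$ for the universal curve $\cC$, which is smooth over the base by convexity of $\bP^1$, as $H^1(\bP^1, \mu^{*}T_{\bP^1}) = H^1(\bP^1, \cO(2)) = 0$), so in particular smooth. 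Over $\Moduli_{0,n}^{\circ}$ the only components of $B$ meeting a fiber are $B[n] = B_N$ and the $B_{N\setminus\{a\}}$ ($a \in N$), and these admit the clutching descriptions
$$
B_N \cong \Moduli_{0,n+1} \times \Moduli_{0,1}(\bP^1,1), \qquad B_{N\setminus\{a\}} \cong \Moduli_{0,n} \times \Moduli_{0,2}(\bP^1,1),
$$
obtained by detaching the collapsed $\bP^1$ from the non-collapsed one along their common node. Under these identifications $\psi$ does not involve the stable-map factor: $\psi|_{B_N}$ is the projection to $\Moduli_{0,n+1}$ followed by the point-forgetting map $\Moduli_{0,n+1} \to \Moduli_{0,n}$ (i.e.\ the universal curve, which is smooth over $\Moduli_{0,n}^{\circ}$), and $\psi|_{B_{N\setminus\{a\}}}$ is the projection to $\Moduli_{0,n}$ followed by a relabelling isomorphism; both are smooth over $\Moduli_{0,n}^{\circ}$, and since the ambient space is smooth it follows that $\psi$ itself is smooth at every point of $\psi^{-1}(\Moduli_{0,n}^{\circ})$.

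For the strict normal crossings property it suffices to show that the restriction of $\psi$ to every intersection $W = B_{S_1} \cap \dots \cap B_{S_k}$ of irreducible components of $B$ is smooth over $\Moduli_{0,n}^{\circ}$: granting this, $W \cap Y_\alpha = (\psi|_W)^{-1}(C_\alpha)$ is smooth of codimension $k$ in $Y_\alpha$ (using that $W$ has codimension $k$ in the ambient space since $B$ has strict normal crossings there), so in particular $W \cap Y_\alpha = \emptyset$ for $k \geq 4$ and $Y_\alpha \cap B$ is a strict normal crossings divisor on $Y_\alpha$. Each such $W$ is a product of moduli spaces of stable curves (one factor $\Moduli_{0,\bullet}$ for every collapsed component of the generic domain) together with one stable-map factor $\Moduli_{0,\bullet}(\bP^1,1)$ (for the non-collapsed component), glued by clutching morphisms; when $W$ meets $Y_\alpha$ the non-collapsed component has at most two special points and is contracted by $\psi$, so $\psi|_W$ is independent of the stable-map factor and decomposes as a composition of the projection forgetting that factor, point-forgetting maps $\Moduli_{0,m+1} \to \Moduli_{0,m}$, clutching morphisms, and relabelling isomorphisms. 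The point-forgetting maps are universal curves, hence smooth over $\Moduli_{0,m}^{\circ}$, and the rest are smooth, so $\psi|_W$ is smooth over $\Moduli_{0,n}^{\circ}$. Combined with the fact, established in the text, that $B[s] \cap Y_\alpha = \emptyset$ for $s \leq n-2$ (the stabilization of any point of $B[s]$ with $s \leq n-2$ has at least two components, hence cannot be $C_\alpha$), this shows that the boundary $Y_\alpha \cap B$ is a strict normal crossings divisor contained in $B[n-1]\cup B[n]$.

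The main obstacle is the combinatorial bookkeeping in the last step: one must enumerate the strata of $B$ that meet $Y_\alpha$ --- equivalently, the dual graphs of stable maps whose stabilization is the single vertex $C_\alpha$ --- and check in each case that the clutching presentation of the stratum is compatible with $\psi$, so that $\psi|_W$ genuinely factors as a composition of smooth morphisms over $\Moduli_{0,n}^{\circ}$. A secondary delicate point is that one needs $\psi$ to be smooth, not merely flat, along the boundary divisors $B_N$ and $B_{N\setminus\{a\}}$ themselves; this is exactly what the explicit clutching identifications supply, since under them $\psi$ becomes a universal curve (resp.\ an isomorphism) composed with a projection from a product.
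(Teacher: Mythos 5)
Your argument is correct, but it takes a genuinely different route from the one the paper relies on. The paper does not prove this statement itself: it cites \cite[Proposition 2.6]{HT03} and then recalls the mechanism of that proof, namely the explicit inductive description $Y_{\alpha_1,\alpha_2,\alpha_3}\cong\Moduli_{0,3}(\bP^1,1)=\mathrm{Bl}_{\Delta_{\mathrm{small}}}(\bP^1)^3$ followed by the chain of forgetting maps in which each $\phi_i$ blows up the proper transform of the smooth curve $\pi_1^{-1}(\alpha_i)\subset E$; smoothness and the normal crossings statement are read off from this iterated blow-up of smooth centers. You instead argue by transversality of the fiber $Y_\alpha=\psi^{-1}(C_\alpha)$ to the strict normal crossings divisor $B$: you show that $\psi$ and its restriction to every boundary stratum are smooth over the open locus $\Moduli_{0,n}^{\circ}$, using the clutching product decompositions under which $\psi$ factors through universal curves $\Moduli_{0,m+1}\to\Moduli_{0,m}$ and projections, so that each $W\cap Y_\alpha$ is smooth of the expected codimension. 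This is sound, and the combinatorial bookkeeping you flag as the main residual work is in fact very small: the only components of $B$ meeting $Y_\alpha$ are $B_N$ and the $B_{N\setminus\{a\}}$, and since $B_{N\setminus\{a\}}\cap B_{N\setminus\{b\}}=\emptyset$ for $a\neq b$ (the index sets are neither nested nor disjoint), the only deeper stratum to examine is $B_N\cap B_{N\setminus\{a\}}$. Your approach is more uniform and avoids the induction on the number of marked points, at the cost of the stratum-by-stratum smoothness check; the paper's (i.e.\ Hassett--Tschinkel's) approach is more explicit, and note that the explicit blow-up structure is needed later anyway, since the basis $E,F_1,\dots,F_n$ of $\Pic(Y_\alpha)_\bQ$, the formula for $K_{Y_\alpha}$, and Proposition~\ref{prop:coefficients} are all extracted from it, whereas your proof establishes the theorem without recovering that extra structure.
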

The proof of the above theorem gives us an explicit description of the $Y_\alpha$. When $n = 3$, then $Y_\alpha = \Moduli_{0,3}(\bP^1, 1)$. The evaluation map $\mathrm{ev} : \Moduli_{0,3}(\bP^1, 1) \ra (\bP^1)^3$ is the blow-up of $(\bP^1)^3$ along the small diagonal $\Delta_{\mathrm{small}}$, and $E:=B[3]$ is the exceptional divisor. Since the normal bundle $\mathcal N_{\Delta_{\mathrm{small}}}$ is isomorphic to $\mathcal O(2) \oplus \mathcal O(2)$, $E$ is isomorphic to $\bP^1 \times \bP^1$. Let
$$
\pi_1 : E \ra \bP^1,
$$
be the projection to the cross-ratio of marked points and the node, and let
$$
\pi_2 : E\ra \bP^1,
$$
be the projection to the image of marked points.

For arbitrary $n$, write $E$ for the intersection of $Y_\alpha$ and $B[n]$. This is the Zariski closure of the locus of stable maps $(C, p_1, \cdots, p_n, \mu)$ such that
\begin{itemize}
\item $C$ is the union of two $\bP^1$s,
\item every marked point is on the collapsed component,
\item $\psi(C, p_1, \cdots, p_n, \mu) \cong C_\alpha$.
\end{itemize}
We define $F_i$ for $i = 1, \cdots, n$ as the intersection of $Y_\alpha$ and $B_{N\setminus \{i\}}$. Then $F_i$ is the Zariski closure of the locus of stable maps $(C, p_1, \cdots, p_n, \mu)$ such that
\begin{itemize}
\item $C$ is the union of two $\bP^1$s,
\item every marked point except $i$-th marked point is on the collapsed component, and $i$-th marked point is on the component which is isomorphically mapped to $\bP^1$ by $\mu$,
\item $\psi(C, p_1, \cdots, p_n, \mu) \cong C_\alpha$.
\end{itemize}
Consider the sequence of the forgetting maps:
$$
Y_{\alpha_1, \cdots, \alpha_n} \ra Y_{\alpha_1, \cdots, \alpha_{n-1}} \ra \cdots \ra Y_{\alpha_1, \alpha_2, \alpha_3, \alpha_4} \ra Y_{\alpha_1, \alpha_2, \alpha_3} \cong \Moduli_{0,3}(\bP^1, 1).
$$
Each morphism is a $\SL_2$-equivariant birational morphism, and $\phi_i : Y_{\alpha_1, \cdots, \alpha_i} \ra Y_{\alpha_1, \cdots, \alpha_{i-1}}$ contracts a divisor $F_i$ onto $\bP^1$. Indeed, for $\mathrm{PGL}_2$-equivariant compactifications, the number of boundary components is equal to Picard rank of the underlying variety(\cite[Proposition 5.1]{HTT14}), so each birational morphism $\phi_i$ must be an extremal contraction. Moreover $F_i$ is contracted by $\phi_i$ because if we forget $\alpha_i$, then we cannot recover the value $\mu(\alpha_i)$. Hence $\phi_i$ is a divisorial contraction contracting $F_i$. On $Y_{\alpha_1, \alpha_2, \alpha_3}$, $E$ is identified with $B[3]$ and $F_1+F_2+F_3$ is identified with $B[2]$. Then each $\phi_i$ can be considered as the blow-up of the proper transform of $\pi_1^{-1}(\alpha_i)\subset B[3]$.

This blow-up description shows that $E$, $F_1, \cdots, F_n$ form a basis for $\mathrm{Pic}(Y_\alpha)_\bQ$, and the canonical bundle is equal to
$$
K_{Y_\alpha} = - \sum_{i = 1}^n F_i -2E.
$$
Consider the morphism
$$
\beta : Y_\alpha \subset \Moduli_{0,n}(\bP^1, 1) \ra \tilde{X} \ra \bP(W_n),
$$
and let $L := \beta^* \mathcal O_{\bP(W_n)}(1)$. We have
\begin{prop}{\cite[Lemma 3.3]{HT03}}
\label{prop:coefficients}
The pullback of the hyperplane class is of the form
$$
L = \frac{n-2}{2} \sum_{i = 1}^n F_i + \frac{n}{2}E.
$$
\end{prop}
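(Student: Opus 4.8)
The plan is to obtain the formula by pulling back the discriminant section along $\beta$ and reading off the coefficients from the orders of vanishing along the boundary of $Y_\alpha$. Recall that the discriminant divisor $D \subset \bP(W_n)$ is cut out by an $\SL_2$-invariant section $s_D \in \Gamma(\bP(W_n), \mathcal O(2n-2))$. Since $X_f = \overline{\SL_2 \cdot [f]}$ is not contained in $D$, the pullback $\beta^* s_D$ is a nonzero section of $\beta^* \mathcal O(2n-2) = (2n-2)L$, so its divisor of zeros is linearly equivalent to $(2n-2)L$. As $\beta$ is $\SL_2$-equivariant and $Y_\alpha \setminus (E \cup F_1 \cup \cdots \cup F_n)$ is a single $\SL_2$-orbit, mapped by $\beta$ into the open orbit $U_f \subset \bP(W_n) \setminus D$, this zero divisor is supported on the boundary; indeed $\{\beta^* s_D = 0\} = \beta^{-1}(D) = E \cup F_1 \cup \cdots \cup F_n$, so $(2n-2)L \sim m_E\, E + \sum_{i=1}^n m_i\, F_i$ in $\mathrm{Pic}(Y_\alpha)_\bQ$ for some positive integers $m_E, m_i$. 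By Theorem~\ref{theo:strictnormalcrossings} the boundary $E + \sum F_i$ has strict normal crossings, so each component is smooth, and $m_E$ (resp.\ $m_i$) is just the order of vanishing of $\beta^* s_D$ along any arc meeting that component transversally at a general point of it.

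I would then compute these multiplicities by degenerating roots. Near a general point of $E$, take the arc of binary forms with roots $q + t\alpha_1, \dots, q + t\alpha_n$: for $t \neq 0$ these have distinct roots and lie in the open orbit, and as $t \to 0$ all $n$ roots collide, so the limiting stable map bubbles off and lands on $E = Y_\alpha \cap B[n]$. Using the blow-up tower for $Y_\alpha$ recalled above, whose base $\Moduli_{0,3}(\bP^1,1)$ is the blow-up of $(\bP^1)^3$ along the small diagonal, one checks in local coordinates that this arc is transverse to $E$; along it $\beta^* s_D$ specializes to the discriminant $\prod_{i<j}\bigl((q+t\alpha_i)-(q+t\alpha_j)\bigr)^2 = t^{\,n(n-1)}\prod_{i<j}(\alpha_i-\alpha_j)^2$, i.e.\ a nonzero constant times $t^{\,n(n-1)}$, so $m_E = n(n-1)$. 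The computation of $m_i$ is the same with the $i$-th root held apart: along the arc with roots $\alpha_i$ and $q + t\alpha_j$ ($j \neq i$), which is transverse to $F_i = Y_\alpha \cap B_{N \setminus \{i\}}$, the discriminant specializes to a unit times $t^{\,2\binom{n-1}{2}} = t^{\,(n-1)(n-2)}$, so $m_i = (n-1)(n-2)$ for every $i$. Substituting and dividing by $2n-2 = 2(n-1)$ gives
$$
L = \frac{n-2}{2}\sum_{i=1}^n F_i + \frac{n}{2}\,E.
$$

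The step carrying the real content is the transversality claim used above: one must verify that these one-parameter families meet $E$ and $F_i$ transversally at a general point, so that $t$ faithfully computes the vanishing order. This is where the explicit geometry of $Y_\alpha$ recalled above enters — either via the blow-up charts of $Y_\alpha$ (for $n = 3$ one sees directly that in coordinates $(x_1, s, y_2)$ with $x_2 = x_1 + s y_2$, $x_3 = x_1 + s$, and $E = \{s = 0\}$, the discriminant is $s^6 y_2^2 (y_2 - 1)^2$), or via a deformation-theoretic identification of $t$ with the node-smoothing parameter, using the smoothness of the boundary in Theorem~\ref{theo:moduli_smooth}. Everything else — the factorization of $\mathrm{div}(\beta^* s_D)$ into boundary components, the shape of the discriminant in terms of the roots, and the final arithmetic — is routine. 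One can cross-check the outcome for $n = 3$, where $L = \mathrm{ev}^* \mathcal O(1,1,1)$ and $F_i$ is the proper transform of the diagonal $\{x_j = x_k\}$ with $\{j,k\} = \{1,2,3\} \setminus \{i\}$, so that $\mathrm{ev}^*\{x_j = x_k\} = F_i + E$ forces $\sum_i F_i = 2L - 3E$ and hence $L = \tfrac12 \sum_i F_i + \tfrac32 E$, in agreement with the formula.
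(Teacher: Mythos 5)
Your argument is correct and arrives at the right coefficients, but it takes a genuinely different route from the paper's. The paper works purely by intersection theory: using the $\mathfrak S_n$-symmetry to reduce to $L = a\sum_i F_i + bE$, it tests $L$ against two curves inside $E$ --- the proper transform $R$ of a general fiber of $\pi_2$, which is contracted by $\beta$ (so $L\cdot R=0$, and $F_i\cdot R=1$, $E\cdot R=2-n$ force $a:b=(n-2):n$), and the proper transform $C$ of a general fiber of $\pi_1$, which maps to a degree-$n$ curve (so $L\cdot C=n$, and $E\cdot C=2$, $F_i\cdot C=0$ fix the normalization $c=\tfrac12$). You instead compute the divisor of the pulled-back discriminant section $\beta^*s_D\in\Gamma((2n-2)L)$ by degenerating roots, obtaining multiplicities $n(n-1)$ along $E$ and $(n-1)(n-2)$ along each $F_i$, and divide by $2n-2$. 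Both are legitimate; your version needs no symmetry reduction and has the side benefit of producing the multiplicities of $\beta^*D$ along the boundary, which is exactly the data ($\lambda_\alpha$) that feeds into $a(L,\omega)$ in Section~5, whereas the paper's test-curve argument sidesteps all transversality questions because the projection formula and the blow-up description of $Y_\alpha$ do the work. The one step you leave at the level of assertion is precisely the one you flag: for general $n$ the claim that the lifted arcs meet $E$, resp.\ $F_i$, transversally at general points is not proved, and without it the arc computation only gives the upper bounds $m_E\le n(n-1)$, $m_i\le(n-1)(n-2)$, since $\mathrm{ord}_t(\gamma^*\beta^*s_D)=m\cdot\mathrm{ord}_t(\gamma^*(\text{local equation}))$. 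Your explicit $n=3$ chart together with the description of $Y_\alpha$ as an iterated blow-up of $\Moduli_{0,3}(\bP^1,1)$ along curves in $B[3]$ does suffice to close this (for general $q$ the limit points avoid the blow-up centers, so transversality can be checked downstairs), but that verification should be written out for the proof to be complete.
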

\begin{proof}
We include a proof for completeness.
Since $L$ is the pullback from a $\mathfrak S_n$-quotient of $\Moduli_{0,n}(\bP^1,1)$, $L$ must take the form
$$
L = a \sum_{i=1}^n F_i + bE.
$$
Note that the scheme-theoretic intersection $Y_\alpha \cap B_{N\setminus\{i\}}$ is reduced. Indeed, the divisor $B_{N\setminus\{i\}}$ is isomorphic to $\Moduli_{0,n} \times \Moduli_{0,2}(\bP^1,1)$, so the intersection $Y_\alpha \cap B_{N\setminus\{i\}}$ is isomorphic to $\Moduli_{0,2}(\bP^1,1) \cong \bP^1 \times \bP^1$. Let $R\subset E$ be the proper transform of the general fiber of $\pi_2 : E \ra \bP^1$. Then we have
$$
F_i . R = 1, \quad \text{and} \quad (E+F_4 + \cdots + F_n).R = -1.
$$
The second identity follows from \cite[Theorem 8.24(c)]{HarAG}. Hence we find $E.R = 2-n$. On the other hand, $R$ is contracted by $\beta$, so we have $L.R = 0$. Thus we conclude that $L$ is of the form
$$
L = c((n-2) \sum_{i=1}^n F_i + nE)
$$
Next, let $C\subset E$ be the proper transform of the general fiber of $\pi_1 : E \ra \bP^1$. The $\beta$ maps $C$ isomorphically onto its image and the image has degree $n$. Thus we obtain $L.C = n$. It is easy to see that $F_i.C = 0$. Moreover, since $\mathcal N_{\Delta_{\mathrm{small}}} \cong \mathcal O(2) \oplus \mathcal O(2)$, it follows from \cite[Proposition 7.12 and Theorem 8.24(c)]{HarAG} that 
$$
\mathcal O(E)|_C \cong \mathcal O(2).
$$
Therefore, we find $E.C =2$ and we conclude that $c = \frac{1}{2}$.
\end{proof}

Let $Z$ be any smooth equivariant compactification of $\SL_2$. The degree 2 map $\SL_2 \ra \rm{PGL}_2$ extends to a $\SL_2$-equivariant rational map $\varphi : Z \dashrightarrow Y_\alpha$. After applying $\SL_2$-equivariant resolution, if necessary, we may assume that $\varphi$ is an honest $\SL_2$-equivariant morphism and the boundary divisor of $Z$ is a divisor with strict normal crossings.

\begin{lemm}
\label{lemm:ramified}
$\varphi : Z \ra Y_\alpha$ is ramified along $E$ and $F_i$ for each $i$.
\end{lemm}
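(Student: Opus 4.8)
The plan is to reduce the assertion to a ramification computation in a quadratic extension of function fields. Since $\varphi$ extends the central isogeny $\SL_2\to\PGL_2$, a finite étale double cover with kernel the center $\{\pm I\}$, the morphism $\varphi$ is dominant, generically finite of degree two, and étale over the open orbit $\PGL_2\subset Y_\alpha$; hence its branch locus lies in the boundary $E\cup\bigcup_i F_i$, and $\varphi$ is ramified along a boundary prime divisor $D$ exactly when the divisorial valuation $v_D$ of $k(Y_\alpha)=k(\PGL_2)$ ramifies in the degree-two extension $k(Z)=k(\SL_2)$ over $k(\PGL_2)$, where $k$ is the ground field. I would then make this extension explicit: writing $\PGL_2=\mathrm{GL}_2/\mathbb{G}_m$ and picking any lift $\tilde g\in\mathrm{GL}_2\bigl(k(\PGL_2)\bigr)$ of the tautological point, the class of $\delta:=\det(\tilde g)$ in $k(\PGL_2)^{\times}/(k(\PGL_2)^{\times})^{2}$ is well defined (a different lift changes $\delta$ by a square) and $k(\SL_2)=k(\PGL_2)(\sqrt{\delta})$, since a point of $\PGL_2$ lifts to $\SL_2$ precisely when its determinant becomes a square. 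As the ground field is algebraically closed, $v_D$ ramifies if and only if $v_D(\delta)$ is odd, so it suffices to prove that $v_E(\delta)$ and each $v_{F_i}(\delta)$ are odd.

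The main step is to compute these valuations by restricting $\delta$ to carefully chosen one-parameter arcs in $\PGL_2$. For $E$ I would take $\gamma(t)=g_0\cdot\mathrm{diag}(t,1)\cdot g_1$ with $g_0,g_1\in\PGL_2$ generic: the limit at $t=0$ of the family of degree-one stable maps $(\bP^1,\alpha_1,\dots,\alpha_n,\gamma(t))$ is one whose domain collapses to a point with a bubble sprouting away from all the marked points and whose stabilized domain is again $C_\alpha$, so by the description of $E$ above $\gamma(0)$ lies on $E$ and on no other boundary component. Using $\mathrm{diag}(t,1)$ as its own $\mathrm{GL}_2$-lift, $\delta(\gamma(t))=c\,t$ for some $c\in k^{\times}$; since $\gamma(0)$ meets only $E$ among the boundary divisors and $\mathrm{div}_{\PGL_2}(\delta)$ is twice a divisor (the cover being unramified over $\PGL_2$), one gets $\mathrm{ord}_{t=0}\bigl(\delta(\gamma(t))\bigr)\equiv m\cdot v_E(\delta)\pmod 2$ for some $m\geq 1$; as the left side equals $1$, $v_E(\delta)$ is odd. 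For $F_i$ I would instead take $\gamma_i(t)=g_0\cdot\mathrm{diag}(t^{-1},1)\cdot g_1$ with $g_1$ chosen so that $g_1(\alpha_i)=\infty$ and $g_0$ generic: then the bubble forms at $\alpha_i$, the remaining $n-1$ marked points collide, $\gamma_i(0)$ lands on the dense orbit of $F_i$ and on no other boundary component, and $\delta(\gamma_i(t))=c'\,t^{-1}$, whence $v_{F_i}(\delta)$ is odd.

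The step I expect to be the main obstacle is the identification, for each arc, of the precise boundary stratum into which it degenerates: one must locate the bubble — the point at which the rational map $\bP^1\dashrightarrow\bP^1$ obtained as $t\to 0$ is undefined — determine which marked points migrate onto it, and match this with the moduli descriptions of $E$ and of $F_i$; once the strata are pinned down the determinant computations are immediate. As a geometric cross-check one can argue instead through the center: $\varphi$ is $\{\pm I\}$-invariant, hence factors as $Z\to Z/\{\pm I\}\to Y_\alpha$ with the second map birational and therefore an isomorphism in codimension one; a general point of $E$ has $\PGL_2$-stabilizer a Borel subgroup $B$ and a general point of $F_i$ a maximal torus $T$, and the preimages $\widetilde B,\widetilde T\subset\SL_2$ give central extensions of $B$ and $T$ by $\{\pm I\}$ which do not split, because on the torus factor each is the squaring isogeny $\mathbb{G}_m\xrightarrow{\,t\mapsto t^{2}\,}\mathbb{G}_m$, which has no algebraic section; consequently $-I$ lies in the $\SL_2$-stabilizer of every point of $Z$ over a general point of $E$ (respectively $F_i$), so $\varphi$ has a single point in those fibres and is ramified along $E$ and each $F_i$.
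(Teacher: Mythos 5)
Your main argument and the paper's proof rest on the same Kummer-theoretic core: $k(Z)=k(\SL_2)$ is a quadratic extension of $k(Y_\alpha)=k(\PGL_2)$, and $\varphi$ is ramified along a boundary divisor exactly when the element whose square root generates the extension has odd valuation there. Where you differ is in the implementation. The paper first reduces to $n=3$ via the forgetting maps (so that $E$ becomes the exceptional divisor of the blow-up of $(\bP^1)^3$ along the small diagonal), introduces explicit coordinates $x,y,z$, and exhibits $c^{-2}=-(x-y)(1+t^{-1})$ as a uniformizer of $E$ times a unit. You instead identify the generator intrinsically as $\delta=\det(\tilde g)$ for a lift $\tilde g\in\mathrm{GL}_2(k(\PGL_2))$ and compute valuations by pulling $\delta$ back along one-parameter arcs $g_0\,\mathrm{diag}(t,1)\,g_1$, using the parity argument ($\mathrm{ord}_{t=0}=1$ odd forces $v_E(\delta)$ odd, since interior components of $\mathrm{div}(\delta)$ contribute evenly and the limit point avoids the other boundary divisors) to sidestep any transversality verification. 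This buys uniformity: the same arc template handles $E$ and every $F_i$ for all $n$ at once, at the cost of having to identify the limiting stable map of each arc, which is the step you correctly flag. Your closing ``cross-check'' is in fact a genuinely different and arguably cleaner proof: since a general point of $E$ (resp.\ $F_i$) has $\PGL_2$-stabilizer a Borel (resp.\ a maximal torus), whose preimage in $\SL_2$ is connected, any point of $Z$ above it has stabilizer of finite index in that connected preimage, hence equal to it, hence containing $-I$; so the fibre is a single point and $\varphi$ is ramified. The paper does not use this route.

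One concrete slip to fix in the $F_i$ computation: with $\gamma_i(t)=g_0\,\mathrm{diag}(t^{-1},1)\,g_1=g_0\,\mathrm{diag}(1,t)\,g_1$ in $\PGL_2$, the indeterminacy point of the limit (hence the point where the bubble sprouts) is $g_1^{-1}\bigl((0{:}1)\bigr)$, not $g_1^{-1}\bigl((1{:}0)\bigr)$; so you must normalize $g_1(\alpha_i)=0$ rather than $\infty$ (or swap the diagonal entries). As written, your arc for $F_i$ degenerates into $E$. This is a convention-level error, not a gap in the method. Also, ``birational and therefore an isomorphism in codimension one'' should be phrased on the target: $Z/\{\pm I\}\to Y_\alpha$ is proper birational onto a normal variety, hence an isomorphism over the complement of a codimension-two subset of $Y_\alpha$, which is what you need at the generic points of $E$ and $F_i$.
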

\begin{proof}
We prove that $E$ is ramified. Recall that the forgetting map $Y_\alpha \ra Y_{\alpha_1, \alpha_2, \alpha_3} = \Moduli_{0,3}(\bP^1,1)$ is a $\SL_2$-equivariant birational morphism and $E$ is identified with $B[3]$. Moreover, the evaluation map $\mathrm{ev} : \Moduli_{0.3}(\bP^1,1) \ra (\bP^1)^3$ is the blow-up along the small diagonal $\Delta_{\rm{small}}$ and $E$ is the exceptional divisor of this blow up. Thus, we only need to consider $Z \ra \Moduli_{0,3}(\bP^1,1)$ and prove that the exceptional divisor $B[3]$ is ramified. After changing a base point, if necessary, we may assume that the map $\SL_2 \ra (\bP^1)^3$ is given by
$$
\SL_2 \ni g \mapsto \left(g\begin{pmatrix} 1\\0\end{pmatrix}, g\begin{pmatrix} 0\\1\end{pmatrix}, g\begin{pmatrix} 1\\1\end{pmatrix}\right) \in (\bP^1)^3.
$$
The function field of $Z$ is $k(a,b,c,d)$ where $k$ is the ground field and $a,b,c,d$ satisfy $ad-bc = 1$. Then the function field of $\rm{PGL}_2$ is a subfield of $k(a,b,c,d)$:
$$
k\left(\frac{a}{c}, \frac{b}{d}, \frac{a+b}{c+d} \right).
$$
Let $x = a/c$, $y = b/d$, and $z = (a+b)/(c+d)$. The divisor $E$ is the exceptional locus over the small diagonal $\Delta_{\rm{small}}$ defined by $x-y=0$ and $y-z=0$. Write $t$ for $(y-z)/(x-y)$. Then the local ring $\mathcal O_E$ at $E$ is
$$
k[t,x,y]_{(x-y)}.
$$
Note that $k(a,b,c,d) = k(x,y,z)(c^{-1})$ and $c^{-1}$ satisfies
$$
c^{-2} = -(x-y)(1+t^{-1}).
$$
Therefore, we conclude that $E$ is ramified. The divisors $F_i$ can be studied similarly.
\end{proof}

Let $Z \ra  \tilde{Z} \ra Y_\alpha$ be the stein factorization of $\varphi : Z \ra Y_\alpha$. We denote a $\SL_2$-equivariant birational map $Z \ra \tilde{Z}$ by $h$ and a degree 2 finite morphism $\tilde{Z} \ra Y_\alpha$ by $\tilde{\varphi}$. Let $\tilde{E}$ and $\tilde{F}_i$ be inverse images of $E$ and $F_i$ on $\tilde{Z}$ respectively, and we identify them with proper transforms of $\tilde{E}$ and $\tilde{F}_i$ on $Z$ respectively. Write $\{G_j\}$ for exceptional divisors of $h$. Define $M$ to be a sum of $\tilde{E}$, $\tilde{F}_i$'s and $G_j$'s so that $M$ is the boundary divisor of $Z$. Consider a divisor $\frac{2}{n}\varphi^*L + K_Z + M$, and write it as a linear combination of components of $M$:
$$
\frac{2}{n}\varphi^*L + K_Z + M = a\tilde{E} + \sum_{i=1}^n b_i \tilde{F}_i + \sum_j c_jG_j.
$$
\begin{lemm}
\label{lemm:coefficients}
We have 
$$
a = 0 , \quad b_i > 0, \quad \text{and} \quad c_j>0.
$$
\end{lemm}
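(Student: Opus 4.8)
The plan is to transfer the whole computation to the Stein factorization $Z \xrightarrow{h} \tilde{Z} \xrightarrow{\tilde\varphi} Y_\alpha$, where $\tilde\varphi$ is a finite morphism of degree $2$ and everything is controlled by the ramification formula, and then to pull back along the birational morphism $h$.

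First I would pin down the branch divisor of $\tilde\varphi$. Since $\SL_2 \to \PGL_2$ is étale, $\tilde\varphi$ is étale over the open orbit $\PGL_2 \subset Y_\alpha$, so by purity of the branch locus its branch divisor is a reduced divisor supported on the boundary $B := E + \sum_{i} F_i$ of $Y_\alpha$. Here one uses that $\tilde Z$ is normal and Gorenstein (étale-locally over $Y_\alpha$ it is a double cover $w^2 = s$ of a smooth variety, i.e. a hypersurface in a smooth total space), so that $K_{\tilde Z}$ is Cartier and the ramification formula applies. By Lemma~\ref{lemm:ramified}, $\tilde\varphi$ ramifies along every component of $B$, hence the branch divisor is exactly $B$, and $\tilde\varphi^* E = 2\tilde E$, $\tilde\varphi^* F_i = 2\tilde F_i$.

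Next, on $\tilde Z$ the ramification formula reads $K_{\tilde Z} = \tilde\varphi^* K_{Y_\alpha} + \tilde E + \sum_i \tilde F_i$. Feeding in $K_{Y_\alpha} = -2E - \sum_i F_i$ and Proposition~\ref{prop:coefficients}, and writing $M_{\tilde Z} := \tilde E + \sum_i \tilde F_i$ for the reduced boundary of $\tilde Z$, one obtains
\[
K_{\tilde Z} + M_{\tilde Z} = \tilde\varphi^*(K_{Y_\alpha} + B) = -2\tilde E, \qquad \tfrac{2}{n}\tilde\varphi^* L + K_{\tilde Z} + M_{\tilde Z} = \tfrac{2(n-2)}{n}\sum_{i=1}^n \tilde F_i .
\]
The first identity exhibits $(\tilde Z, M_{\tilde Z})$ as a crepant finite pullback of the log smooth pair $(Y_\alpha, B)$ (using that $B$ is strict normal crossings, Theorem~\ref{theo:strictnormalcrossings}); consequently $(\tilde Z, M_{\tilde Z})$ is log canonical, and any log canonical place of it whose center has codimension $\geq 2$ lies over a stratum $\bigcap_{j \in J} B_j$ with $|J| \geq 2$ of $B$ — in particular no such place has center contained in $E$ and disjoint from $\bigcup_i F_i$. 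Then I would pull the second identity back along $h$, using $\varphi^* L = h^* \tilde\varphi^* L$ and the discrepancy formula for $h$ relative to $(\tilde Z, M_{\tilde Z})$, together with the fact that $h$ is an isomorphism over $\SL_2$ so that $M = h^{-1}_* M_{\tilde Z} + \sum_j G_j$. This gives
\[
\tfrac{2}{n}\varphi^* L + K_Z + M = \tfrac{2(n-2)}{n}\sum_{i=1}^n \tilde F_i + \sum_j \Bigl( a(G_j; \tilde Z, M_{\tilde Z}) + 1 + \tfrac{2(n-2)}{n}\,\mathrm{mult}_{G_j}\bigl(h^* \textstyle\sum_i \tilde F_i\bigr) \Bigr) G_j ,
\]
with $\tilde F_i$ now denoting the proper transform on $Z$. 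Comparing coefficients immediately yields $a = 0$ and $b_i = \tfrac{2(n-2)}{n} > 0$. For $c_j$: if the center of $G_j$ in $\tilde Z$ is contained in some $\tilde F_{i_0}$, then $\mathrm{mult}_{G_j}(h^* \tilde F_{i_0}) > 0$ while $a(G_j; \tilde Z, M_{\tilde Z}) \geq -1$ by log canonicity, so $c_j > 0$; otherwise the center of $G_j$ lies in $\tilde E$ and in none of the $\tilde F_i$, whence by the previous paragraph $G_j$ is not a log canonical place, $a(G_j; \tilde Z, M_{\tilde Z}) > -1$, and again $c_j > 0$.

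The main obstacle is this last dichotomy: handling the exceptional divisors $G_j$ whose center sits inside $\tilde E$ but away from the $\tilde F_i$, and ruling out that any of them is a log canonical place of $(\tilde Z, M_{\tilde Z})$. This rests on the structural fact that the log canonical centers of a strict normal crossings pair are exactly its boundary strata, transported through the finite crepant morphism $\tilde\varphi$; making that transfer rigorous — the normality and Gorenstein property of $\tilde Z$, and the compatibility of log canonical centers under finite crepant maps — is where the genuine care is needed. Everything else is bookkeeping with the ramification and discrepancy formulae.
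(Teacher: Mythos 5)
Your argument is correct and follows essentially the same route as the paper: transfer log canonicity of $(Y_\alpha, E+\sum_i F_i)$ through the degree-two finite morphism $\tilde\varphi$, use Proposition~\ref{prop:coefficients} to identify $\frac{2}{n}\tilde\varphi^*L + K_{\tilde Z}+M_{\tilde Z}$ with $\frac{2(n-2)}{n}\sum_i\tilde F_i$, and then handle the $h$-exceptional $G_j$ by the same dichotomy on their centers (over some $\tilde F_i$ versus inside $\tilde E$ only, the latter case being the paper's purely-log-terminal step, which you phrase via lc centers of an snc pair). The only difference is that you spell out the Hurwitz/crepant-pullback bookkeeping that the paper delegates to the cited results of Koll\'ar et al.
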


\begin{proof}
It follows from \cite[Corollary 2.31]{KM98} that a pair $(Y_\alpha, E + \sum_i F_i)$ is log canonical. 
Then \cite[Proposition 20.2 and Proposition 20.3]{Ko92} implies that a pair $(\tilde{Z}, \tilde{E} + \sum_i \tilde{F}_i)$ is also log canonical. We obtain that
$$
\frac{2}{n}\varphi^*L + K_Z + M = \frac{2}{n}\varphi^*L + h^*(K_{\tilde{Z}} + \tilde{E} + \sum_i \tilde{F}_i) + \sum_j d_j G_j,
$$
where $d_j \geq 0$. Then again it follows from \cite[Proposition 20.2]{Ko92} that
$$
\frac{2}{n}\varphi^*L + K_Z + M = \varphi^*\left(\frac{2}{n}L + K_{Y_\alpha} + E + \sum_i F_i\right) + \sum_j d_j G_j.
$$
Now Proposition~\ref{prop:coefficients} implies that $a = 0$ and $b_i > 0$. Next we prove that $c_j > 0$. If $h(G_j) \subset \cup_i \tilde{F}_i$, then the support of $\varphi^*\left(\frac{2}{n}L + K_{Y_\alpha} + E + \sum_i F_i\right)$ contains $G_j$ so $c_j > 0$ follows. Suppose that $h(G_j) \not\subset \cup_i \tilde{F}_i$. Let $V = Y_\alpha \setminus \cup_i \tilde{F}_i$. It follows from \cite[Corollary 2.31]{KM98} that $(V, E)$ is purely log terminal. Hence \cite[Proposition 20.2 and Proposition 20.3]{Ko92} show that a pair $(\tilde{\varphi}^{-1}(V), \tilde{E})$ is also purely log terminal. Thus we obtain that $d_j >0$.
\end{proof}

\section{Asymptotic volume of height balls}
\label{sec:height balls}

In this section, we recall the results of \cite{volume} and apply them to study the volume function $V(T)$ defined in Section~\ref{sec:methodofmixing}.

\subsection{Clemens complexes and height zeta functions}
\label{subsec:clemens}
Let $F$ be a local field of characteristic zero and we fix an algebraic closure $F \subset \bar{F}$. Consider a smooth projective variety $X$ defined over $F$ with a reduced effective divisor $D$ over $F$. Let 
$$
D_{\bar{F}} = \cup_{\alpha \in \bar{\mathcal A}} D_\alpha,
$$
be the irreducible decomposition of $D_{\bar{F}}$. We assume that $D_{\bar{F}} = \sum_{\alpha \in \bar{\mathcal A}} D_\alpha$ is a divisor with strict normal crossings. For any $A \subset \bar{\mathcal A}$, define $D_A = \cap_{\alpha \in A} D_\alpha$. Note that $D_\emptyset = X_{\bar{F}}$. Because of strict normal crossings, $D_A$ is a disjoint union of smooth projective varieties of codimension $|A|$. We define the geometric Clemens complex $\mathcal C_{\bar{F}}(D)$ as the set of all pairs $(A, Z)$ where $A \subset \bar{\mathcal A}$ and $Z$ is an irreducible component of $D_A$(\cite[Section 3.1.3]{volume}). The geometric Clemens complex is endowed with the following partial order relation: $(A, Z) \prec (A', Z')$ if $A \subset A'$ and $Z \supset Z'$. Thus, $\mathcal C_{\bar{F}}(D)$ is a poset and its elements are called faces. When $a \prec b$, we say that $a$ is a face of $b$. The Galois group $\mathrm{Gal}(F)$ acts on $\mathcal C_{\bar{F}}(D)$ naturally. Since $F$ is a pe
 rfect field, an integral scheme $Z$ of $X_{\bar{F}}$ is defined over $F$ if and only if $Z$ is fixed by the Galois action $\rm{Gal}(F)$. We define the rational Clemens complex $\mathcal C_F(D)$ as the sub-poset of $\mathcal C_{\bar{F}}(D)$ consisting of $\mathrm{Gal}(F)$-fixed faces(\cite[Section 3.1.4]{volume}). For any $(A, Z) \in \mathcal C_F(D)$, $D_A$ and $Z$ are defined over $F$, so $Z(F)$ makes sense. We define the analytic Clemens complex $\mathcal C_F^{\mathrm{an}}(D)$ to be a sub-poset of $\mathcal C_F(D)$ consisting of pairs $(A, Z) \in \mathcal C_F(D)$ such that $Z(F) \neq \emptyset$.(\cite[Section 3.1.5]{volume}). In general, let $\mathcal P$ be a poset. The dimension of a face $p \in \mathcal P$ is defined as the supremum of the lengths $n$ of chains $p_0 \prec \cdots \prec p_n$, where $p_i$ are distinct and $p = p_n$. The dimension of $\mathcal P$ is the supremum of all dimensions of all faces.

Let $\mathcal A = \bar{\mathcal A} / \rm{Gal}(F)$ be the quotient of $\bar{\mathcal A}$ by the Galois group $\rm{Gal}(F)$. This set can be identified with the set of irreducible components of $D$. For each $\alpha \in \mathcal A$, we denote the corresponding divisor by $\Delta_\alpha$. Let $\mathcal L = (L, \| \cdot \|)$ be a metrized line bundle with a global section $\mathsf f_L$ whose support coincide with the support of $D$ (see Section~\ref{sec:heights} for a definition of metrized line bundles). Let $\omega$ be a non-vanishing top degree differential form on $U = X\setminus D$. We are interested in the following height zeta function:
$$
\mathsf Z(s) = \int_{U(F)} \| \mathsf f_L \|^s \, \mathrm d |\omega|,
$$
where $s$ is a complex number and $|\omega|$ is a measure associated to $\omega$(see \cite[Section 2.1.7]{volume} for a definition). The connection between height zeta functions and asymptotic volume of height balls is given by Tauberian theorems \cite[Appendix A]{volume}. 
\begin{theo}{\cite[Theorem A.1]{volume}}
\label{theo:tauberian}
Suppose that $\mathsf Z(s)$ admits a meromorphic continuation to the half plane $\{ \Re(s) > a - \delta \}$, where $a >0$ and $\delta >0$, with the unique pole at $s = a$ of order $b$. Then the volume function
\[
V(T) = \int_{\{ \sH_{\mathsf f_L} (P) = \| \mathsf f_L(P) \|^{-1} \leq T \}} \, \mathrm d |\omega|
\]
behaves like $\Theta T^a \log(T)^{b-1}$ as $T \rightarrow \infty$.
\end{theo}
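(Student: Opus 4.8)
The plan is to read the statement as a Tauberian theorem relating the Mellin--Stieltjes transform $\mathsf Z(s)$ to the distribution function $V(T)$, and to deduce it from an Ikehara--Delange type Tauberian theorem. The starting point is the elementary identity $\|\mathsf f_L(P)\| = \sH_{\mathsf f_L}(P)^{-1}$, which gives
\[
\mathsf Z(s) = \int_{U(F)} \sH_{\mathsf f_L}(P)^{-s}\,\mathrm d|\omega| = \int_0^\infty T^{-s}\,\mathrm dV(T),
\]
so $\mathsf Z(s)$ is precisely the Mellin--Stieltjes transform of $V$. Two features of $V$ are crucial. First, $V$ is non-decreasing, because it is the value at $T$ of the pushforward of the positive measure $|\omega|$ under $P \mapsto \sH_{\mathsf f_L}(P)$ and the sublevel sets $\{\sH_{\mathsf f_L}\le T\}$ increase with $T$; this is the Tauberian hypothesis. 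Second, since $X(F)$ is compact and $\|\mathsf f_L\|$ is continuous on it, $\sH_{\mathsf f_L}$ is bounded below by a positive constant, so $V(T) = 0$ for small $T$ and the integral above makes sense; moreover the convergence of the defining integral for $\mathsf Z(s)$ on some right half-plane (part of the hypothesis) forces $V(T)$ to grow at most polynomially, so there is no growth issue at infinity either.

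Next I would isolate the analytic data. By hypothesis $\mathsf Z(s)$ continues meromorphically to $\{\Re(s) > a - \delta\}$ with a single pole, of order $b$, at $s = a$; write its principal part there as $\sum_{k=1}^{b} c_{-k}(s-a)^{-k}$ with $c_{-b} = \lim_{s\to a}(s-a)^b\mathsf Z(s)\ne 0$. Now apply the Ikehara--Delange Tauberian theorem: if the Mellin--Stieltjes transform $\int_0^\infty T^{-s}\,\mathrm dA(T)$ of a non-decreasing $A$ is holomorphic on $\{\Re(s)\ge a\}$ except for a pole at $s = a$ with principal part $c_{-b}(s-a)^{-b}$ plus strictly lower order terms, and the transform minus this principal part extends to a neighborhood of the closed half-plane (here it is even meromorphic, hence real-analytic away from $s = a$), then $A(T) \sim \tfrac{c_{-b}}{a\,\Gamma(b)}\,T^{a}(\log T)^{b-1}$ as $T\to\infty$. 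Applying this with $A = V$ gives exactly $V(T) \sim \Theta\,T^{a}(\log T)^{b-1}$ with $\Theta = c_{-b}/(a\,\Gamma(b))$, which is positive since $c_{-b}>0$. A sanity check on the constant: substituting the putative asymptotic into $\int_1^\infty T^{-s}\,\mathrm d\big(T^{a}(\log T)^{b-1}\big)$ and setting $T = e^u$ yields $a\,\Gamma(b)\,(s-a)^{-b}$ to leading order, matching $c_{-b}$.

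The main obstacle is not any single computation but the selection of the right Tauberian input. One is tempted to argue directly by Perron's formula, shifting the contour in $\tfrac{1}{2\pi i}\int \mathsf Z(s)\,T^{s}\,\mathrm d s/s$ from $\Re(s) = c > a$ to $\Re(s) = a - \delta'$ and collecting the residue at $s = a$, which produces the expected main term $\operatorname{Res}_{s=a}\big(\mathsf Z(s)\,T^{s}/s\big)\sim \tfrac{c_{-b}}{a\,(b-1)!}\,T^{a}(\log T)^{b-1}$; but this route requires polynomial bounds on $|\mathsf Z(s)|$ in vertical strips, which are not among the hypotheses, so one must instead use a \emph{soft} Tauberian theorem. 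Here the delicate point is that the classical Wiener--Ikehara theorem only covers a simple pole ($b = 1$), and for $b > 1$ one genuinely needs Delange's refinement together with its hypotheses on the boundary regularity of the transform; fortunately these are met automatically in our situation, since the meromorphic continuation past the line $\Re(s) = a$ makes the relevant boundary function real-analytic off the pole. Verifying these hypotheses carefully, and identifying $\Theta$, then completes the argument.
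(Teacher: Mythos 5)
Your proposal is correct: the identification of $\mathsf Z(s)$ with the Mellin--Stieltjes transform $\int_0^\infty T^{-s}\,\mathrm dV(T)$ of the non-decreasing, polynomially bounded function $V$, followed by Delange's Tauberian theorem (with the boundary-regularity hypothesis supplied by the assumed meromorphic continuation past $\Re(s)=a$), is exactly the standard argument, and your constant $\Theta = c_{-b}/(a\,\Gamma(b))$ checks out. Note that the paper itself gives no proof of this statement --- it is quoted verbatim from \cite[Theorem A.1]{volume} --- and the proof in that appendix proceeds along essentially the same Delange-type lines you describe, so there is no divergence of approach to report.
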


Hence the meromorphic continuation of $\mathsf Z(s)$ is  the key to understand the asymptotic behavior of volume of height balls, and its properties are governed by Clemens complexes of $D$. More precisely write
$$
\rm{div}\,(\mathsf f_L)= \sum_{\alpha \in \mathcal A} \lambda_\alpha \Delta_\alpha, \quad -\rm{div}\,(\omega) = \sum_{\alpha \in \mathcal A} \kappa_\alpha \Delta_\alpha.
$$
Note that we are assuming that $\lambda_\alpha > 0$ for any $\alpha \in \mathcal A$. Define
$$
a(L, \omega) = \max_{\alpha \in \mathcal A} \frac{\kappa_\alpha - 1}{\lambda_\alpha}.
$$
Then $\mathsf Z(s)$ is holomorphic when $\Re (s) > a(L, \omega)$(\cite[Lemma 4.1]{volume}). Let $\mathcal A(L, \omega)$ denote the set of all $\alpha \in \mathcal A$ where the maximum is obtained, i.e., $a(L, \omega) = (\kappa_\alpha-1)/\lambda_\alpha$. Let $\mathcal C_{F, (L, \omega)}^{\rm{an}}(D)$ be a subposet of $\mathcal C_F^{\rm{an}}(D)$ consisting of $(A, Z)$ such that $A \subset \mathcal A(L, \omega)$. \cite[Proposition 4.3 and Corollary 4.4]{volume} claims that the height zeta function $\mathsf Z(s)$ admits a meromorphic continuation extended to a half plane $\Re(s) > a(L, \omega) - \delta$ for some $\delta > 0$ and its order of the pole at $s = a(L, \omega)$ is given by 1+ the dimension of the poset $\mathcal C_{F, (L, \omega)}^{\rm{an}}(D)$. We summarize the above discussion in the following theorem:
\begin{theo}{\cite[Lemma 4.1, Proposition 4.3, and Corollary 4.4]{volume}}
\label{theo:heightzeta}
The height zeta function $\mathsf Z(s)$ is holomorphic on a half plane $\Re (s) > a(L, \omega)$. Moreover, it admits a meromorphic continuation extended to a half plane $\Re(s) > a(L, \omega) - \delta$ for some $\delta > 0$ and the order of the pole at $s = a(L, \omega)$ is
$$
1 + \dim \mathcal C_{F, (L, \omega)}^{\rm{an}}(D).
$$
\end{theo}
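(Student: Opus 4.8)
The plan is to study $\mathsf Z(s)$ by a local analysis on the compact analytic manifold $X(F)$, using the strict normal crossings hypothesis on $D$. Fix a finite cover of $X(F)$ by analytic charts together with a subordinate partition of unity $\{\rho_k\}$. On a chart contained in $U(F)$ the integrand $\|\mathsf f_L\|^s$ is continuous and bounded on the relatively compact support of $\rho_k$, so that piece contributes an entire function of $s$; it therefore suffices to analyze a chart $\Omega$ around a point $P \in D(F)$. Say $P$ lies on exactly the components $\Delta_\alpha$ with $\alpha$ in a Galois-stable set $A \subset \mathcal A$. Since $P \in Z(F)$ for the irreducible component $Z$ of $D_A$ through $P$, the pair $(A,Z)$ is a face of the rational analytic Clemens complex $\mathcal C_F^{\mathrm{an}}(D)$; conversely every face arises this way.

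On $\Omega$, if $g_\alpha$ denotes an $F$-local equation of $\Delta_\alpha$, then by the definition of $\lambda_\alpha$ and $\kappa_\alpha$ one has $\|\mathsf f_L\|^s = u(x)\prod_{\alpha \in A}|g_\alpha|^{s\lambda_\alpha}$ and $\mathrm d|\omega| = v(x)\prod_{\alpha \in A}|g_\alpha|^{-\kappa_\alpha}\,\mathrm d\mu_\Omega$ for smooth nowhere-vanishing $u,v$ and a smooth measure $\mathrm d\mu_\Omega$. Hence the local contribution of $\rho_k$ is an integral $\int_\Omega \phi(x)\prod_{\alpha \in A}|g_\alpha(x)|^{s\lambda_\alpha - \kappa_\alpha}\,\mathrm d\mu_\Omega(x)$ with $\phi$ smooth, compactly supported, and $\phi(P)\neq 0$. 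After an $F$-analytic change of coordinates adapted to the branches, which parametrizes the geometric branches of $\Delta_\alpha$ through $P$ by the residue extension $K_\alpha/F$ they cut out, this reduces (Taylor expansion at the archimedean place; local constancy at a non-archimedean one) to linear combinations of products over $\alpha \in A$ of one-variable Igusa-type integrals $\int_{\mathcal O_{K_\alpha}}|N_{K_\alpha/F}(w)|_F^{s\lambda_\alpha - \kappa_\alpha + m}\,\mathrm dw$ and their archimedean analogues. Each such integral is meromorphic in $s$ — a rational function of $q^{-s}$ in the non-archimedean case — and, crucially, is a single geometric series, so its poles of largest real part lie on the line $\Re(s) = (\kappa_\alpha - 1 - m)/\lambda_\alpha$ and are \emph{simple}, the extreme one being at the real point $s = (\kappa_\alpha - 1)/\lambda_\alpha$; in particular geometric reducibility of $\Delta_\alpha$ does not raise the pole order contributed by that component.

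Assembling the pieces yields the theorem. For $\Re(s) > a(L,\omega) = \max_\alpha (\kappa_\alpha - 1)/\lambda_\alpha$ every local integral converges absolutely and locally uniformly in $s$, so $\mathsf Z(s)$ is holomorphic there. Because the cover is finite, there is a uniform $\delta > 0$ such that on $\Re(s) > a(L,\omega) - \delta$ every local integral has all of its poles on the line $\Re(s) = a(L,\omega)$, which gives the meromorphic continuation. On the chart $\Omega$ around $P$ the local contribution has a pole at $s = a(L,\omega)$ of order exactly $k_P := \#\{\alpha \in A : (\kappa_\alpha - 1)/\lambda_\alpha = a(L,\omega)\} = |A \cap \mathcal A(L,\omega)|$ — at most $k_P$ since only those factors are singular there, and exactly $k_P$ since the relevant leading coefficient is a positive multiple of $\phi(P)\neq 0$. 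Writing $A_P = A \cap \mathcal A(L,\omega)$, the face $(A_P, Z_P)$ lies in $\mathcal C_{F,(L,\omega)}^{\mathrm{an}}(D)$, and every face of this poset is realized at some $F$-point. Since $\mathsf Z(s)$ is a positive real number for real $s > a(L,\omega)$ (equivalently, by the positivity asserted in Theorem~\ref{theo:tauberian}), the leading coefficients coming from distinct charts cannot cancel, so the order of the pole of $\mathsf Z(s)$ at $s = a(L,\omega)$ equals $\max_P k_P = \max\{|A| : (A,Z) \in \mathcal C_{F,(L,\omega)}^{\mathrm{an}}(D)\} = 1 + \dim \mathcal C_{F,(L,\omega)}^{\mathrm{an}}(D)$.

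The main obstacle is the non-archimedean bookkeeping: verifying that the relevant $p$-adic integrals are rational in $q^{-s}$ with exactly the asserted pole pattern (a single simple pole per boundary component at its real critical point, despite possible geometric reducibility), and that for a chart meeting the deepest stratum indexed by $\mathcal A(L,\omega)$ the residue of the corresponding product at $s = a(L,\omega)$ is genuinely nonzero, i.e. no $p$-adic volume of the relevant analytic set degenerates. One must also match the local combinatorics — an $F$-rational point $P$ and the $\mathcal A$-components through it — with the rational analytic Clemens complex, which requires careful tracking of Galois orbits and of which strata actually carry $F$-points; this is handled through the very definition of $\mathcal C_F^{\mathrm{an}}(D)$, while the non-vanishing of residues is forced by the positivity/Tauberian input.
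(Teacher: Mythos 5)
The paper does not prove this statement at all: it is quoted verbatim from Chambert-Loir--Tschinkel (\cite[Lemma 4.1, Proposition 4.3, Corollary 4.4]{volume}), and the ``proof'' in the paper is just that citation. Your sketch is essentially the argument of the cited source -- partition of unity adapted to the strict normal crossings divisor, reduction to multivariate Igusa-type integrals, one simple pole per $F$-irreducible boundary component (with the norm-form computation handling geometric reducibility), and positivity to rule out cancellation of leading terms -- and it is sound. One imprecision worth fixing: you slide between $A \subset \mathcal A$ and $A \subset \bar{\mathcal A}$, whereas the faces $(A,Z)$ of the Clemens complex have $A \subset \bar{\mathcal A}$; the identity $\max |A| = 1 + \dim \mathcal C^{\rm an}_{F,(L,\omega)}(D)$ is false with that reading (a Galois orbit of $r$ conjugate geometric components crossing at an $F$-point gives $|A|=r$ but contributes a chain of length $0$ and, as your own norm-form computation shows, only a simple pole). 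The correct bookkeeping is that chains in the \emph{rational} complex increase by whole Galois orbits, so $1+\dim$ equals the maximal number of Galois orbits (equivalently, $F$-components in $\mathcal A(L,\omega)$) meeting at an $F$-point -- which is exactly your $\max_P k_P$, so the conclusion stands once the indexing is made consistent.
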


\subsection{Asymptotic volume}
\label{subsec:volume}
We retain the notations in Section~\ref{sec:heights} and Section~\ref{sec:moduli}. For our arithmetic applications, we need to construct moduli spaces $\Moduli_{0,n}(\bP^1,1)$ and $\Moduli_{0,n}$ over $\rm{Spec}(\mathbb Q)$. This is done in \cite{Ba08}. In fact, the moduli spaces of stable maps are constructed over $\mathrm{Spec}(\bZ)$ via geometric invariant theory.

Let $[f] \in \bP(W_n)(\bQ)$ be a binary form of degree $n$ with $\bQ$-coefficients and distinct roots. Then $X_f$ is the Zariski closure of the $\SL_2$-orbit of $[f]$ and it is defined over $\bQ$. We consider the $\SL_2$-equivariant birational morphism
$$
\varrho : \tilde{X} = \Moduli_{0,n}(\bP^1,1)/\mathfrak S_n \ra \bP(W_n),
$$
which is a $\mathfrak S_n$-quotient of the evaluation map $\rm{ev} : \Moduli_{0,n}(\bP^1,1) \ra (\bP^1)^n$. We denote the quotient map $\Moduli_{0,n}(\bP^1) \ra \tilde{X}$ by $q$. Let $\tilde{X}_f \subset \tilde{X}$ be the strict transform of $X_f$ which is again defined over $\bQ$. Write $F_f$ for the splitting field of $f$ and $\alpha = (\alpha_1, \cdots, \alpha_n)$ for roots of $f$. Then the pointed rational curve $C_\alpha = (\bP^1, \alpha_1, \cdots, \alpha_n)$ is defined over $F_f$, hence $Y_\alpha = \psi^{-1}(C_\alpha)$ is defined over $F_f$ where $\psi : \Moduli_{0,n}(\bP^1,1) \ra \Moduli_{0,n}$ is the forgetting map. Define
$$
A[n] = \tilde{X}_f \cap \tilde{D}[n], \quad A[n-1] = \tilde{X}_f \cap \tilde{D}[n-1].
$$
Note that since $\tilde{D}[n]$ and $\tilde{D}[n-1]$ are defined over $\bQ$, $A[n]$ and $A[n-1]$ are also defined over $\bQ$. The divisor $A[n]$ is geometrically irreducible, but $A[n-1]$ may not be. We have
\begin{lemm}
\label{lemm:dense}
The set $A[n](\bQ)$ is Zariski dense in $A[n]$.
\end{lemm}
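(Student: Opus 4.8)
I will prove the lemma by exhibiting an explicit Zariski-dense family of rational points, constructed on the Kontsevich moduli space. Recall that $q : \Moduli_{0,n}(\bP^1,1) \ra \tilde{X}$ denotes the $\mathfrak{S}_n$-quotient map; since $\mathfrak{H}$ is precisely the subgroup of $\mathfrak{S}_n$ stabilizing $Y_\alpha = \psi^{-1}(C_\alpha)$, the map $q$ restricts to the quotient $Y_\alpha \ra Y_\alpha/\mathfrak{H} \cong \tilde{X}_f$, so $q(Y_\alpha) = \tilde{X}_f$; moreover $q(B[n]) = \tilde{D}[n]$. As $q$ is finite and $E = Y_\alpha \cap B[n]$ is a geometrically irreducible surface, $q(E)$ is a closed irreducible surface contained in $q(Y_\alpha) \cap q(B[n]) = \tilde{X}_f \cap \tilde{D}[n] = A[n]$; since $A[n]$ is itself a geometrically irreducible surface, it follows that $q(E) = A[n]$. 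Hence it suffices to produce a subset of $E(\bar{\bQ})$ that is Zariski dense in $E_{\bar{\bQ}}$ and whose image under $q$ is contained in $A[n](\bQ)$. For this I will use that, exactly as in the case $n = 3$ recalled above, a point of $E$ is described geometrically by a stable map whose collapsed component carries the fixed configuration $C_\alpha$ together with one extra point, the node, sitting at some $t \in \bP^1$, glued to a line component that is mapped isomorphically onto the target $\bP^1$ with the node going to some $s \in \bP^1$; thus the open dense locus of $E$ is identified over $\bar{\bQ}$ with an open subset of $\bP^1 \times \bP^1$ via the pair $(t,s)$, where $t$ is the position of the node on $C_\alpha$ and $s = \mu(\text{node})$.

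Next I will construct, for every pair $(t,s) \in \bP^1(\bQ) \times \bP^1(\bQ)$ with $t \notin \{\alpha_1,\dots,\alpha_n\}$, the corresponding point $\mathcal{S}(t,s) \in E(\bar{\bQ})$: take $C = C_1 \cup C_2$ where $C_1 = \bP^1$ carries the marked points $\alpha_1,\dots,\alpha_n$ and a node at $t$, where $C_2 = \bP^1$ is attached to $C_1$ at that node, placed at a fixed $\bQ$-point of $C_2$, and where $\mu$ collapses $C_1$ and restricts on $C_2$ to the isomorphism onto the target $\bP^1$ sending the node to $s$. Forgetting $\mu$ and collapsing the destabilized component $C_2$ (which carries only the node as a special point) returns $(\bP^1,\alpha_1,\dots,\alpha_n) = C_\alpha$, so $\mathcal{S}(t,s)$ indeed lies in $E$. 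Because $t,s \in \bQ$, the curve $C$, the gluing, the node and the morphism $\mu$ all admit models over $\bQ$, so the only part of the datum $\mathcal{S}(t,s)$ not defined over $\bQ$ is the ordered $n$-tuple of marked points $(\alpha_1,\dots,\alpha_n)$. Since $\Moduli_{0,n}(\bP^1,1)$ is defined over $\bQ$ by \cite{Ba08}, the group $\mathrm{Gal}(\bar{\bQ}/\bQ)$ acts on the point $\mathcal{S}(t,s)$ only by permuting these $n$ marked points, i.e. through the subgroup $\mathfrak{S}_n$; hence the image $q(\mathcal{S}(t,s)) \in \tilde{X}(\bar{\bQ})$ is Galois-invariant, so it is a $\bQ$-point, and by the first paragraph it lies in $A[n](\bQ)$.

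Finally, the admissible pairs $(t,s)$ form the set $(\bP^1(\bQ)\setminus\{\alpha_1,\dots,\alpha_n\}) \times \bP^1(\bQ)$, which is Zariski dense in $\bP^1 \times \bP^1$, hence in $E_{\bar{\bQ}}$; since $q$ is finite, in particular closed, the Zariski closure of $\{\,q(\mathcal{S}(t,s))\,\}$ equals $q(E) = A[n]$, and all these points lie in $A[n](\bQ)$, so $A[n](\bQ)$ is Zariski dense in $A[n]$. The step that needs the most care is the Galois-equivariance assertion in the second paragraph: one must exhibit, for rational $t$ and $s$, an honest $\bQ$-model of $C$ together with the node, the gluing, and $\mu$, so that the Galois action on $\mathcal{S}(t,s)$ is visibly the relabelling action of a subgroup of $\mathfrak{S}_n$ and therefore becomes trivial after passing to the $\mathfrak{S}_n$-quotient $\tilde{X}$; granting this, the density statement is formal.
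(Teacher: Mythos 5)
Your proof is correct and follows essentially the same strategy as the paper's: construct stable maps with the roots of $f$ on the collapsed component and all other data ($C$, the node, the gluing, $\mu$) rational, observe that the Galois action only permutes the markings so the image under the $\mathfrak S_n$-quotient $q$ is a $\bQ$-point, and vary the two parameters $(t,s)$ to get Zariski density. You supply some details the paper leaves implicit (the identification of the dense locus of $E$ with an open subset of $\bP^1\times\bP^1$ and the verification that $q(E)=A[n]$), but the argument is the same.
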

\begin{proof}
Let $(C, \alpha_1, \cdots, \alpha_n, \mu)$ be a stable map such that
\begin{itemize}
\item
$C$ is the union of two $\bP^1$s and both $\bP^1$s are defined over $\bQ$,
\item
marked points $\alpha_1, \cdots, \alpha_n$ are on the collapsed component,
\item
a map $\mu$ is also defined over $\bQ$.
\end{itemize}
Then $(C, \alpha, \mu)$ is a stable map defined over the splitting field $F_f$ and it corresponds to a $F_f$-rational point $P$ on $\Moduli_{0,n}(\bP^1,1)$. Consider a Galois action $\sigma \in \mathrm{Gal}(F_{f}/\bQ)$. A point $\sigma P$ corresponds to $(C, \sigma \alpha, \mu)$, hence we have
$q(P) = q(\sigma P)$. This means that $q(P)$ is $\mathrm{Gal}(F_f/\bQ)$-fixed, thus $q(P)$ is a $\bQ$-rational point. Now we can vary the intersection of $C$ and the value of $\mu$ so that $A[n](\bQ)$ is Zariski dense.
\end{proof}

Let $Z$ be a smooth $\SL_2$-equivariant compactification of $\SL_2$ defined over $\bQ$. We have a $\SL_2$-equivariant rational map $\varphi : Z \dashrightarrow \tilde{X}_f$ mapping $\SL_2 \ni g \mapsto g[f] \in X_f$ and after applying $\SL_2$-equivariant resolution, if necessary, we may assume that $\varphi$ is an honest morphism. We denote the morphism from $Z \ra X_f$ by $\varphi$ too. Write $\omega$ for the top invariant differential form on $\SL_2$. Let $S$ be a finite set of places including the archimedean place. For each $v \in S$, we are interested in the following height zeta function: 
$$
\mathsf Z_v(s) = \int_{Z(\bQ_v)} \mathsf H_v(\varphi(z))^{-s} \, \mathrm d |\omega|_v(z)
$$
where $\mathsf H_v$ is the local height function defined in Section~\ref{sec:heights}. We have
\begin{theo}
\label{theo:analyticproperties}
Assume that either
\begin{itemize}
\item
$n \geq 5$ and $f$ is general enough so that $Y_\alpha \cong \tilde{X}_f \otimes F_f$, or
\item
all roots of $f$ are $\bQ$-rational.
\end{itemize}
Then the height zeta function $\mathsf Z_v(s)$ is holomorphic on a half plane $\Re(s) > \frac{2}{n}$ and it admits a meromorphic continuation extended to a half plane $\Re(s) > \frac{2}{n} - \delta$ for some $\delta >0$. Moreover the order of the pole at $s = \frac{2}{n}$ is 1.
\end{theo}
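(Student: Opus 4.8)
The plan is to recognize $\mathsf Z_v(s)$ as one of the local height zeta functions of Section~\ref{subsec:clemens} and then read off its behaviour from Theorem~\ref{theo:heightzeta}. I carry out the argument under hypothesis (a) (under hypothesis (b) everything below is already defined over $\bQ$). Take the $\SL_2$-equivariant morphism $\varphi\colon Z\to X_f\subset\bP(W_n)$ from a smooth equivariant compactification of $\SL_2$ over $\bQ$ with strict normal crossings boundary $M$, factoring through $Z\to Y_\alpha$ as in Section~\ref{sec:moduli}, and let $\mathcal L$ be the metrized line bundle on $Z$ with underlying class $\varphi^*\beta^*\mathcal O_{\bP(W_n)}(1)$ obtained by pullback. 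Because $\mathsf H_v$ is the local height of $\mathcal O_{\bP(W_n)}(1)$ attached to the $\bQ$-divisor $\frac1{2n-2}D$ (Section~\ref{sec:heights}), the pulled-back section $\mathsf f_{\mathcal L}$ satisfies $\|\mathsf f_{\mathcal L}\|_v=\mathsf H_v(\varphi(\cdot))^{-1}$, whence
$$
\mathsf Z_v(s)=\int_{Z(\bQ_v)}\|\mathsf f_{\mathcal L}(z)\|_v^{\,s}\,\mathrm d|\omega|_v(z),
$$
with $\mathrm{div}(\mathsf f_{\mathcal L})=\tfrac1{2n-2}\varphi^*\beta^*D$ a $\bQ$-divisor supported on $M$ and $-\mathrm{div}(\omega)$ supported on $M$ (the invariant form $\omega$ being nowhere vanishing on the open orbit $\SL_2\subset Z$). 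This is exactly the data $(Z,M,\mathcal L,\mathsf f_{\mathcal L},\omega)$ of Section~\ref{subsec:clemens}, so by Theorem~\ref{theo:heightzeta} it suffices to prove $a(\mathcal L,\omega)=\frac2n$ and $\dim\mathcal C^{\mathrm{an}}_{\bQ_v,(\mathcal L,\omega)}(M)=0$.

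For the first point one computes, along the boundary components $\tilde E,\tilde F_1,\dots,\tilde F_n,G_1,G_2,\dots$ of $Z$ (notation of Section~\ref{sec:moduli}), the multiplicities $\lambda_\bullet$ of $\mathrm{div}(\mathsf f_{\mathcal L})$ and $\kappa_\bullet$ of $-\mathrm{div}(\omega)$. Since $s_D$ cuts out the reduced discriminant divisor, $\beta^*D$ is an effective divisor supported on the boundary $E\cup\bigcup_iF_i$ of $Y_\alpha$ with class $(2n-2)\beta^*\mathcal O(1)=n(n-1)E+(n-1)(n-2)\sum_iF_i$; as $E,F_1,\dots,F_n$ is a basis of $\mathrm{Pic}(Y_\alpha)_\bQ$, this pins down $\beta^*D=n(n-1)E+(n-1)(n-2)\sum_iF_i$ as a divisor, so $\mathrm{div}(\mathsf f_{\mathcal L})=\varphi^*\!\big(\tfrac n2E+\tfrac{n-2}2\sum_iF_i\big)=\varphi^*L$ with $L$ as in Proposition~\ref{prop:coefficients}. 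Writing $\varphi^*E=2\tilde E+\sum_ju_jG_j$, $\varphi^*F_i=2\tilde F_i+\sum_jv_{ij}G_j$ (the factor $2$ from Lemma~\ref{lemm:ramified}), this gives $\lambda_{\tilde E}=n$, $\lambda_{\tilde F_i}=n-2$, $\lambda_{G_j}=\tfrac n2u_j+\tfrac{n-2}2\sum_iv_{ij}$; and $\lambda_{G_j}>0$, because $\varphi$ is $\SL_2$-equivariant and $M$ is a union of positive-codimension orbits, so $\varphi(M)\subset Y_\alpha\setminus\PGL_2=E\cup\bigcup_iF_i$, forcing the irreducible $G_j$ into $\varphi^{-1}(E)$ or some $\varphi^{-1}(F_i)$ with positive multiplicity. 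Rather than compute the $\kappa_\bullet$ directly, I would invoke Lemma~\ref{lemm:coefficients}: since $\mathrm{div}(\mathsf f_{\mathcal L})=\varphi^*L$ and $M=\sum_\bullet\Delta_\bullet$, the identity $\tfrac2n\varphi^*L+K_Z+M=0\cdot\tilde E+\sum_ib_i\tilde F_i+\sum_jc_jG_j$ with $b_i,c_j>0$ rewrites as $\sum_\bullet\big(\tfrac{2\lambda_\bullet}n+1-\kappa_\bullet\big)\Delta_\bullet$ equal to the right-hand side; matching coefficients and using $\lambda_{\tilde F_i},\lambda_{G_j}>0$ yields
$$
\frac{\kappa_{\tilde E}-1}{\lambda_{\tilde E}}=\frac2n,\qquad \frac{\kappa_{\tilde F_i}-1}{\lambda_{\tilde F_i}}<\frac2n,\qquad \frac{\kappa_{G_j}-1}{\lambda_{G_j}}<\frac2n,
$$
so $a(\mathcal L,\omega)=\frac2n$ and $\mathcal A(\mathcal L,\omega)=\{\tilde E\}$.

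It remains to see $\dim\mathcal C^{\mathrm{an}}_{\bQ_v,(\mathcal L,\omega)}(M)=0$. Since $\mathcal A(\mathcal L,\omega)$ is the single geometrically irreducible divisor $\tilde E$, every face of this poset with $A\ne\emptyset$ has $A=\{\tilde E\}$, so the poset reduces to the one face $(\{\tilde E\},\tilde E)$; this face is genuinely present because $\tilde E$ maps birationally onto $A[n]$ (birationally as $\varphi$ is ramified along $\tilde E$), $A[n]$ is defined over $\bQ$ hence so is $\tilde E$, and $A[n](\bQ)$ — therefore $\tilde E(\bQ_v)$ — is nonempty by Lemma~\ref{lemm:dense}. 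Thus Theorem~\ref{theo:heightzeta} gives: $\mathsf Z_v(s)$ is holomorphic on $\{\Re(s)>\frac2n\}$, extends meromorphically to $\{\Re(s)>\frac2n-\delta\}$ for some $\delta>0$, and has a unique pole at $s=\frac2n$, of order $1+0=1$. (In case (b) the same argument runs over $\bQ$; in both cases the $\tilde F_i$ may be permuted by $\mathrm{Gal}(\overline{\bQ_v}/\bQ_v)$, which is irrelevant as they do not lie in $\mathcal A(\mathcal L,\omega)$.)

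The crux is the second step — extracting the precise divisors $\beta^*D$, $\mathrm{div}(\mathsf f_{\mathcal L})$, and the discrepancies $\kappa_\bullet$ on the resolution $Z$ — but the genuinely hard geometric input (log canonicity of $(Y_\alpha,E+\sum_iF_i)$, the behaviour of discrepancies under the degree-$2$ cover and the resolution $h$, and above all the \emph{strict} positivity of the $b_i,c_j$) has already been established in Lemma~\ref{lemm:ramified} and Lemma~\ref{lemm:coefficients}. Granted those, the rest amounts to the bookkeeping identities $\lambda_{\tilde E}=n$, $\kappa_{\tilde E}=3$ plus a direct application of the Chambert-Loir–Tschinkel formalism; the one additional point requiring attention is that \emph{all} boundary components satisfy $\lambda_\bullet>0$ — a standing hypothesis of \cite{volume} — which is precisely the positivity $\lambda_{G_j}>0$ verified above via equivariance.
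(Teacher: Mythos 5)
Your proposal is correct and follows essentially the same route as the paper: identify $\mathsf Z_v$ with a Chambert-Loir--Tschinkel integral on the resolution $Z$, use Lemma~\ref{lemm:coefficients} to get $a(L,\omega)=\tfrac2n$ with $\tilde E$ as the unique maximizing divisor, and use Lemma~\ref{lemm:dense} together with Lemma~\ref{lemm:ramified} to see that the analytic Clemens complex reduces to the single face $\tilde E$. Your write-up merely makes explicit some bookkeeping the paper leaves implicit (the computation $\mathrm{div}(\mathsf f_{\mathcal L})=\varphi^*L$ via $\beta^*D=n(n-1)E+(n-1)(n-2)\sum_iF_i$, and the verification that $\lambda_{G_j}>0$), all of which checks out.
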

\begin{proof}
Suppose that $n \geq 5$ and $f$ is general. 
Let $\{ \Delta_\alpha \}_{\alpha \in \mathcal A}$ be the irreducible decomposition of the boundary divisor $D$ of $Z$. Let $\mathsf f$ be the pullback of the discriminant divisor on $Z$. Let 
\[
\mathrm{div} (\mathsf f) = \sum_{\alpha \in \mathcal A} \lambda_\alpha \Delta_\alpha, \quad -\mathrm{div}(\omega) = \sum_{\alpha \in \mathcal A} \kappa_\alpha \Delta_\alpha
\]
Then Lemma~\ref{lemm:coefficients} implies that 
\[
\max_{\alpha \in \mathcal A} \left\{\frac{\kappa_\alpha - 1}{\lambda_\alpha} \right\} = \frac{2}{n}.
\]
Moreover, the Clemens complex $C^{\mathrm{an}}_{\bQ_v, (L, \omega)}(D)$ consists of one element corresponding to $A[n]$; indeed, Lemma~\ref{lemm:dense} implies that $A[n]$ has a dense set of rational points, and Lemma~\ref{lemm:ramified} concludes that the ramification divisor $\tilde{E}$ above $A[n]$ also has a dense set of rational points. This means that this ramification divisor $\tilde{E}$ is an element of the analytic Clemens complex. Lemma~\ref{lemm:coefficients} guarantees that $\tilde{E}$ is the only divisor which achieves $\frac{\kappa_\alpha - 1}{\lambda_\alpha} = \frac{2}{n}$. Now our assertion follows from Theorem~\ref{theo:heightzeta}.

Assume that all roots of $f$ are $\bQ$-rational. Then the $Y_\alpha$ is defined over $\bQ$ and the divisor $E = Y_\alpha \cap B_N$ contains Zariski dense $\bQ$-rational points. Thus our assertion follows from Lemma~\ref{lemm:ramified}, Lemma~\ref{lemm:coefficients} and Theorem~\ref{theo:heightzeta}.
\end{proof}

We define
$$
G = \prod_{v \in S} \SL_2(\bQ_v)
$$
and consider the height ball
$$
\mathsf B(T) = \{ g \in G \mid \mathsf H(g \cdot [f]) \leq T\}.
$$
where $\mathsf H$ is the global height function defined by $\mathsf H = \prod_{v \in S} \mathsf H_v$. Let $\mu_G = \prod_{v \in S} |\omega|_v$ be a Haar measure. We denote the volume function of the height ball $\mathsf B(T)$ with respect to a Haar measure $\mu_G$ by $V(T)$. Then we have
\begin{coro}
\label{coro:precise}
Suppose that $n \geq 5$ and $f$ is general, or all roots of $f$ are $\bQ$-rational. Then we have
$$
V(T) \sim c T^{\frac{2}{n}}(\log T)^{\# S -1},
$$
for some $c>0$.
\end{coro}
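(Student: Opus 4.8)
The plan is to read off the asymptotics of $V(T)$ from the analytic behaviour of the global height zeta function
$$
\mathsf Z(s) = \int_G \mathsf H(g\cdot[f])^{-s}\,\mathrm d\mu_G(g),
$$
using the Tauberian theorem of \cite[Appendix A]{volume} (Theorem~\ref{theo:tauberian}), since $\mathsf Z(s)$ is precisely the Mellin transform $\int_0^\infty t^{-s}\,\mathrm dV(t)$ of the volume function. The hypotheses of the corollary are exactly those of Theorem~\ref{theo:analyticproperties}, which supplies the local input.

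First I would record the product decomposition $\mathsf Z(s) = \prod_{v\in S}\mathsf Z_v(s)$. Since $\varphi : Z \ra X_f$ restricts on the open orbit $\SL_2 \subset Z$ to the orbit map $g \mapsto g\cdot[f]$, and the complement $Z(\bQ_v)\setminus\SL_2(\bQ_v)$ is the $\bQ_v$-locus of a lower-dimensional subvariety and hence has $|\omega|_v$-measure zero, the change of variables along $\varphi$ identifies the local height zeta function of Theorem~\ref{theo:analyticproperties} with the orbit integral $\mathsf Z_v(s) = \int_{\SL_2(\bQ_v)}\mathsf H_v(g\cdot[f])^{-s}\,\mathrm d|\omega|_v(g)$. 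Because $\mathsf H = \prod_{v\in S}\mathsf H_v$ and $\mu_G = \prod_{v\in S}|\omega|_v$, Fubini--Tonelli (valid in the range of absolute convergence, where the integrand has modulus $\prod_v\mathsf H_v^{-\Re(s)}$) gives $\mathsf Z(s) = \prod_{v\in S}\mathsf Z_v(s)$.

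Next I would assemble the analytic continuation. By Theorem~\ref{theo:analyticproperties}, under either hypothesis of the corollary each factor $\mathsf Z_v(s)$ is holomorphic on $\Re(s) > \frac{2}{n}$ and extends meromorphically to $\Re(s) > \frac{2}{n} - \delta_v$ for some $\delta_v > 0$, with its only pole there a simple pole at $s = \frac{2}{n}$. Taking $\delta = \min_{v\in S}\delta_v > 0$, the product $\mathsf Z(s)$ is holomorphic on $\Re(s) > \frac{2}{n}$ and meromorphic on $\Re(s) > \frac{2}{n} - \delta$ with a unique pole at $s = \frac{2}{n}$. The essential point is that this pole has order \emph{exactly} $\#S$: each $\mathsf Z_v$ is an integral of a nonnegative function against a positive measure, so its leading Laurent coefficient at $s = \frac{2}{n}$ is a positive real number (nonzero because the pole has order $1$, positive because $\mathsf Z_v(s)\to+\infty$ as $s\to(2/n)^+$ through real values); hence the leading Laurent coefficient of $\mathsf Z(s)$ at $s = \frac{2}{n}$ is the product of these, still a positive real, so no cancellation can lower the pole order.

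Finally I would apply Theorem~\ref{theo:tauberian} with $a = \frac{2}{n} > 0$ and $b = \#S$, which yields $V(T) \sim c\,T^{2/n}(\log T)^{\#S-1}$ with $c$ proportional to the positive leading Laurent coefficient of $\mathsf Z(s)$ at $s = \frac{2}{n}$, hence $c > 0$; when $\#S = 1$ this recovers the shape $cT^{2/n}$ of Theorem~\ref{theo:DRS}. The main obstacles are the two positivity-type points above: justifying that the global ball volume genuinely factors through the local orbit integrals of Theorem~\ref{theo:analyticproperties}, and ruling out cancellation among the local poles so that the order is $\#S$ rather than smaller — both rest on positivity of the measures $|\omega|_v$ and of the integrands. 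Any discrepancy between the local measures $|\omega|_v$ and the fixed Haar measure $\mu_G$ affects only the value of $c$, not the shape of the asymptotic.
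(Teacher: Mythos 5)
There is a genuine gap in the final step. Your assembly of the local factors is fine, and the positivity argument ruling out cancellation of the pole at $s=\tfrac{2}{n}$ matches what the paper needs; but your claim that the product $\mathsf Z(s)=\prod_{v\in S}\mathsf Z_v(s)$ has a \emph{unique} pole in the strip $\tfrac{2}{n}-\delta<\Re(s)$ is false as soon as $S$ contains a non-archimedean place, and this is exactly the new case beyond Duke--Rudnick--Sarnak. For $v=p$ finite, the local integral $\mathsf Z_p(s)$ is (up to the correcting factor $(1-p^{-(s-2/n)})$) a function of $p^{-s}$ and hence $\tfrac{2\pi i}{\log p}$-periodic; it therefore has poles at every point $s=\tfrac{2}{n}+\tfrac{2\pi i k}{\log p}$, $k\in\bZ$, on the vertical line $\Re(s)=\tfrac{2}{n}$. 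Theorem~\ref{theo:analyticproperties} asserts only the order of the pole \emph{at} $s=\tfrac{2}{n}$, not that it is the only pole in the half-plane, so the hypothesis of Theorem~\ref{theo:tauberian} (``the unique pole at $s=a$'') is simply not satisfied, and that Tauberian theorem cannot be invoked.

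The paper gets around this by applying the refined Tauberian theorem \cite[Theorem~A.7]{volume}, which is designed for a product of one ``aperiodic'' factor with several periodic ones. Its hypotheses include a Diophantine condition: for distinct primes $p,q\in S$ the ratio $\log p/\log q$ must not be a Liouville number, so that the off-line poles of the various periodic factors do not conspire to spoil the asymptotic. This is supplied by Baker's theorem bounding the irrationality measure of $\log p/\log q$. Your argument as written proves the corollary only when $S=\{\infty\}$; to cover general $S$ you must replace the appeal to Theorem~\ref{theo:tauberian} by this two-step argument (isolate the periodic parts of the finite-place factors, then apply the multi-variable Tauberian theorem together with Baker's Diophantine input).
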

\begin{proof}
To prove this corollary, we need to consider the following height zeta function:
\[
\int_{G} \sH(g \cdot [f])^{-s} \, \mathrm d \mu_G = \prod_{v \in S} \int_{\SL_2(\bQ_v)} \sH_v(g_v \cdot [f])^{-s} \, \mathrm d |\omega|_v =: \prod_{v \in S} \sZ_v(s).
\]
Then it follows from Theorem~\ref{theo:analyticproperties} that this zeta function has a pole at $s = 2/n$ of order $\#S$. If $S$ consists of the real place, then we can apply Theorem~\ref{theo:tauberian} to conclude our assertion. However when $S$ contains a non-archimedean place $p$, then the local zeta function $\sZ_p(s)$ is $\frac{2\pi i}{\log p}$-periodic and we cannot apply Theorem~\ref{theo:tauberian} since $\sZ_p(s)$ has infinitely many poles on the vertical line $\Re(s) = 2/n$. More precisely the following function
\[
(1-p^{-(s-\frac{2}{n})})\sZ_p(s)
\]
admits an analytic continuation to the half plane $\{\Re(s) > \frac{2}{n} - \delta \}$ for some $\delta >0$(\cite[Proposition 4.2]{volume}).
Instead, we apply \cite[Theorem A.7]{volume} to $\sZ(s)$. To apply this theorem, one needs to verify that $\log p / \log q$ is not Liouville number for distinct primes $p, q$. This fact is proved in \cite{Baker}. \cite[Theorem 3.1]{Baker} claims that the irrationality measure of $\frac{\log p}{\log q}$ is bounded by a constant depending on $p, q$. Thus our assertion follows.
\end{proof}

\section{Generalizations}
\label{sect:generalizations}

The method to prove the main result in Section~\ref{sec:methodofmixing} generalizes to semisimple groups. Let $F$ be a number field and $G$ a simply connected, almost $F$-simple group. Let $S$ be a finite set of places containing all archimedean places $v$ such that $G(F_v)$ is non-compact. We denote the ring of integers of $F$ by $\mathfrak o_{F}$ and the ring of $S$-integers of $F$ by $\mathfrak o_{F,S}$. We fix an integral model of $G$ so that $G(\mathfrak o_{F})$ makes sense. Denote the $S$-adic Lie group $\prod_{v \in S} G(F_v)$ by $G_S$. We embed $G(\mathfrak o_{F, S})$ into $G_S$ diagonally. Then $G(\mathfrak o_{F, S})$ is a lattice in $G_S$. Let $X$ be a smooth projective equivariant compactification of $G$ defined over $F$ and $\mathcal L = (L, \| \cdot \|)$ an adelically metrized big line bundle on $X$ with a global section $s$ whose support coincides with $X \setminus G$. We define local height functions $\mathsf H_v : G(F_v) \ra \bR_{>0}$ and the global height $\mathsf H : G_S \ra \bR_{>0}$ by
$$
\mathsf H_{\mathcal L, s, v}(P_v) = \| s(P_v) \|^{-1}, \quad \mathsf H_{\mathcal L, s}((P_v)_{v \in S}) = \prod_{v \in S} \mathsf H_{\mathcal L, s, v}(P_v).
$$
We suppose that for any archimedean place $v \in S$, the local height function $\mathsf H_{\mathcal L, s, v}$ is invariant under the action of a maximal compact subgroup $K_v$. It is always possible to choose a metrization to satisfy this property. It is also a property of height functions that for any non-archimedean place $v$, the local height $\mathsf H_{\mathcal L, s, v}$ is invariant under the action of a compact open subgroup $K_v$. We are interested in a counting function $N(T)$ of $G(\mathfrak o_{F,S})$ with respect to $\mathsf H_{\mathcal L, s}$,
$$
N(T) = \# \{ \gamma \in G(\mathfrak o_{F, S}) \mid \mathsf H_{\mathcal L, s}(\gamma) \leq T \}.
$$
When $X$ is a biequivariant compactification of $G$, this counting function has been studied in \cite{TBT11} and \cite{BO12}. However, the case of one-sided equivariant compactifications remained open. Our technique in Section~\ref{sec:methodofmixing} can solve this case.

The action of $G_S$ on $Y := G(\mathfrak o_{F,S}) \backslash G_S$ is mixing(\cite[Proposition 2.4]{BO12}). We define
$$
H_S = \prod_{v \in S} K_v.
$$
Then $G_S$ has the wavefront property in $H_S \backslash G_S$. Thus translates of $H_S$-orbits are equidistributed in $Y$(Theorem~\ref{theo:equidistribution}). Let $\mu_S$ be a haar measure on $G_S$ and $\mu_Y$ an invariant measure on $Y$ such that $\mu_S = \mu_Y$ holds locally. We consider height balls 
$$
\mathsf B(T) = \{ g \in G_S) \mid \mathsf H_{\mathcal L, s}(g) \leq T \}.
$$
We denote the volume function of these height balls by $V(T)$. Now the discussion in Section~\ref{sec:methodofmixing} leads to the following theorem:
\begin{theo}
\label{theo:general}
Let $V^*(T) = V(T)/\mu_Y(Y)$. Then we have
$$
\frac{N(T)}{V^*(T)} \ra 1 \quad \text{as $T\ra +\infty$}.
$$
\end{theo}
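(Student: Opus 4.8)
The plan is to run, essentially verbatim, the argument of Section~\ref{sec:methodofmixing}, with $\SL_2(\bA_S)$ replaced by $G_S$, the lattice $\SL_2(\bZ_S)$ by $\Gamma := G(\mathfrak o_{F,S})$, and the subgroup $H$ by $H_S = \prod_{v\in S}K_v$. Three structural inputs are already in place. First, the action of $G_S$ on $Y=\Gamma\backslash G_S$ is mixing by \cite[Proposition 2.4]{BO12} (the Howe--Moore property holds for $S$-adic Lie groups attached to semisimple groups, and the arithmetic lattice $\Gamma$ is irreducible since $G$ is almost $F$-simple and simply connected). Second, $G_S$ has the wavefront property in $H_S\backslash G_S$: each maximal compact (resp.\ compact open) factor $K_v$ gives a wavefront pair, and products of wavefront pairs are wavefront by \cite[Proposition 3.5]{BO12}. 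Third, since $H_S$ is compact, $\Gamma_{H_S}:=\Gamma\cap H_S$ is a finite group, hence a lattice in $H_S$, so by Theorem~\ref{theo:equidistribution} the translates of $\Gamma_{H_S}\backslash H_S$ become equidistributed in $Y$ as $g\to\infty$ in $H_S\backslash G_S$. Finally, by construction $\mathsf H_{\mathcal L,s}$ is $H_S$-invariant.

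Next I would establish the exact analogue of Proposition~\ref{prop:weakconvergence}: for any sequence of non-negative, compactly supported, $H_S$-invariant integrable functions $\varphi_n$ on $G_S$ with $\max_n\|\varphi_n\|_\infty<\infty$ and $\int_{G_S}\varphi_n\,\mathrm d\mu_S\to\infty$, the functions $F_{\varphi_n}(\Gamma g):=\sum_{\gamma\in\Gamma}\varphi_n(\gamma g)$ on $Y$, divided by $I_n:=\int_{G_S}\varphi_n\,\mathrm d\mu_S/\mu_Y(Y)$, converge weakly to $1$. The proof is the same unfolding computation: for $\alpha\in C_c(Y)$ one writes $\int_Y F_{\varphi_n}\alpha\,\mathrm d\mu_Y=\int_{G_S}\varphi_n\alpha\,\mathrm d\mu_S$, factors $\mu_S=\mu_{H_S}\,\mu_{H_S\backslash G_S}$ with $\mu_{H_S}(H_S)=1$, obtains $\#\Gamma_{H_S}\int_{H_S\backslash G_S}\varphi_n(H_S g)\int_{\Gamma_{H_S}\backslash H_S}\alpha(\Gamma_{H_S}hg)\,\mathrm d\mu_{H_S}(h)\,\mathrm d\mu_{H_S\backslash G_S}(H_S g)$, applies the equidistribution of $\Gamma_{H_S}\backslash H_S g$ to the inner integral, and then invokes the dominated convergence theorem using the uniform bound on $\varphi_n$. (The sequence $\{\chi_{\mathsf B(T_n)}\}$ is an admissible choice because $\mathsf B(T)$ is $H_S$-invariant on the right by $H_S$-invariance of $\mathsf H_{\mathcal L,s}$.)

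The genuinely delicate point, and the one I expect to be the main obstacle, is upgrading this weak convergence to pointwise convergence for $\chi_{\mathsf B(T)}$, which requires the height balls $\mathsf B(T)$ to be \emph{well-rounded} in the sense of Proposition~\ref{prop:wellrounded}; equivalently, one needs $V(T+\delta)/V(T)\to 1$ and $V(T)/V(T-\delta)\to 1$ as $\delta\to 0$, uniformly for $T\gg 1$. This does not come for free in the general setting: it demands an asymptotic expansion of $V(T)$. Since $\mathcal L$ is big and $X$ is a smooth equivariant compactification, such an expansion $V(T)\sim c\,T^{a(L,\omega)}(\log T)^{b-1}$ is supplied by the machinery of \cite{volume} applied to $\mathsf Z(s)=\prod_{v\in S}\mathsf Z_v(s)$: holomorphy for $\Re(s)>a(L,\omega)$ and meromorphic continuation past that line with pole order governed by the analytic Clemens complex (Theorem~\ref{theo:heightzeta}), combined with the Tauberian theorems of \cite[Appendix~A]{volume}---Theorem~\ref{theo:tauberian} when $S$ consists of archimedean places, and \cite[Theorem~A.7]{volume} when $S$ contains non-archimedean places, using that $\log p/\log q$ is not Liouville for distinct primes (\cite{Baker}). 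Granting well-roundedness, one concludes exactly as in the proof of Theorem~\ref{theo:main_mixing}: fix $\epsilon>0$ and a symmetric neighborhood $U$ of $e$ realizing the bounds, set $\varphi_T^{+}(g)=\sup_{u\in U}\chi_{\mathsf B(T)}(gu^{-1})$ and $\varphi_T^{-}(g)=\inf_{u\in U}\chi_{\mathsf B(T)}(gu^{-1})$, use the sandwich $F_T^{-}(gu)\le F_T(g)\le F_T^{+}(gu)$ together with a non-negative $\alpha\in C_c(Y)$ of total mass $1$ supported in $U$, and apply the weak convergence of the previous paragraph to $\varphi_T^{\pm}$ to obtain $\tfrac{1-\epsilon}{1+\epsilon}\le F_T(e)/V^*(T)\le \tfrac{1+\epsilon}{1-\epsilon}$ for $T\gg 1$. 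Since $F_T(e)=\sum_{\gamma\in\Gamma}\chi_{\mathsf B(T)}(\gamma)=\#\{\gamma\in G(\mathfrak o_{F,S}):\mathsf H_{\mathcal L,s}(\gamma)\le T\}=N(T)$, letting $\epsilon\to 0$ yields $N(T)/V^*(T)\to 1$, as claimed.
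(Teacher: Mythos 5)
Your proposal is correct and follows exactly the route the paper takes: the paper's entire proof of this theorem is the single assertion that ``the discussion in Section~\ref{sec:methodofmixing} leads to the following theorem,'' and your argument is precisely that discussion transplanted to $G_S$, $\Gamma = G(\mathfrak o_{F,S})$, and $H_S = \prod_{v\in S} K_v$ (mixing, wavefront property, equidistribution of $H_S$-orbits, the unfolding/weak-convergence step, and the $\varphi_T^{\pm}$ sandwich). You are also right to single out well-roundedness of $\mathsf B(T)$ as the one step that does not transfer formally --- in Section~\ref{sec:methodofmixing} it is deduced from the volume asymptotics of Corollary~\ref{coro:precise}, which is specific to binary forms --- and your plan to re-derive it from the Chambert-Loir--Tschinkel volume asymptotics for a general equivariant compactification supplies exactly the ingredient the paper leaves implicit.
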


\bibliographystyle{alpha}
\bibliography{binaryforms}

\end{document}